 \DeclareFontFamily{OT1}{rsfs}{}
\DeclareFontShape{OT1}{rsfs}{n}{it}{<-> rsfs10}{}
\DeclareMathAlphabet{\mathscr}{OT1}{rsfs}{n}{it}
\DeclareMathOperator{\Hom}{Hom}
\newcommand{\GSp}{\mathrm{GSp}}
\newcommand{\lgp}{\mathcal{L}}
\newcommand{\Sur}{\mathrm{Sur}}
\DeclareMathOperator{\GL}{GL}
\DeclareMathOperator{\Sp}{Sp}
\newcommand{\baseR}{R}
\DeclareMathOperator{\Cl}{Cl}
\newcommand{\Mod}{\mathrm{Mod}}
\newcommand{\Rbar}{\bar{R}}
\newcommand{\Aone}{A(R)}
\newcommand{\Atwo}{A(R) +   \deg(U)}
\DeclareMathOperator{\Spec}{Spec}
\DeclareMathOperator{\Gal}{Gal}
\DeclareMathOperator{\Frob}{Frob}
\newcommand{\Hn}{\mathsf{Hn}}
\newcommand{\PHn}{\mathsf{PHn}}
\newcommand{\SConf}{\mathsf{Conf'}}
\newcommand{\SPConf}{\mathsf{PConf}}
\newcommand{\AConf}{\mathsf{Conf}}
\newcommand{\PreHn}{\mathsf{H}}
\DeclareMathOperator{\Conf}{Conf}
\DeclareMathOperator{\tConf}{\widetilde{Conf}}
\DeclareMathOperator{\Hur}{Hur}
\DeclareMathOperator{\CHur}{CHur}
\DeclareMathOperator{\Tor}{Tor}
\DeclareMathOperator{\Aut}{Aut}
\DeclareMathOperator{\Aff}{Aff}
\newcommand{\field}[1]{\mathbb{#1}}
\newcommand{\GG}{\mathcal{G}}
\newcommand{\G}{\mathbb{G}}
\newcommand{\Q}{\field{Q}}
\newcommand{\A}{\field{A}}
\newcommand{\Z}{\field{Z}}
\newcommand{\F}{\field{F}}
\newcommand{\modmod}{/\!\!/}
\newcommand{\Quads}{\mathfrak{S}}
\newcommand{\R}{\field{R}}
\newcommand{\C}{\field{C}}
\renewcommand{\P}{\field{P}}
\newcommand{\ra}{\rightarrow}
\newcommand{\OO}{\mathcal{O}}
\newcommand{\Tr}{\mbox{\textnormal Tr}}
\newcommand{\arc}{\mathcal{A}}
\newcommand{\tensor} {\otimes}
\newcommand{\bs}{\backslash}
\newcommand{\MM}{\mathcal{M}}
\newcommand{\Fqbar}{\overline{\mathbb{F}}_q}
\newcommand{\SHom}{\mathrm{Sur}}
\newcommand{\set}[1]{\{#1\}}
\newcommand{\Koszul}{\mathcal{K}}
\newcommand{\beq}{\begin{displaymath}}
\newcommand{\eeq}{\end{displaymath}}
\newcommand{\beqn}{\begin{equation}}
\newcommand{\eeqn}{\end{equation}}
\newcommand{\Diff}{\mathrm{Diff}}
\newcommand{\Jac}{\mathrm{Jac}}
\numberwithin{equation}{subsection}
\theoremstyle{plain}
\newtheorem{thm}[subsection]{Theorem}
\newtheorem{prop}[subsection]{Proposition}
\newtheorem{cor}[subsection]{Corollary}
\newtheorem{conj}[subsection]{Conjecture}
\newtheorem*{conj*}{Conjecture}
\newtheorem{lem}[subsection]{Lemma}
\newtheorem*{intro}{Theorem}
\theoremstyle{definition}
\newtheorem{defn}[subsection]{Definition}
\newtheorem*{defn*}{Definition}
\newtheorem*{prop*}{Proposition}
\newtheorem*{cor*}{Corollary}
\newtheorem*{lem*}{Lemma}
\theoremstyle{remark}
\newtheorem{rem}[subsection]{Remark}
\newtheorem*{rem*}{Remark}
\title[Homological stability and Cohen-Lenstra over function fields]{Homological stability for Hurwitz spaces and the Cohen-Lenstra conjecture over function fields}
\author{Jordan S. Ellenberg, Akshay Venkatesh and Craig Westerland}
 \dedicatory{Dedicated to Barry Mazur on the occasion of his 75th birthday}
\begin{document}

\maketitle

\begin{abstract}  
We prove a homological stabilization theorem for Hurwitz spaces:  moduli spaces of branched covers of the complex projective line.
This has the following arithmetic consequence:
let $\ell > 2$ be prime
and $A$ a finite abelian $\ell$-group. Then there exists $Q = Q(A)$ such that,
for $q$ greater than $Q$, a positive fraction of quadratic extensions of 
   $\mathbb{F}_q(t)$ have the $\ell$-part of their class group isomorphic to $A$.  
   \end{abstract}  
   \tableofcontents
\section{Introduction}

\subsection{The Cohen-Lenstra heuristics.}
Experimental evidence shows very clearly that class groups of number fields display interesting biases in their distribution.  For instance, class groups of quadratic imaginary fields are much more likely to contain a factor $\Z/9\Z$ than a factor $\Z/3\Z \times \Z/3\Z$.  Motivated by this and other examples,  Cohen and Lenstra conjectured in ~\cite{cohe:cohenlenstra}  that
a particular finite abelian group should occur as the class group of a quadratic imaginary field with frequency inversely proportional to its number of automorphisms. 

This leads, for instance, to the prediction that the probability that a quadratic imaginary field has class number indivisible by $3$ is
$$1 -\prod (1-3^{-i}) \sim 0.440 \dots$$

The initial motivation for the present paper was to study the Cohen-Lenstra heuristics
over function fields, i.e., finite extensions of $\mathbb{F}_q(t)$.   The result
quoted in the abstract can be stated more quantitatively as follows:

\begin{thm} \label{thm:CLweak}
Let $\ell > 2$ be prime
and $A$ a finite abelian $\ell$-group. 
Write $\delta^+$ (resp. $\delta^-$) for the upper density (resp. lower density) 
of imaginary\footnote{By ``imaginary" we mean ``ramified at $\infty$.''} quadratic extensions of $\mathbb{F}_q(t)$ for which the $\ell$-part of the class group is isomorphic to $A$.   Then $\delta^+(q)$ and $\delta^-(q)$ converge, as $q \rightarrow \infty$ with $q \neq 1 \pmod \ell$,
 to $\frac{ \prod_{i \geq 1} (1-\ell^{-i}) } {|\Aut(A)|}$. 
 
 \end{thm}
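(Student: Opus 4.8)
The plan is to translate the arithmetic statement about class groups of quadratic function fields into a statement about point-counts of Hurwitz spaces over $\F_q$, then use the homological stability theorem (together with the Grothendieck–Lefschetz trace formula) to control those point-counts as $q \to \infty$.

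First I would recall the dictionary between squarefree polynomials $f \in \F_q[t]$ of degree $n$ — equivalently, imaginary quadratic extensions $\F_q(t)(\sqrt{f})$ with discriminant of degree $n$ (imaginary meaning ramified at $\infty$, so $n$ odd, say) — and $\F_q$-points of a configuration/Hurwitz space. The $\ell$-part of the class group of such a field is, by class field theory, dual to the group of everywhere-unramified $\Z/\ell$-extensions, and more refinedly the isomorphism type of the $\ell$-part is recorded by the set of surjections from the geometric fundamental group of the associated double cover onto abelian $\ell$-groups. The key identity I would establish is: the number of imaginary quadratic extensions of $\F_q(t)$ with bounded discriminant degree and $\ell$-class group isomorphic to $A$ is, up to normalization, a weighted sum of $\#\Hur_{G,n}(\F_q)$ over the relevant Hurwitz spaces, where $G$ is the generalized dihedral group $A \rtimes \Z/2$ (or, more precisely, one works with the groups $\Z/\ell \rtimes \Z/2 = D_\ell$ and pushes forward, using the "sieve" that cuts out the condition "$\ell$-class group is exactly $A$" from the easier conditions "admits a surjection to $(\Z/\ell)^r$"). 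This is the step where one must be careful about the ramification type (the branch points correspond to the prime divisors of $f$, all with local monodromy the involution), and about the automorphism factors $|\Aut(A)|$ — these are exactly the $G$-conjugation on the Hurwitz space that one must quotient by or keep track of.

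Next, with this dictionary in hand, I would apply Grothendieck–Lefschetz: $\#\Hur_{G,n}(\F_q) = \sum_i (-1)^i \Tr(\Frob_q \mid H^i_c(\Hur_{G,n,\bar{\F}_q}, \Ql))$. The homological stability theorem proved in this paper says that $H^i(\Hur_{G,n}; \Ql)$ — and hence by Poincaré duality $H^{2d-i}_c$, $d = \dim \Hur_{G,n} = n$ — stabilizes: for $n$ large compared to $i$, $\dim H^i$ is independent of $n$, and moreover one needs to know the Frobenius eigenvalues on the stable cohomology. The expected (and what the stability plus an identification of the stable cohomology should give) is that the stable cohomology is "as small as possible" — concentrated so that the dominant contribution to the point-count is $q^n$ times the stable Euler-characteristic-type constant, which works out to $q^n \cdot \prod_{i\geq 1}(1-\ell^{-i})/|\Aut(A)|$ after dividing by the total number $\sim q^n$ of squarefree polynomials of degree $n$. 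The convergence "$q \to \infty$, $q \not\equiv 1 \bmod \ell$" enters because the relevant Hurwitz space, and the action of Frobenius on its (stable) cohomology, depends on whether $\F_q$ contains the $\ell$-th roots of unity; when $q \not\equiv 1 \bmod \ell$ the group $D_\ell$ acts on the branch configuration in the "non-split" way that makes the combinatorics of connected components, and the Frobenius action on $H^0$ of each component, come out right.

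The main obstacle — and the technical heart of the argument — is two-fold. First, one must pass from "each fixed $i$ stabilizes" to a statement about the full alternating sum uniformly in $n$: homological stability alone controls low-degree cohomology for large $n$, but the trace formula sums over all $i \leq 2n$, so one needs an auxiliary bound showing the high-degree (equivalently, low-degree compactly-supported) cohomology does not contribute, i.e. a bound of the form $\dim H^i_c(\Hur_{G,n}) \ll C^i$ with $C$ independent of $n$, or better a vanishing range, so that the tail is $o(q^n)$. Second, one must actually identify the stable cohomology well enough — at least its Euler characteristic and the top Frobenius weight — to extract the precise constant $\prod(1-\ell^{-i})/|\Aut A|$ rather than merely "a positive fraction"; this is where the combinatorial group theory of $D_\ell$ (computing the stable homology of the Hurwitz space braid-group-like object, e.g. via the ring structure on $\bigoplus_n H_*(\Hur_{G,n})$) does the real work, and it is why the weaker Theorem~\ref{thm:CLweak} with "converges to the Cohen–Lenstra constant" is harder than just "bounded below by a positive constant." I would organize the write-up so that the homological stability theorem is a black box, the point-count identity is a careful but elementary lemma, and the bulk of the remaining effort is the stable-homology computation plus the uniform tail bound.
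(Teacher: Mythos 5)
Your overall skeleton (translate to point-counts of Hurwitz schemes for $G = A \rtimes \Z/2$ with involution monodromy, apply Grothendieck--Lefschetz, use stability plus a uniform bound $\dim H^i \leq C\cdot D^i$ to kill the tail via Deligne's bounds) matches the paper. But your last paragraph misidentifies where the constant $\prod_{i\geq 1}(1-\ell^{-i})/|\Aut(A)|$ comes from, and this is a genuine gap. You assert that one must ``identify the stable cohomology well enough --- at least its Euler characteristic and the top Frobenius weight.'' The paper does no such computation (it is explicitly deferred to a sequel, and the stabilization maps are not even shown to be Frobenius-equivariant); all cohomology in positive degree is treated as an error term of size $O(q^{-1/2})$ per normalized point. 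Instead the precise constant is produced by the moment method: Proposition~\ref{pr:hypjac} identifies $\mathbf{X}_n(\F_q)$ with pairs $(L,\alpha)$, $\alpha:\Cl(\OO_L)\twoheadrightarrow A$, so the point-count computes the \emph{average number of surjections} onto $A$; Theorem~\ref{th:weakcl} shows this average is $1+O(B(A)/\sqrt q)$, and Corollary~\ref{cor:kluners} (a purely measure-theoretic statement: the Cohen--Lenstra distribution is quantitatively characterized by all surjection-moments being $1$) converts these approximate moments into the distributional statement, with $|\Aut(A)|$ emerging from that lemma rather than from any homology computation. Had your route been followed literally, you would be relying on an unavailable identification of stable cohomology.

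The one piece of cohomology that \emph{must} be pinned down exactly is $H^0$: one needs that $\bar{\mathbf{X}}_n$ has exactly one $\F_q$-rational connected component, so that the main term is exactly $q^n$. Your explanation of the hypothesis $q\not\equiv 1 \pmod\ell$ (``the non-split way the group acts on the branch configuration'') conflates the nonsplitting condition on $(G,c)$, which holds for all $q$, with the genuinely arithmetic input. What is actually used is the big monodromy theorem for the moduli of hyperelliptic curves (the geometric $\ell$-adic monodromy is all of $\Sp_{2g}(\Z_\ell)$, due to Yu, Achter--Pries, Hall), reducing the component count to Lemma~\ref{sp-la}: the surjections $V\to A$ fixed by some element of $\GSp_q(V)$ form a single $\Sp(V)$-orbit, and the proof of that lemma is exactly where invertibility of $q-1$ in $\Z_\ell$ enters (the eigenspace decomposition $V_1\oplus V_q\oplus W$). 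Your proposal contains no substitute for this ingredient. Finally, note that the stability theorem is topological, so one also needs the comparison between singular cohomology of $\Hur^c_{G,n}(\C)/G$ and \'etale cohomology of the Hurwitz scheme over $\bar\F_q$ (Propositions~\ref{pr:logshit} and \ref{pr:fqstability}, via log geometry); your write-up treats this as automatic, whereas it is a separate nontrivial step because the Hurwitz schemes are not proper.
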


This is a corollary to Proposition \ref{cor:kluners} and Theorem~\ref{th:weakcl}.  When $q = 1 \pmod \ell$, the method of proof still works; for any fixed positive $\ell$-valuation of $q-1$, the proof yields a distribution which differs from the Cohen-Lenstra distribution.  (This is related to Malle's recent observation in \cite{mall:clroots} that the Cohen-Lenstra heuristics require modification when extra roots of unity are present in the base field.)   The description of this distribution in many cases is carried out in the Ph.D. thesis of Garton~\cite{gartonthesis}.

In particular, for $q > Q_0(\ell)$, a positive fraction of imaginary quadratic extensions of $\mathbb{F}_q(t)$ have class number divisible by $\ell$, and a positive fraction have class number indivisible by $\ell$.  The infinitude of quadratic extensions of $\F_q(t)$ with class number divisible by $\ell$ was previously known (\cite{pace:pacellireu}, \cite{byeo:indivisibility}) as was the corresponding result for indivisibility by $\ell$~\cite{frie:divisibility}, but in both cases without a positive proportion.

In a different direction, corresponding questions are understood if one studies quadratic field extensions of $\mathbb{F}_q(t)$ with {\em fixed discriminant degree} and lets $q \rightarrow \infty$; see \cite{Achter, Washington}. 

The essential ingredient in the proof of Theorem~\ref{thm:CLweak} is, perhaps surprisingly, a theorem in topology -- more precisely, a result on stable homology of Hurwitz spaces.

\subsection{Hurwitz spaces}

A {\em Hurwitz space} is a moduli space for $G$-covers of the punctured complex plane, where $G$ is a finite group.  A thorough definition of these spaces will be given in \S\ref{se:definitions}; here we content ourselves with a brief description.

Hurwitz spaces have vanishing higher homotopy groups; 
each component has fundamental group isomorphic to a subgroup of the Artin braid group $B_n$.
The group $B_n$ is generated by elements $\sigma_i, 1 \leq i \leq n -1$, subject to the relations:
\begin{eqnarray} 
\begin{aligned}  \label{artin-braid}  \begin{cases}  \ \sigma_i \sigma_{i+1} \sigma_i  = \sigma_{i+1} \sigma_i \sigma_{i+1},  & 1 \leq i \leq n-2 \\  \  \sigma_i \sigma_j  = \sigma_j \sigma_i,  & |i-j| \geq 2. \end{cases} \end{aligned} \end{eqnarray} A Hurwitz space can also be seen as the space of complex points of a {\em Hurwitz scheme} parametrizing branched covers of $\A^1$.  Consequently, the study of Hurwitz spaces lies at the interface of algebraic geometry, topology, and combinatorial group theory.   
 
The majority of the present paper involves only the topology of the Hurwitz space, not its algebro-geometric aspects (e.g., its definition as a scheme over a ring of integers).  We therefore start with a purely topological definition of Hurwitz space, in which we replace the complex plane
by the unit disc  $D = \{ (x,y) \in \R^2: x^2+y^2 \leq 1\}.$
 
The Eilenberg-Maclane space $K(B_n, 1)$ has the homotopy type of the configuration space 
 $\Conf_n D$, which parameterizes configurations of $n$ (distinct, unlabeled) points in the interior of the disc.    Fixing a point $*$ on the boundary of $D$, we define the {\em Hurwitz space} $\Hur_{G,n}$ to be the covering space of $\Conf_n D$ 
whose fiber above $\{P_1, \dots, P_n\}$ is the set of homomorphisms
$$\pi_1(D - \{P_1, \dots, P_n\}, *) \rightarrow G.$$ 
If $c \subset G$ is a conjugacy class,
we denote by $\Hur^c_{G,n}$ the open and closed subspace of $\Hur_{G,n}$ whose fiber over a point of $\Conf_n D$ is the set of homomorphisms sending a loop around each $P_j \ (1 \leq j \leq n)$ to the conjugacy class $c$.  

 The homotopy type of $\Hur_{G,n}$
is then that of the Borel construction $EB_n \times_{B_n} G^n$, where $B_n$ acts on $G^n$ through the {\em braiding action:}
\begin{equation} \label{braiding-action-def} \sigma_j: (g_1, \dots, g_n) \mapsto (g_1, \dots, g_{j-2},  g_{j-1}, g_j g_{j+1} g_j^{-1}, g_{j}, g_{j+2} \dots).\end{equation} 
 Similarly, the homotopy type of $\Hur_{G,n}^c$ is that of $EB_n \times_{B_n} c^n$.

\subsection{Stability of homology} \label{introhur}
The Hurwitz space is evidently not connected; for example, the braid group action (\ref{braiding-action-def}) preserves the subset of $c^n$ consisting of $n$-tuples with {\em full monodromy}, i.e., those whose elements generate the whole group $G$.

Hurwitz proved in \cite{hurwitz} that, when $G = S_d$ (the symmetric group on $d$ letters) and  $c$ is the conjugacy class of transpositions, the orbits of the $B_n$-action on
$$ \{ \mathbf{g} \in c^n: \mbox{$\mathbf{g}$ has full monodromy}\}$$
are determined by the \emph{boundary monodromy}; for $\mathbf{g} = (g_1, \dots, g_n)$, this is the product $g_1\cdots g_n \in S_d$.
  In geometric terms: 
the subspace $\CHur^c_{G,n}$ of $\Hur^c_{G,n}$, comprising covers with full monodromy, decomposes into a union of subspaces indexed by the boundary monodromy of a cover, each of which is connected\footnote{For analogous results for groups other than $S_d$, we refer the reader to the appendix of \cite{friedvolklein}.} for all sufficiently large $n$. %
 This result was used by Severi to establish that the moduli space $\MM_g$ of curves of genus $g$ is connected. 

By contrast with Hurwitz's connectivity result -- which we may think of as a statement about homology in degree zero -- very little is known about the higher homology of $\Hur_{G,n}^{c}$ or $\CHur^c_{G,n}$.  The main theorem of this paper is the following stabilization result for the homology of Hurwitz spaces.   We write $b_p(X)$ for $\dim H_p(X,\Q)$, the $p$th Betti number of a space $X$.

\begin{intro}  Let $G$ be a finite group and $c \subset G$ a conjugacy class such that
\begin{itemize}
\item $c$ generates $G$;
\item (non-splitting) For any subgroup $H \leqslant G$, the intersection of $c$ with $H$ is either empty or a conjugacy class of $H$.
\end{itemize}
Then there exist integers $A,B,D>0$ such that $b_p(\Hur^c_{G,n}) = b_p(\Hur^c_{G,n+D})$ whenever $n \geq Ap + B$.
\end{intro}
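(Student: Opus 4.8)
The plan is to translate the topological stabilization statement into an algebraic statement about a graded module over a ring built from the conjugacy class $c$, and then prove a Noetherianity/finite-generation result for that module. Concretely, I would form the graded ring $R = \bigoplus_{n \geq 0} \Q[c^n]_{B_n}$ (the $B_n$-coinvariants, equivalently the $\Q$-vector space with basis the $B_n$-orbits on $c^n$), where multiplication is induced by concatenation of tuples $c^m \times c^n \to c^{m+n}$ followed by the inclusion $B_m \times B_n \hookrightarrow B_{m+n}$; the distinguished degree-one elements are the single-element tuples $(g)$ for $g \in c$, and because $c$ is a single conjugacy class these all become equal after passing to $B_n$-orbits, so there is a canonical central element $U \in R_1$. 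Stabilization of $H_0$ is then exactly the statement that multiplication by $U$ is eventually surjective, i.e.\ that $R$ is a finitely generated module over $\Q[U]$ in high degrees; Hurwitz's connectivity theorem is the prototype. The main theorem asks for the analogous statement on all the homology groups $H_p(\Hur^c_{G,n})$, so the first step is to assemble $\bigoplus_n H_*(\Hur^c_{G,n};\Q)$ into a graded module over $R$ (or over a suitable enrichment of $R$) using the concatenation maps $\Hur^c_{G,m} \times \Hur^c_{G,n} \to \Hur^c_{G,m+n}$, and to interpret the desired periodicity as: this module, as a $\Q[U]$-module, is finitely generated, so becomes free (hence periodic with period dividing the least common multiple of the degrees of the torsion generators, giving the integer $D$) in degrees above a linear-in-$p$ bound (giving $A$ and $B$).

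The core of the argument is therefore a Noetherianity statement, and the strategy I would use is to build an explicit finite free resolution, or at least a resolution with controlled degrees, of the trivial module over $R$, and then deduce finite generation of the homology module by a spectral sequence / Koszul-complex argument. The key construction is a simplicial or chain-level complex computing $H_*(\Hur^c_{G,n})$ that is compatible with the $R$-module structure: on the configuration space side one has the cellular chain complex of $\Conf_n D$ (or the normalized bar/arc complex), and lifting to the cover $\Hur^c_{G,n}$ gives a complex of $\Q[c^n]$-modules whose $B_n$-coinvariants compute the homology. The non-splitting hypothesis on $c$ is exactly what one needs to control the combinatorics: it guarantees that when one restricts attention to subtuples, the subgroups generated have $c$-intersection again a single conjugacy class, so the stabilizer structure of $B_n$-orbits on $c^n$ is uniform enough to run an induction. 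I would run a double complex whose rows are these cellular complexes and whose columns record the $R$-module (really $\Q[U]$-module) structure; finite generation of the $E_1$ page in each total degree, together with the linear bound on the homological degree in which cells appear, yields the linear stable range $n \geq Ap+B$.

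Concretely the steps, in order, are: (1) define $R$ and verify it is a finitely generated $\Q$-algebra with the central unit $U$, using the non-splitting condition to identify the generators (this is where one sees $R$ is Noetherian, or at least module-finite over a polynomial subring); (2) construct the graded $R$-module $M_* = \bigoplus_n H_*(\Hur^c_{G,n};\Q)$ via concatenation, and similarly a chain-level graded $R$-module refining it; (3) prove a vanishing/connectivity estimate — the crucial input generalizing Hurwitz — showing that a suitable "arc complex" or "semistandard" subcomplex is highly connected, with connectivity growing linearly in $n$; (4) feed this connectivity estimate into a spectral sequence comparing $M_*$ with the free $\Q[U]$-module it would be in the absence of torsion, concluding that $\dim_{\Q} M_{p,n}$ is eventually periodic in $n$ with the bound $n \geq Ap+B$; (5) extract $A,B,D$ from the resolution's degree bounds. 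The main obstacle is step (3): the generalization of Hurwitz's transitivity theorem to a genuine higher-connectivity statement for the relevant arc-type complex, with an explicitly \emph{linear} connectivity bound. Transitivity on $\pi_0$ is classical and comparatively soft, but controlling $\pi_1$ and all higher $\pi_i$ of the complex of "non-crossing matchings compatible with the $G$-labelling" requires a careful combinatorial model and an inductive retraction argument; getting the connectivity to grow linearly (rather than, say, logarithmically) in $n$, uniformly over all $p$, is the technical heart of the paper and is where the non-splitting hypothesis is used most essentially.
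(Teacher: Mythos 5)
Your overall architecture matches the paper's (ring of connected components, homology as a graded module over it, a highly connected complex whose equivariant spectral sequence has Koszul-type rows, induction on $p$), but two of your load-bearing claims are wrong or misplaced, and together they are the actual content of the proof. First, there is no ``canonical central element $U\in R_1$'': $R_1$ has basis $\{r_g\}_{g\in c}$ and these elements are neither equal (nothing identifies them --- $B_1$ is trivial, and in higher degree the braid relation only gives $r_gr_h=r_{ghg^{-1}}r_g$) nor individually central. The element that actually does the work is $U_D=\sum_{g\in c}r_g^{D|g|}$, whose degree is the period $D$ of the theorem, and the statement that $\deg(R/U_DR)<\infty$ is not a soft consequence of Hurwitz-type transitivity: it is Lemma~\ref{le:rurfinite}, proved by an induction over the filtration of $R$ by the subgroup $H=\langle g_i\rangle$ generated by a tuple, using the non-splitting hypothesis to compare multiplication by $r_{g_1}^{|g_1|}$ and $r_{g_2}^{|g_2|}$ for different $g_1,g_2\in c\cap H$, and using invertibility of $|G|$ in the coefficient field. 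Your description of $D$ as an lcm of degrees of torsion generators also cannot be right: with your degree-one $U$ the conclusion would be eventually \emph{constant} Betti numbers, which is not what holds (already $b_0$ is genuinely periodic in general).

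Second, you locate the technical heart in proving linear connectivity of a $G$-labelled arc-type complex and assert that non-splitting is used there; in fact the complex used is the bare arc complex, its $(n-2)$-connectivity is a $G$-independent theorem of Hatcher--Wahl quoted as a black box, and non-splitting plays no role in it. The genuine difficulty, which your plan elides under ``Noetherianity'' and ``a resolution with controlled degrees,'' is the homological algebra of the noncommutative ring $R$: one needs that the higher homology of the Koszul-type complex $\Koszul(M)$ is bounded, with linear-in-$q$ degree shifts, by its $H_0$ and $H_1$ (Theorem~\ref{Koszulbound}), which rests on degree bounds for $\Tor^R_i$ (Proposition~\ref{pr:cd1}, the statement that $R$ ``behaves as if it had cohomological dimension $1$'') and on the vanishing in high degree of $H_q(\Koszul(R))$ (Proposition~\ref{pr:K(r)}, via the null-homotopy of right multiplication by $r_g$). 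Mere finite generation of $\bigoplus_n H_p$ over a Noetherian ring does not yield a stable range linear in $p$; it is these quantitative Tor bounds, fed into the spectral sequence of Proposition~\ref{hom-prop} and an induction on $p$ in which the rows are $\Koszul(M_p)$ for smaller $p$, that produce the constants $A,B$. Without identifying and proving these two inputs --- the correct central element with $\deg(R/UR)<\infty$, and the control of $\Koszul$-homology by $H_0,H_1$ --- the proposal does not close.
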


This theorem is proved as Theorem~\ref{th:stability} below, with constants $A,B,$ and $D$ which are explicitly computable in terms of the combinatorics of $G$ and $c$. It is the key input
in the proof of Theorem \ref{thm:CLweak}.  

We remark that not even the case $p=0$ is wholly obvious.  Indeed, it is false without the ``non-splitting'' condition -- for instance, if $G = S_4$ and $c$ is the conjugacy class of transpositions, $b_0(\Hur^c_{G,n})$ has rank at least $n+1$ coming from the components corresponding to $\mathbf{g} =(12)^i(34)^{n-i}$ for $i=0, \dots, n$.  Indeed,  the nonsplitting enters  into our argument
{\em only} to guarantee the validity of Lemma \ref{le:rurfinite}, whose concern is precisely the stability of the set of connected components, i.e. the $p=0$ case of homological stability.  When $(G,c)$ is {\em not} nonsplitting, the number of components
of $\Hur^c_{G,n}$ always grows without bound. 

Unfortunately, the non-splitting condition is very strong -- for instance, it is not satisfied for the case considered by Hurwitz ($G = S_d$ and $c$ the transpositions in $S_d$) unless $d=3$.  Fortunately, it is satisfied in the cases pertinent to the Cohen-Lenstra heuristics. 

In a sense, the fact that the spaces $\Hur^c_{G,n}$ are in general disconnected is one of the central difficulties that is overcome in this paper. It also marks a difference between our result and some (but not all) other results about stable homology, which we recall in section \S \ref{context} below.

Given this theorem, it is natural to ask whether the stable homology of Hurwitz spaces can be described explicitly.  Even the case $p=0$ (the description of the connected components of $\Hur_{G,n}^c$ for $n \gg 1$) is not obvious; the answer is governed by an argument of Conway, Parker, Fried, and V\"{o}lklein~\cite{friedvolklein}, the ideas of which we make crucial use of in this paper. 

In a sequel to the present paper, we will discuss the stable homology of Hurwitz spaces for $p > 0$.    As an example of the kind of results we expect, we propose the following conjectural generalization of Hurwitz's theorem to higher homology:

\begin{conj}
Suppose $G$ is a symmetric group on more than two letters, and $c$ the conjugacy class of transpositions.  Then for any 
$i \geq 0$, the map 
$$\CHur^c_{G,n} \ra \Conf_n D,$$
when restricted to a single component of the domain, induces an isomorphism of rational homology groups in degree $i$  for sufficiently large $n$.  
\label{co:vanishing}
\end{conj}

  (Recall, moreover, that so long as $n \geq 2$, the rational homology groups $H_j$ of $\Conf_n(D)$
vanish for $j > 1$ and are one-dimensional for $j=0, 1$).  This conjecture is motivated by -- and  implies a form of -- Malle's conjecture over function fields, which is to say that both the upper limit $\delta^+$ and the lower limit $\delta^-$ in Theorem~\ref{thm:CLweak} are equal to $1$.

\subsection{Some context} \label{context}

There is already a large  body of work in topology concerning homology stabilization
for certain ``geometrically natural'' sequences of manifolds with increasing dimensions. Examples include:

\begin{enumerate}
\item The configuration space $\Conf_n$ of $n$ points in the plane \cite{Cohen,Arnold}; 
\item The moduli space $\mathcal{M}_g$ of smooth projective curves of genus $g$ \cite{Harer} (more precisely, the moduli stack; its homology is taken to be orbifold homology) 
\item Classifying spaces of arithmetic groups, e.g., the space $\mathrm{BSL}(n, \mathbb{Z})$ \cite{Quillen, Borel}; 
\item \label{Segalexample} The space of holomorphic mappings $\mathrm{Maps}^d(\Sigma, X)$ of degree $d$ from a Riemann surface $\Sigma$ to a  suitable projective variety $X$ \cite{Segal, Milgram}.
\end{enumerate}

The Hurwitz spaces $\Hur_{G,n}$  have features in common with all of these examples.  On the one hand, their individual components are Eilenberg-Maclane spaces of type $K(\pi, 1)$, as are the first three examples --
in such cases, homological stability reduces to a question about group homology, for which
there are standard techniques (see \S \ref{subsec:proofhs}).  %

On the other hand,  we may also see $\Hur_{G,n}$ as parameterizing maps from a certain {\em orbifold} --  namely, a sphere on which $n$ points have finite cyclic inertia group -- to the classifying space $BG$, thereby relating it to the fourth example.\footnote{
We learned this point of view from Abramovich, Corti, and Vistoli~\cite{acv}, who use it to define an algebraic compactification of Hurwitz space as a space of {\em stable} maps from orbifolds to $BG$.}

The results of type 4 in the existing literature require the hypothesis that $X$ is simply connected.  The classifying space $BG$ is, of course, not simply connected; this has the effect that the spaces $\Hur_{G,n}$ we consider are typically not connected.  This feature turns out to be the source of all the technical difficulty in our paper.  %

\subsection{The proof of homological stability.}  \label{subsec:proofhs}

Our method to prove homological stability of Hurwitz spaces is based on the following (by now, standard) setup: 

Suppose that we are
given a sequence $G_1 \subset G_2 \subset \dots $ of
groups,  and, for each $n$, a highly connected $G_n$-simplicial complex $X_n$, such that the stabilizer of an $i$-simplex in $X_n$ is precisely $G_{n-i-1}$.
{\em Then the inclusions $G_n \rightarrow G_{n+1}$
tend to induce group homology isomorphisms.} We refer to a paper of Hatcher and Wahl \cite{HatcherWahl} for precise statements of this type.%

In our context, the pertinent complex is related to the work of 
Harer~\cite{Harer} on the homology of the moduli space of curves.

Throughout the method, however, the fact that the spaces $\Hur_{G,n}$ need not be connected
proves a difficulty. To handle this, we equip all the higher homology
groups with structures of module over the graded ring $R$ formed from the connected components of the Hurwitz spaces. We are then able to reduce all the difficulties to {\em purely homological} questions about $R$,  which are settled in \S \ref{Rhomology}. 
 
For the arithmetic applications, it is not sufficient to prove homological stability
of Hurwitz spaces; we need the a priori stronger statement of homological stability for Hurwitz schemes, moduli schemes over $\Spec \Z[\frac{1}{|G|}]$ whose complex points are isomorphic to $\Hur_{G,n}^c$.  This requires comparing the cohomology of generic and special fibers of the Hurwitz scheme, which is carried out in \S \ref{log}.

\subsection{Analytic number theory over function fields} \label{subsec:Analff} 

Many questions in analytic number theory over $\Z$, when transposed
to a function field setting, become questions of the following form:
\begin{equation}  \mbox{ Understand the asymptotics of $|X_n(\F_q)|$, as $n \rightarrow \infty$, } \end{equation}
where $X_n$ is an algebraic variety over $\F_q$ of dimension growing with $n$.  
For example, our analysis of the Cohen-Lenstra heuristics is based on 
the study of this question for $X_n$ a Hurwitz scheme. 
 We discuss some other examples in section~\ref{ss:stableexamples}.

The philosophy driving this paper can be summed up in the following slogan:
\begin{quote}
The quantity $|X_n(\F_q)| q^{-\dim X_n}$ should be expected to approach a limit as $n \ra \infty$ precisely when the varieties $X_n$ have stable homology.
\end{quote}
Of course, one can construct a sequence of varieties $\set{X_n}$ so that $|X_n(\F_q)| q^{-\dim X_n}$ approaches a limit but the homology of $X_n$ is not stable; the slogan is meant to apply just when $X_n$ is a ``natural" sequence of moduli spaces.

We now explain how one direction of the above slogan can be demonstrated in practice.  The Grothendieck-Lefschetz fixed point formula expresses $|X_n(\F_q)|$ 
in terms of the action of the Frobenius upon the compactly supported ({\'e}tale) cohomology of $X_n$:
\begin{equation} \label{Gro-Lef} |X_n(\F_q)| = \sum_{j} (-1)^j
\left( \mbox{trace of Frobenius acting on $H^j_{\mathrm{c}, et} (X_n  \times_{\F_q} \overline{\F_q}, \Q_{\ell})$}  \right)\end{equation}
For example, if $X_n=\mathbf{P}^n$ with $|X_n(\F_q)| = q^n + q^{n-1} + \dots + 1$ 
the term $q^j$ arises from $H^{2j}_{\mathrm{c}, et}$ on the right-hand side.  More generally, 
one expects that the dominant terms arise from the compactly supported cohomology in high degree, or, 
what is the same if $X_n$ is smooth, the usual (co)homology in low degree.  
This leads naturally to asking for some sense in which the low-degree (co)homology of $X_n$ is ``controlled.'' 

 For instance, suppose that the $X_n$ are smooth of dimension $n$ and geometrically irreducible for large $n$.
 Then the only nonvanishing terms on the right-hand sum of \eqref{Gro-Lef} occur in cohomological dimensions $j \leq 2n$, 
and the contribution of $j=2n$  is exactly $q^n$. 

 To bound the remaining terms, we suppose 
  that there exists a constant $C$ so that%
 \begin{equation} \label{mario} \dim H^i_{et}(X_n \times_{\F_q} \overline{\F_q}, \Q_{\ell})  \leq C^i, \ \ \mbox{ for all $n, i \geq 1$}.\end{equation}
The Deligne bounds \cite{Deligne} show that the eigenvalues of Frobenius on $H^{2n-i}_{\mathrm{c},et}(X_n \times_{\F_q} \overline{\F_q}, \Q_{\ell})$ are algebraic numbers all of whose complex eigenvalues are bounded above by $q^{n-i/2}$. Now, $\dim(H^{2n-i}_{\mathrm{c}, et}) \leq C^i$ 
  by \eqref{mario} and Poincar{\'e} duality, and so the trace of Frobenius on $H^{2n-i}_{\mathrm{c}, et}$
  is   bounded above by $q^{n} \cdot (C/\sqrt{q})^i$.  
Inserting this bound into the Lefschetz fixed point formula \eqref{Gro-Lef} 
to handle all terms with $j < 2n$,  we arrive at
\begin{equation} \label{lef-del} \left|  \frac{ \#X_n(\F_q)}{q^n}   -1   \right|   \leq \frac{1}{\sqrt{q}/C-1} \end{equation}
for all $q > C^2$.  In other words, $X_n$ has approximately $q^n$ points over $\F_q$, as one might naively guess.  (Indeed, some of the consequences of making this naive guess were discussed by the first and second authors in \cite{EV}, who, at the time, had no idea that the guess might under some circumstances be correct.)

How might one establish bounds of the form  \eqref{mario}? Suppose that we can establish the existence of an isomorphism $$H_i(X_n, \Q_{\ell}) \rightarrow H_i(X_{n+1},\Q_{\ell}),$$ for $i \leq n$.  (In fact, $i \leq An$ for any positive constant $A$ will be just as good for the type of application discussed in this paper.)   Here $H_i$ denotes the singular homology of the complex points of these varieties, equipped with the analytic topology.  One immediately obtains the bound
$\dim H_i(X_n, \Q_{\ell}) \leq \dim H_i (X_i, \Q_{\ell})$ for $i < n$.\footnote{If the isomorphisms arise from algebraic maps $X_n \ra X_{n+1}$ defined over $\F_q$, then one even has isomorphisms of etale cohomology groups compatible with Galois action.  We have not pursued this refinement in the present paper.}  In particular,  \eqref{mario}  would then follow from an upper bound of the form
 \begin{equation} \label{moon} \dim H^i_{et}(X_n \times_{\F_q} \overline{\F_q},  \Q_{\ell}) 
\leq C^n \end{equation}
for each $i$, which tends to be much easier: it can be checked given some a priori bound on the ``complexity'' of the variety $X_n$.    

So a theorem about homological stability can, in principle, be used to prove an asymptotic result in analytic number theory over function fields over finite fields.  We now present some examples in order to sketch the potential scope of this point of view.

\subsection{Stable homology and analytic number theory over function fields:  further examples}
\label{ss:stableexamples}

In this section we discuss some other problems that connect analytic number theory of function fields with the homology of a natural sequence of moduli spaces.

\begin{enumerate}
\item 
The number of squarefree integers in the interval $[X, 2X]$ is asymptotic to $\frac{X}{\zeta(2)}$. 
 Over the rational function field over $\F_q$, the corresponding question is: How many monic squarefree degree-$n$ polynomials are there in $\F_q[t]$?
 
Set $X_n = \Conf^n \A^1$, the configuration space of $n$ points on $\A^1$, or, equivalently, the space of monic squarefree polynomials of degree $n$.   In this case, one indeed has homological stability \cite{Arnold}:  the homology of $X_n$ with $\mathbb{Q}_{\ell}$-coefficients is nonvanishing only in degrees $0$ and $1$; a computation with the Lefschetz formula then yields 
\beq
|X_n(\F_q)| = q^n - q^{n-1} = \frac{q^n}{\zeta_{\A^1/\F_q}(2)}
\eeq
which is precisely analogous to the result in the number field case.

\item  A question with no obvious counterpart over a number field is: How does the number
of genus $g$ curves over $\F_q$ behave, as $g \rightarrow \infty$?

As already mentioned, Harer's theorem gives homological stability for $\MM_g$ (as orbifold) as $g \rightarrow \infty$.  But in this case, there is no bound of the form \eqref{moon}:  the Euler characteristic of $\MM_g$ grows superexponentially with $g$ and so, in particular, there is no bound on the Betti numbers in the unstable range analogous to Proposition~{\ref{Bettibound}.}  Thus, the homology stabilization does not enforce any regularity on $\frac{|\MM_g(\F_q)|}{q^{\dim \MM_g}}$, and it is not at all clear this ratio should be expected to approach a limit as $g \ra \infty$.     (See \cite{dejongkatz} for a discussion of this case, including the best known upper bounds for $|\MM_g(\F_q)|$.)

\item We expect the problem of counting points of bounded height on varieties over global fields to provide a very general example of the relation between stable homology and analytic number theory.   

Over $\F_q(t)$, this problem amounts to counting the number of $\F_q$-points on the space
of maps from $\P^1$ to an algebraic variety $X$; over $\C$, the homological stability
for such spaces is example 4 of \S \ref{context}. 

It has been observed by the first two authors \cite{LettertodeJong} that one can ``reverse'' the reasoning used in this paper, counting points over finite fields via the Hardy-Littlewood method and then applying the Lefschetz fixed point formula to obtain geometric information about $\mathrm{Hol}^d(\C\P^1, X)$.  In cases where the Hardy-Littlewood method does not apply, there is a notable similarity between the class of varieties $X$ such that $\mathrm{Maps}^d(S^2, X)$ is known to have stable homology, and those where the rational points of bounded height on $X/\Q$ is known to obey the asymptotic prediction of the Batyrev-Manin conjecture.

\end{enumerate}

\subsection{Acknowledgments}

The authors are very grateful for the assistance and advice offered by many mathematicians in the course of the present project:  among these we especially mention Jeff Achter, Manjul Bhargava, Henri Cohen, Ralph Cohen, Brian Conrad, Nathan Dunfield, Mike Fried, S{\o}ren Galatius, Ezra Getzler, Tom Graber, Richard Hain, Hendrik Lenstra, Martin Olsson, Ravi Vakil, Stefan Wewers, and Nathalie Wahl.  We are also grateful for the substantial efforts of several referees, whose comments substantially improved both the exposition and the argumentation of the paper. The first author was partially supported by NSF-CAREER Grant DMS-0448750 and a Sloan Research Fellowship; the second author was partially supported by a Packard fellowship, a Sloan Research Fellowship, and an NSF grant; the third author was partially supported by NSF grant DMS-0705428.

\subsection{Notation} \label{notn}

If a group $G$ acts on a topological space $X$, we will use the notation $X\modmod G$ for the Borel construction $X\modmod G := EG \times_G X$, where $EG$ is a contractible $G$-space with free $G$ action.

If $g$ is an element of a finite group $G$, we denote the order of $g$ by $|g|$.  If $g,h \in G$ we denote by $g^h$ the conjugate $h^{-1} g h$.

We will deal with graded modules $M = \bigoplus M_n$ over various graded rings. We will always understand the grading to be supported
on non-negative integers, i.e.  $M_n = 0$ for $n < 0$ for all our graded modules in this paper.

If $M = \oplus M_n$ is a graded module for a graded ring $R = \oplus R_n$, we write $\deg M$ to mean the maximal $n$ such that $M_n \neq 0$.  If there is no such $n$, we say $\deg M = \infty$.
  The notation $M[k]$ means ``$M$ shifted by $k$" -- in other words, $M[k]_n = M_{n-k}$ for all $n \geq k$.

If $r$ is a homogeneous element in $R_n$, we write $\deg(r) = n$.

\section{Definitions}
\label{se:definitions}

\subsection{Hurwitz spaces} \label{hur_sec}

We begin with a topological definition of Hurwitz spaces.  Their interpretation as moduli spaces of branched covers may not be immediate from this definition; we will return to that description afterwards.%

Let $D$ be a closed disc with a marked point $*$ on the boundary, and write $\Conf_n$ for the configuration space of $n$ unordered, distinct points in the interior of $D$.  Fix a basepoint $c_n = \{P_1, \ldots, P_n\}$ in $\Conf_n$, and recall also that the Artin braid group $B_n$ on $n$ strands is isomorphic both to the mapping class group of the punctured surface $\Sigma := D - \{P_1, \dots, P_n\}$,
and to the fundamental group of $\Conf_n$: 
$$ \pi_0 \Diff^+(\Sigma, \partial \Sigma)  \cong B_n \cong \pi_1(\Conf_n, c_n)$$
Referring to the presentation \eqref{artin-braid}, the first isomorphism identifies the braid $\sigma_j$ with the diffeomorphism that exchanges $P_j$ and $P_{j+1}$ by a half Dehn twist along a circle
 containing only these two punctures.
The second isomorphism carries $\sigma_j$ to a path in $\Conf_n$ that switches $P_j$ and $P_{j+1}$, while leaving immobile the other $P_i$.

For definiteness, we may take $D$ to be the closed disc centered at $0$ of radius $n+1$, take $P_j = j \in \mathbf{C}$, the diffeomorphism to be the half-Dehn twist around a a circle centered at $j+\frac{1}{2} \in \mathbf{C}$ of radius $3/4$, and the path switching $P_j$ and $P_{j+1}$ to be the path that rotates them both by a half-twist around a circle centered at $j+1/2$ and of radius $1/2$.

We recall that $\Conf_n$ is an Eilenberg-MacLane space\footnote{Indeed, consider the finite covering space of $\Conf_n$ that parameterizes $n$ {\em ordered} points; this space can be presented as an iterated fibration of punctured discs, and is thus aspherical.} $K(B_n, 1)$.
Let $\pi = \pi_1(\Sigma,  *)$.   Fix, for each $i$, an embedded loop $\gamma_i$ in $\Sigma$, based at $*$, and  winding once (counterclockwise) around the puncture $P_i$, and
not winding around any other puncture.   It is possible to do this in such a way that the $\gamma_i$ are not intersecting except at $*$. For definiteness, take $\gamma_i$ to be a straight path from $*$ to a point $P_i'$ very close to $P_i$, together with 
a small loop winding once around $P_i$ based at $P_i'$.

  The $\gamma_i$ freely generate $\pi$; thus we have specified an isomorphism between the free group $F_n$ on $n$ generators, and $\pi$.  Of course, different choices of $\gamma_i$ will yield differing isomorphisms.  We note, however, that any two choices of generators $\{\gamma_i\}$ and $\{\gamma_i'\}$ are related by a diffeomorphism\footnote{The map $h$ may be constructed by gluing together, for each $i$, diffeomorphisms from the punctured disk bounded by $\gamma_i$ to the one bounded by $\gamma_i'$, as well as the exterior of their unions.} $h: \Sigma \to \Sigma$ fixing $\partial \Sigma$.

Since $* \in \partial \Sigma$ is fixed by the action of the diffeomorphism group $\Diff^+(\Sigma, \partial \Sigma)$, there is a natural action of $B_n$ on $\pi$, and hence the set $\Hom(\pi, G)$ of homomorphisms from $\pi$ to any discrete group $G$.  Let $c \subseteq G$ be a conjugacy class or union thereof.  Write $\Hom^c(\pi, G)$ for the subset of homomorphisms $f: \pi \to G$ which carry each $\gamma_i$ into $c$.  

In principle, this subset is dependent upon our choice of generators $\gamma_i$.  Note, however, that any two loops $\gamma_i$, $\gamma_i'$ around $P_i$, of the type described above,  are conjugate. Thus we may reformulate the condition on $f$ without reference to the choice of $\{ \gamma_i \}$ to say that $f$ carries the free homotopy class of a loop around each puncture into $c$.  In this formulation, it is also apparent that this subset is invariant under the action of $B_n$, since any diffeomorphism of $\Sigma$ preserves the set of free homotopy classes of   loops around the punctures.

Finally, write $\Sur^c(\pi, G)$ for the subset of $\Hom^c(\pi, G)$ consisting of surjective homomorphisms (nonempty only if $c$ generates $G$),
and $\Sur(\pi, G)$ for the similarly defined subset of $\Hom(\pi, G)$.  These are evidently a $B_n$-invariant subset of $\Hom^c(\pi, G)$.

\begin{defn} \label{hur_defn}

Let $\tConf_n$ be the universal cover of $\Conf_n$, together with a fixed point $\tilde{c}_n$ above $c_n$; thus $B_n$ acts on $\tConf_n$ by deck transformations,
this action being uniquely specified by requiring the action of $b \in B_n$ on $\tilde{c}_n$ to coincide with the monodromy action of $B_n \simeq \pi_1(\Conf_n,c_n)$.   Define the \emph{Hurwitz spaces}
\begin{itemize}
\item $\Hur_{G, n} := \tConf_n \times_{B_n} \Hom(\pi, G)$, 
\item $\Hur_{G, n}^c := \tConf_n \times_{B_n} \Hom^c(\pi, G)$, and 
\item $\CHur_{G,n} = \tConf_n \times_{B_n} \Sur(\pi, G)$. 
\item $\CHur_{G, n}^c := \tConf_n \times_{B_n} \Sur^c(\pi, G)$.
\end{itemize}

\end{defn}

These are covering spaces of $\Conf_n$, finite sheeted when $G$ is finite.  Moreover, 
$$\left(  \mbox{fiber of $\Hur_{G,n} \rightarrow \Conf_n$ above $c_n$} \right) \cong \Hom(\pi, G) \cong G^n,$$
where the first map sends $  \tilde{c}_n  \times a \mapsto a \in \Hom(\pi, G)$, and the second map
uses the identification, described above, of $F_n$ with $\pi$. 
Similarly for the  other spaces: the fibers above $c_n$ are identified with $\Hom^c(\pi, G), \Sur(\pi, G)$ and $\Sur^c(\pi, G)$, respectively, which in turn are identified with  
the set of $n$-tuples of elements of $c \subseteq G$, the elements of $c^n$ that generate $G$, and the elements of $G^n$ that generate $G$ respectively. 

In all cases, with respect to these identifications,  the monodromy action of $\pi_1(\Conf_n, c_n) \simeq B_n$ on the fiber is identified with the braiding action  \eqref{braiding-action-def} of $B_n$ on $G^n$ (see, e.g., \cite{Artin}, equation (14)).

Note \label{Gactionpageref} also that $G$ acts on $\Hom(\pi, G)$ by conjugation in the target; the subsets $\Hom^c(\pi, G), \Sur(\pi, G),\Sur^c(\pi, G)$ are invariant under this action.  When given in terms of sets of $n$-tuples, the action is by termwise conjugation on the $n$-tuple.  Furthermore, this action commutes with the action of $B_n$ on the domain, and so yields an action of $G$ on all of the spaces  above.

Finally, we note that $\tConf_n$ is a contractible space with a free action of $B_n$, and so can regard $\Hur_{G, n}$ as the Borel construction $EB_n \times_{B_n} \Hom(\pi, G)$ (and similarly for the other spaces above).

\subsection{Interpretation as moduli spaces} \label{interps} 

A {\em marked $n$-branched $G$-cover of the disc} is a quintuple $(Y, p,\bullet, S, \alpha)$, where
\begin{itemize}
\item[-] $S \subset D$ is a set of $n$ distinct points in the interior of $D$; 
\item[-] $p: Y \ra D - S$ is a covering map; 

\item[-] $\alpha: G \ra \Aut(p)$ is a map inducing a simply transitive action of $G$ on each fiber;
\item[-] $\bullet$ is a point in the fiber of $p$ above $*$. 
\end{itemize}
Note that we do not restrict ourselves to {\em connected} covers $Y$.
We say two marked $n$-branched $G$-covers $Y$ and $Y'$ are isomorphic if there is a homeomorphism from $Y$ to $Y'$ over $D-S$, compatible with the remaining data.

Then we have bijections between the sets (a), (b), (c) described below:  
\begin{itemize}
\item[(a)]  points of $\Hur_{G,n}$ 
\item[(b)]   pairs $(S,f)$, 
 where $S \in \Conf_n$, and $f: \pi_1(D - S, *) \to G$ is a homomorphism
 \item[(c)] isomorphism classes of marked $n$-branched $G$-covers of $D$.  
 \end{itemize}
 
 For the bijection between (a) and (b),  regard
 elements of $\tConf_n$ above $S \in \Conf_n$ as a homotopy class of paths between $c_n$ and $S$, 
 in such a fashion that $\tilde{c}_n$ corresponds to the trivial path.  Such a path induces
 an isomorphism of $\pi=\pi_1(D-c_n, *)$ with $\pi_1(D-S,  *)$. Thus each point of $\tConf_n \times \Hom(\pi, G)$
 gives a pair $(S, \pi_1(D-S, *) \rightarrow G)$, and this descends to the desired bijection.
 
For the bijection between (b) and (c),  start with an $n$-branched $G$-cover, and identify the fiber above $*$ with $G$ through the map $g \mapsto g \bullet$; with respect to this identification, the action of $\pi_1(D-S,*)$ on $p^{-1}(*)$ is by right multiplication by $G$, and so defines a homomorphism $\pi_1(D-S,*) \rightarrow G$.

\begin{rem*} Hurwitz spaces appear in many places in the literature, and the definition admits many variants.  We emphasize that our Hurwitz spaces differ from many standard treatments in that we do not restrict our attention to connected $G$-covers (we reserve the notation $\CHur$ for such a Hurwitz space), and in that we select a marked point in the fiber over $*$.  This latter difference means that, by contrast with the notation in some of the literature, the points of our Hurwitz space with some fixed set of branch points $S \subset D$ are in bijection ``on the nose'' with the homomorphisms from the fundamental group of the punctured disc to $G$, not with the conjugacy classes of such homomorphisms.
\end{rem*}

The action of $G$ on $\Hur_{G,n}$ defined in the discussion following Definition~\ref{hur_defn} is given in these terms by moving the marked point, i.e., via the rule
\beq
 g(Y,p, \bullet, S, \alpha) = (Y,p, \alpha(g) \bullet, S, \alpha).
\eeq
Later we shall study the quotient of $\Hur_{G,n}$ by this $G$-action; this quotient space, which we denote $\Hur_{G,n}/G$, parametrizes $n$-branched $G$-covers without the specification of $\bullet$.

The subspace $\CHur_{G,n} \subseteq \Hur_{G,n}$ consists of the space of covers with full monodromy $G$ -- in other words, the covers corresponding to {\em surjective} homomorphisms $\pi_1(D-S, *) \ra G$.  The prepended ``C" is meant to recall that this space parametrizes {\em connected} $G$-covers of the disc.  The space $\CHur_{G,n}$ itself need not be connected in general (cf. \S \ref{introhur}).

\subsection{Combinatorial invariants.}
 $\Hur_{G,n}$ is usually disconnected, i.e. the action of $B_n$ on $\Hom(F_n,G)$ is typically not transitive.

We now describe some invariants of a cover $p$  which are constant on connected components of $\Hur_{G,n}$. 
By definition \ref{hur_defn}, it is equivalent to the combinatorial problem of specifying
a $B_n$-invariant function on $G^n$.

\begin{itemize}
\item The {\em global monodromy} of $p$ is the image of $\pi_1(D - S,*)$ in $G$.  In
combinatorial terms, this is the map $(g_1, \ldots, g_n) \rightarrow \langle g_1, \ldots, g_n\rangle,$
the subgroup of $G$ generated by the $g_i$. 
\item  The {\em boundary monodromy} of $p$ is the element of $G$ induced by a counterclockwise loop around $\partial D$. (More precisely, transport around such a loop
moves $\bullet$ to a point $g.\bullet$, for a unique $g \in G$.)

In combinatorial terms, this is the map $(g_1, \dots, g_n) \mapsto g_1 g_2 \ldots g_n$.
\item  For each $i$, the monodromy around a small loop encircling $P_i$ is an element of $G$, well-defined only up to conjugacy.  The resulting multiset of $n$ conjugacy classes of $G$ is called the {\em Nielsen class} of $p$. 

Combinatorially, the Nielsen class map associates to $(g_1, \ldots, g_n)$ the multiset obtained by replacing each $g_i$ with its conjugacy class.
\end{itemize}

Fixing the global monodromy, boundary monodromy, and Nielsen class of a cover specifies a subspace of $\Hur_{G,n}$; although it may be disconnected, there are no ``obvious'' invariants further separating connected components. 

For a conjugacy class $c \subseteq G$ we note that $\Hur_{G,n}^c$ is the subspace of $\Hur_{G,n}$ consisting of covers whose Nielsen class is $n$ copies of $c$.
Our main goal in the present paper is to study the homology groups $H_p(\Hur_{G,n}^c)$, especially their asymptotic behavior as $n$ grows with $G$ and $c$ held fixed.  It is also natural to consider the larger spaces where the monodromy is drawn not from a single conjugacy class $c$ but from a fixed union of conjugacy classes, or for that matter from the whole group.  We do not pursue this generalization in the present paper.

\begin{prop} \label{Bettibound}
$\Hur_{G,n}$ and $\Hur_{G,n}^c$ are both homotopy equivalent to CW complexes with at most $(2 |G|)^n$ cells. 
\end{prop}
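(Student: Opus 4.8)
The plan is to build an explicit CW structure on $\Conf_n D$ with few cells and then pull it back along the finite covering $\phi\colon \Hur_{G,n}\to\Conf_n D$ (resp. its restriction $\Hur^c_{G,n}\to\Conf_n D$), using that a $d$-sheeted cover of a CW complex with $N$ cells has a CW structure with $dN$ cells. Since the fiber of $\phi$ above a configuration is $\Hom(F_n,G)=G^n$, which has $|G|^n$ elements, and the fiber of the restricted map $\Hur^c_{G,n}\to\Conf_n D$ has at most $|G|^n$ elements as well, it suffices to exhibit a CW structure on $\Conf_n D$ with at most $2^n$ cells (or more honestly, with at most $(2|G|)^n/|G|^n = 2^n$ cells after the pullback bookkeeping; any bound of the form $K^n$ with $2^n$ being the target).

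First I would recall the standard cell decomposition of the unordered configuration space of $n$ points in the disc coming from the ``Fox--Neuwirth / Fuks'' stratification, or equivalently from thinking of a configuration of $n$ points as (the roots of) a monic squarefree complex polynomial of degree $n$, i.e. a point of the space of monic squarefree degree-$n$ polynomials. One well-known model: stratify $\Conf_n D$ by recording, for a configuration, the combinatorial data of how the $n$ points are distributed when sorted by real part (ties broken by imaginary part) — this is exactly the cell structure Fuks used to compute $H^*(\Conf_n \mathbb{R}^2)$, and the cells are indexed by compositions of $n$, of which there are $2^{n-1}$. This already gives $\Conf_n D$ the homotopy type of a CW complex with $2^{n-1}\le 2^n$ cells. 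Pulling back along $\phi$ multiplies the cell count by the number of sheets, which is $|\Hom(F_n,G)| = |G|^n$ for $\Hur_{G,n}$ and at most $|c|^n \le |G|^n$ for $\Hur^c_{G,n}$; in either case we get at most $2^{n-1}|G|^n \le (2|G|)^n$ cells, as claimed.

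The one technical point I would need to be careful about is that $\Conf_n D$ with this Fuks-type stratification is not literally a CW complex on the nose (the strata are not open cells glued along lower-dimensional skeleta in the naive way, because of the boundary behavior of the disc and the non-compactness of configuration space), so the honest statement is ``homotopy equivalent to a CW complex with at most $2^{n-1}$ cells'' — which is exactly what the proposition asserts (``homotopy equivalent to CW complexes with at most $(2|G|)^n$ cells''). To make this rigorous one can either invoke the general fact that the classifying space $K(B_n,1)\simeq \Conf_n D$ admits a finite model with the right number of cells (e.g. via the Salvetti complex of the braid arrangement, whose cells correspond to the faces of the type $A_{n-1}$ Coxeter arrangement, again $2^{n-1}$ of them), or directly observe that $\Hur_{G,n}$ is homotopy equivalent to $EB_n\times_{B_n}G^n$ and that $EB_n$ can be replaced by a contractible $B_n$-CW complex with $2^{n-1}$ cells in each $B_n$-orbit (the universal cover of the Salvetti complex), so the Borel construction is a CW complex with $2^{n-1}\cdot|G^n| = 2^{n-1}|G|^n$ cells; restricting the $G^n$-coordinate to $c^n$ gives the bound for $\Hur^c_{G,n}$.

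The main obstacle — such as it is — is purely one of choosing the cleanest off-the-shelf CW model and verifying the cell count $2^{n-1}$ rather than something merely exponential; there is no conceptual difficulty, since the stabilizer-chain / Salvetti-complex picture for the braid group is completely standard and the covering-space cell-count multiplies by the number of sheets. One should simply take whichever of the above models (Fuks stratification of the space of squarefree polynomials, or the Salvetti complex for $B_n$) is most convenient to cite, note the fiber size $\le |G|^n$, and conclude. The factor $2$ (rather than sharper) in the statement leaves ample slack, so no optimization is needed.
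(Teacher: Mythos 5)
Your proposal is correct and follows essentially the same route as the paper: the paper likewise reduces to showing that $\Conf_n$ is homotopy equivalent to a CW complex with at most $2^n$ cells (citing the Fuks cell decomposition) and then multiplies by the fiber size $\leq |G|^n$ of the covering. Your extra care about the Fuks stratification not being literally a CW structure, and the Salvetti/Borel-construction fix, only elaborates what the paper leaves to the citation.
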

\proof 
Since $\Hur_{G,n}$ and $\Hur_{G,n}^c$ are
both coverings of $\Conf_n$ with fibers of size $\leq |G|^n$,
 it suffices to check that $\Conf_n$  is homotopy equivalent
 to a CW complex with $\leq 2^n$ cells.
 For this see, e.g., \cite{Salvetti} or \cite{Charney-Davis}.   
\qed 

 Proposition~\ref{Bettibound} is critical for the arithmetic applications; in the Lefschetz trace formula it is this fact that allows us to neglect the contribution of cohomology classes in the unstable range.

\subsection{Gluing maps} \label{gluing} 

Arising from the natural inclusions $B_n \times B_m \rightarrow B_{n+m}$
and $G^n \times G^m \rightarrow G^{n+m}$, we obtain a map on Borel constructions 
$$(EB_n \times_{B_n} G^n) \times (EB_m \times_{B_m} G^m) \to EB_{n+m} \times_{B_{n+m}} G^{n+m}$$
which defines (up to homotopy) a gluing map: 
$$\Hur_{G,n} \times \Hur_{G,m} \longrightarrow \Hur_{G,n+m}.$$
This multiplicative structure is associative up to homotopy since both maps inducing it are; collectively, they make the union of the Hurwitz spaces into an $H$-space.  

Geometrically, these maps associate to a pair of branched covers $Y_1$, $Y_2$ of $D$ a new cover, $Y_3$.  Pick two standard disjoint, embedded loops $\gamma_1$, $\gamma_2$ in $D$ based at $*$; then the restriction of $Y_3$ to the interior of the region described by each $\gamma_i$ is isomorphic to $Y_i$.  On the complement of the loops, $Y_3$ is the trivial $G$-bundle extension.

Similarly, we have a multiplication
$$\Hur_{G,n}^{c} \times \Hur_{G,m}^{c} \longrightarrow \Hur_{G, n+m}^{c}.$$

We note that these gluing maps are equivariant for the action of $G$ (where $G$ acts on all three factors
in the fashion defined after Definition \ref{hur_defn}) when we take the model given by the Borel construction.

\section{The ring $R$ of connected components}  \label{Rdef}
 Let $k$ be a field of characteristic prime to $|G|$. 
Then the graded ring $$ R = \sum_n H_0(\Hur_{G,n}^c,k),$$
 inherits, from the multiplication  on Hurwitz spaces (\S \ref{gluing}), 
the structure of a non-commutative $k$-algebra; moreover, the higher homology of Hurwitz
spaces carries the structure of $R$-module.

\begin{defn} We say the pair $(G,c)$ has the {\em non-splitting property} if $c$ generates $G$ and, moreover, for every subgroup $H$ of $G$, the intersection $c \cap H$ is either empty or a conjugacy class of $H$.\end{defn}

Our main result in this section is Lemma~\ref{le:rurfinite}, which implies that {\em if $(G,c)$ has the non-splitting property, 
then there exists a central homogeneous element $U \in R$ so that the degree 
of $R/UR$ is finite. }

Before discussing $R$, we begin by giving the basic example of non-splitting pairs:

\begin{lem}  Let $G$ be a finite group whose order is $2s$, for $s$ odd.   Then there is a unique conjugacy class of involutions $c \subset G$; if $c$ generates $G$, then $(G,c)$ is non-splitting. \label{le:dihedralnon-splitting}
 \end{lem}

\begin{proof}  
The fact that all involutions are conjugate follows from conjugacy of $2$-Sylow subgroups; %
any subgroup $H$ of $G$ containing an involution has order $2s'$ for $s'$ odd, and 
 the non-splitting follows from the uniqueness assertion applied to $H$. 
\end{proof}

A group $G$ as in the Lemma is necessarily isomorphic to $G_0 \rtimes (\Z/2 \Z)$
for  some group $G_0$ of odd order.  In fact, these are the only cases of non-splitting pairs where $c$ is a involution.\footnote{This fact follows from Glauberman's $Z^*$ theorem, as Richard Lyons explained to us; the authors thank \texttt{mathoverflow.net} for providing a forum where we could ask about this and be provided with an authoritative reference.}  There are other non-splitting pairs:  for example, $G = A_4$ and $c$ one of the classes of $3$-cycles.

{\em For the remainder of this paper all theorems have as a hypothesis that $(G,c)$ has the non-splitting property.}

\subsection{Combinatorial description of $R$.}

\label{ss:combinatoricsr}

The graded ring $R$ has a very concrete description: Let $\tilde{S}$ be the set of tuples of elements from $c$ (of any nonnegative length), and let $S$ be the quotient of $\tilde{S}$ by the action of the braid group.  Then $S$ is a semigroup under the operation of concatenation, and $R$ is the semigroup algebra $k[S]$.   
We let $S_n = c^n / B_n$ be the subset of $S$ consisting of elements of degree $n$; 
for $s \in S$ (considered as an element of $R$) write $\partial s \in G$ for the boundary monodromy of $s$; if $s$ is represented by $(g_1, \dots g_n)$, then $\partial s = g_1 \cdots g_n$.

  $R$ is generated over $k$ by degree $1$ elements $\set{r_g}_{g \in c}$, subject to the relations
\begin{equation}
r_g r_h = r_{g h g^{-1}} r_g
\label{eq:relation}
\end{equation}

We occasionally denote $r_g$ by $r(g)$ if the group element in question is too typographically complicated to fit in a subscript. We note that we learned the idea of using the semigroup $S$ to study connected components of Hurwitz  spaces from the Appendix to \cite{friedvolklein}.

\begin{prop}  Let $g \in c$.   For sufficiently large $n$, every $n$-tuple $(g_1, \ldots, g_n)$ in $\tilde{S}$ whose elements generate $G$ is equivalent under the braid group to an $n$-tuple $(g, g'_2, \ldots, g'_n)$, where $g'_2, \ldots, g'_n$ generate $G$.
\label{pr:cp1}
\end{prop}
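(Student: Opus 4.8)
The plan is to use the combinatorial description of $R$ together with the non-splitting property and the Conway--Parker--Fried--V\"{o}lklein argument recalled in the introduction. Fix $g \in c$. The claim is purely about orbits of $B_n$ on $c^n$, so I work entirely with tuples $(g_1,\dots,g_n) \in \tilde S$ generating $G$. The first step is a local move: whenever a consecutive pair $(g_i, g_{i+1})$ appears with $g_{i+1}$ conjugate to $g$ via some element already available, I want to slide a copy of $g$ to the front. More precisely, the braid relation \eqref{eq:relation}, read in $\tilde S$, lets me replace $(h, h')$ by $(h'', h)$ whenever $h'' = h' $ and $h$ is a suitable conjugator, and more generally lets me cyclically reshuffle and conjugate entries within a block at the cost of moving a "pivot" element past them. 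The key point I would isolate as a lemma: if some entry $g_j$ of the tuple is \emph{conjugate to $g$ by an element of the subgroup generated by $g_1,\dots,g_{j-1}$}, then $(g_1,\dots,g_n)$ is braid-equivalent to a tuple beginning with $g$ whose remaining entries still generate $G$ (because conjugation and braiding do not change the generated subgroup, and dropping a front entry that is a conjugate of a later one does not shrink the group generated by the rest once the group is generated).

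Next I need to guarantee that such an entry exists once $n$ is large. Here is where the non-splitting property enters. Since $c$ generates $G$, the product $g_1 g_2 \cdots g_k$ runs through a growing sequence of subgroups $\langle g_1,\dots,g_k\rangle$ as $k$ increases, stabilizing at $G$ after at most $\log_2 |G|$ steps; fix $k_0$ with $\langle g_1, \dots, g_{k_0}\rangle = G$ for every full-monodromy tuple (this uses only that each new generator at least doubles the order until we reach $G$, so $k_0 \le \log_2|G|$ suffices). For $n > k_0$, the entry $g_{k_0+1} \in c$ lies in $G = \langle g_1,\dots,g_{k_0}\rangle$; by the non-splitting hypothesis, $c \cap G = c$ is a single conjugacy class of $G$ (trivially) — more usefully, I apply non-splitting to conclude that $g$ and $g_{k_0+1}$, both lying in $c$, are conjugate \emph{in $G$}, hence conjugate by an element of $\langle g_1,\dots,g_{k_0}\rangle$. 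That is exactly the hypothesis of the lemma from the previous paragraph, applied with $j = k_0+1$.

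So the argument is: (i) take $n$ large enough that the first $k_0 \le \log_2 |G|$ entries of any full-monodromy tuple already generate $G$; (ii) use non-splitting to see that the $(k_0+1)$-st entry is $G$-conjugate to $g$; (iii) braid that conjugated copy of $g$ to the very front using \eqref{eq:relation}, checking that the conjugating element can be realized by braid moves that only involve the first $k_0+1$ strands; (iv) observe that the tail $(g'_2,\dots,g'_n)$ of the resulting tuple still generates $G$, since the remaining entries include braid-conjugates of $g_1,\dots,g_{k_0}$ (possibly minus one redundant generator) together with everything after position $k_0+1$. I would state (i)--(ii) cleanly and let (iii) be the bookkeeping step. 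The main obstacle is step (iii)/(iv): making precise that moving a chosen element to the front via repeated application of the braid relation is always possible, and that one can do so without disturbing the full-monodromy property of the remaining $n-1$ entries. This is the familiar subtlety in these Hurwitz-space arguments — the braid move $r_g r_h = r_{ghg^{-1}} r_g$ conjugates the element it passes, so one must track conjugations carefully — but it is exactly the kind of manipulation carried out in the appendix to \cite{friedvolklein}, and I would either cite that or reproduce the short induction on the number of strands to be crossed.
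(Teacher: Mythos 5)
Your overall strategy (locate an entry that can be conjugated to $g$ by elements coming from the rest of the tuple, then braid it to the front) is the right general shape, but two of your steps are genuinely false, and they sit exactly where the content of the proposition lies. Step (i) is wrong: the subgroups $\langle g_1,\dots,g_k\rangle$ are nondecreasing in $k$ and can jump at most $\log_2|G|$ times, but nothing forces the jumps to occur among the first $\log_2|G|$ entries, since consecutive entries may repeat or lie in the subgroup already generated. For example the full-monodromy tuple $(h,h,\dots,h,g_{n-1},g_n)$ has $\langle g_1,\dots,g_k\rangle=\langle h\rangle$ for all $k\le n-2$, so there is no uniform $k_0$ with $\langle g_1,\dots,g_{k_0}\rangle=G$, and step (ii), which conjugates $g_{k_0+1}$ to $g$ inside the prefix subgroup, collapses with it. Second, the lemma you isolate in your first paragraph does not have the conclusion you state: knowing that $g_j$ is conjugate to $g$ by an element of $\langle g_1,\dots,g_{j-1}\rangle$ says nothing about whether the entries other than $g_j$ generate $G$. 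For $G=S_3$, $g=(12)$ and the tuple $((12),(13),(13),\dots,(13))$, the hypothesis holds with $j=1$ and trivial conjugator, yet the complementary entries generate a group of order $2$. The condition you actually need is that the entries other than $g_j$ generate $G$; the correct pigeonhole for large $n$ is that once $n>|c|$ some element of $c$ occurs twice, and deleting one copy of a repeated entry does not shrink the generated subgroup, so such a $j$ exists.

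Even with the right $j$, your step (iii) is not mere bookkeeping. If you insist on braid moves that leave every other entry untouched (the natural reading of your plan), the chosen entry is the ``mover'' and gets conjugated by $h$ or $h^{-1}$ each time it crosses an entry $h$; since the crossing word of any itinerary on a line is determined by its endpoints up to free cancellation, every such itinerary ending in the first slot produces the \emph{same} front entry, namely $(g_1\cdots g_{j-1})\,g_j\,(g_1\cdots g_{j-1})^{-1}$ — there is no freedom to realize an arbitrary conjugator in $\langle g_i: i\neq j\rangle$ this way. To change the conjugator one must disturb other entries, and then one must argue that generation by the complement survives (conjugating different generators by different elements can destroy generation); this is precisely the nontrivial content of the Conway--Parker style argument, and it is why the statement needs $n$ large in a way your $\log_2|G|$ bound does not capture. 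Note also that non-splitting is a red herring here: since the tuple has full monodromy, $c$ is a single conjugacy class of $\langle g_1,\dots,g_n\rangle=G$ by hypothesis, and indeed non-splitting is only needed later, in Lemma~\ref{le:rurfinite}, where proper subgroups $H$ and the classes $c\cap H$ intervene. The paper itself disposes of Proposition~\ref{pr:cp1} by citation, to the proof of \cite[Lemma 3.3]{EV} and to \cite{friedvolklein}; a self-contained proof should reproduce that argument (repeated entries and controlled conjugation) rather than the prefix-generation shortcut.
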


This in particular implies stability for the zeroth Betti number: $b_0(\CHur_{G,n}^c)$ is independent of $n$, for sufficiently large $n$.  Regarding $\pi_0(\CHur_{G, n}^c)$ as a subset of $S_n$, this shows that the map $\pi_0(\CHur_{G, n}^c) \to \pi_0(\CHur_{G, n+1}^c)$ given by adding $g$ at the beginning of an $n$-tuple is surjective for $n$ sufficiently large.  Both sets are finite, so this is eventually a bijection.

\begin{proof}
This is well-known (see e.g. \cite{friedvolklein}) but for completeness we include a proof here.  It is clear that, by repeated action of the braid action \ref{braiding-action-def}, we can pull the $i$th monodromy element to the beginning of the $n$-tuple; that is, the $n$-tuple $(g_1, \ldots, g_n)$ is equivalent to $(g_i, g'_1, \ldots, g'_{n-1})$ for some  $(n-1)$-tuple $(g'_1, \ldots, g'_{n-1})$.  Write $d$ for the order of an element of $c$.   If $n > d|c|$, some element $g'$ of $c$ occurs at least $d+1$ times in $(g_1, \ldots, g_n)$; we can use the braid action to pull these back to the front, forming an $n$-tuple
\beq
\underline{g} = (g',g', \ldots, g', g'_1, \ldots, g'_{n-d-1})
\eeq
equivalent under the braid action to $(g_1,\ldots, g_n)$.  The elements of this $n$-tuple generate $G$, whence $g',g'_1, \ldots, g'_{n-d-1}$ generate $G$.

Now the fact that $(g')^d = 1$ implies that, for any $k$-tuple $h_1, \ldots, h_k$, the $d+k$-tuple 
\beq
(\underbrace{g',g',\ldots,g'}_{d},h_1, \ldots, h_k)
\eeq
is equivalent under braiding to
\beq
(hg'h^{-1}, \ldots, hg'h^{-1}, h_1, \ldots, h_k)
\eeq
where $h$ is the product $h_1 \ldots h_k$; this equivalence is implemented by the braid that winds the first $d$ strands around the last $k$ strands -- i.e.,
we first braid the $g'$s to the right to obtain $(h_1, \ldots, h_k, g' , \dots,   g' )$, and then braid the $h$s to the right to obtain the tuple above.   Since the braid action can conjugate the $d$ copies of $g'$ by $h_1 \ldots h_k$ and by $h_1 \ldots h_{k-1}$, it can also conjugate those $d$ copies of $g'$ by $h_k$ alone.  Repeating this process, we find that
\beq
(g',g',\ldots,g',h_1, \ldots, h_k)
\eeq
is equivalent to 
\beq
(g'',g'', \ldots, g'', h_1, \ldots, h_k)
\eeq
for any $g''$ which is conjugate to $g'$ via an element in the group generated by $h_1, \ldots, h_k$.

Now the final $n-d$ elements of $\underline{g}$ generate $G$, as we have seen.  Since $g'$ is conjugate to $g$ via some element of $G$, the argument above shows that $\underline{g}$ is equivalent under the braid action to
\beq
(g,g,\ldots, g,g', g'_1, \ldots, g'_{n-d-1})
\eeq
This proves the proposition. 
\end{proof}
 
Recall that if $M = \oplus M_n$ is a graded $R$-module, we write $\deg M$ to mean the maximal $n$ such that $M_n \neq 0$.  If there is no such $n$, we say $\deg M = \infty$.  In the following Lemma, we prove a finiteness condition on $R$ which will turn out to imply all the homological properties of the category of $R$-modules that we require for the proof of the main theorem.

\begin{lem}  Suppose that $(G,c)$ has the non-splitting property. 
For an integer $D$, write
\beq
U_D = \sum_{g \in c} r_g^{D|g|},
\eeq
so that $U_D$ is in the center of $R$. 
Then there exists a $D$ such that the degree of both kernel and cokernel
of $$R \stackrel{U_D}{\longrightarrow} R, \ \ r \mapsto U_D r$$ 
are finite.  %
\label{le:rurfinite}
\end{lem}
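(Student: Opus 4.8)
The goal is to show that, under the non-splitting hypothesis, there exists $D$ so that $R/U_DR$ has finite degree, i.e. $R_n = U_D R_{n-D'}$ in all sufficiently large degrees (where $D' = D\cdot\mathrm{lcm}_g|g|$ or we take a common multiple). Equivalently, every $s \in S_n$ for $n \gg 0$ can be written as a product $t \cdot s'$ where $t$ is (a braid-class of) the tuple appearing in one of the summands $r_g^{D|g|}$ of $U_D$, and $s' \in S_{n - D|g|}$. The plan is to first reduce to the case of tuples with a \emph{fixed} global monodromy subgroup $H \leqslant G$ (since global monodromy is a braid-invariant, $R$ decomposes as a direct sum over subgroups $H$ generated by subsets of $c$, and by non-splitting $c \cap H$ is a single conjugacy class of $H$, so each summand is itself of the same shape for the pair $(H, c\cap H)$). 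So without loss of generality we may assume all tuples under consideration have full monodromy $G$.

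\textbf{Key steps.} First I would invoke Proposition~\ref{pr:cp1}: for $n$ large, any full-monodromy $n$-tuple is braid-equivalent to one beginning with a fixed $g \in c$, with the remaining $n-1$ entries still generating $G$. Iterating, for $n$ large we can pull out as long an initial string of copies of a \emph{fixed} $g$ as we like — more precisely, by repeatedly applying the proposition (each time to the tail, which still has full monodromy once $n$ is large enough relative to the constant in the proposition), every $s \in S_n$ with $n \gg 0$ is equivalent to $(g, g, \ldots, g, \ast, \ldots, \ast)$ with at least $D|g|$ leading copies of $g$. The string $r_g^{D|g|}$ is exactly one summand of $U_D$. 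The second key point is that this summand is \emph{central}: using the relation \eqref{eq:relation}, $r_g^{|g|}$ commutes with every $r_h$ (conjugating $h$ by $g$ a total of $|g|$ times returns $h$), hence $U_D = \sum_{g\in c} r_g^{D|g|}$ is central, as already asserted in the statement. Centrality is what lets us conclude: once $s = r_g^{D|g|}\cdot s'$ as elements of $R$, we have $s \in U_D R$ provided the \emph{other} summands $r_{g'}^{D|g'|}\cdot s'$ for $g' \neq g$ can be absorbed — but here one must be slightly careful, since $U_D R$ contains $\sum_{g'} r_{g'}^{D|g'|} s'$, not $r_g^{D|g|}s'$ alone.

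\textbf{Resolving the absorption issue / main obstacle.} The way around this — and the step I expect to be the real content — is not to work one $s$ at a time but to show $R_n$, as a whole, lies in $U_D R_{n - N}$ for appropriate $N$, by a downward induction or a direct spanning argument: one shows that modulo $U_D R$, the span of $S_n$ is contained in the span of tuples that do \emph{not} admit $D|g|$ leading copies of any $g\in c$ after braiding, and then argues this latter set is empty for $n$ large by the iterated Proposition~\ref{pr:cp1}. The subtlety is that $U_D R$ mixes the different conjugacy-class-powers, so to deduce $r_g^{D|g|} s' \in U_D R$ from $\sum_{g'} r_{g'}^{D|g'|} s' \in U_D R$ one needs to know that the "error terms" $r_{g'}^{D|g'|} s'$ for $g' \neq g$ are \emph{themselves} already known to be in $U_D R$ — which is exactly what a downward induction on degree (or on a suitable complexity of $s'$) provides, the base case being trivially the finitely many low-degree elements. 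Concretely: suppose by induction every element of degree $< n$ that can be braided to have a long leading block lies in $U_DR$; given $s\in S_n$, braid it to $r_g^{D|g|} s'$; then $s - U_D s' = -\sum_{g'\neq g} r_{g'}^{D|g'|} s'$, and each $r_{g'}^{D|g'|}s'$ has degree $n$ but — crucially — one can arrange (again via Proposition~\ref{pr:cp1} applied to $s'$, using that $s'$ still has full monodromy $G$ when $n \gg 0$) that $s'$ itself, hence $r_{g'}^{D|g'|}s'$, braids to something with a long leading block of a \emph{single} letter, reducing to a configuration already handled. Making this induction close cleanly — choosing the complexity measure so that the error terms genuinely decrease — is the heart of the argument; everything else (centrality, the semigroup description, reduction to full monodromy via non-splitting) is formal. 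I would therefore organize the proof as: (1) reduce to full monodromy using non-splitting; (2) record centrality of $U_D$ from \eqref{eq:relation}; (3) fix the complexity measure and set up the induction; (4) run the absorption step using Proposition~\ref{pr:cp1} on the tail, deducing $R_n \subseteq U_D R$ for all $n$ past an explicit bound, which is precisely the finiteness of $\deg(R/U_DR)$.
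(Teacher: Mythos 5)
Your write-up correctly locates the difficulty (the ``absorption'' problem: $U_D$ is a sum over all of $c$, so knowing $s \sim r_g^{D|g|}s'$ for one $g$ does not put $s$ in $U_DR$), but it does not resolve it, and the resolution is the actual content of the lemma. The error terms $r_{g'}^{D|g'|}s'$ have the same degree $n$ as $s$, so no induction on degree applies, and ``reducing to a configuration already handled'' is never given a well-founded meaning — you say yourself that choosing the complexity measure is ``the heart of the argument.'' The paper closes this gap not by an induction on tuples but by the \emph{choice of $D$}, which in your outline plays no role at all (a sign the key idea is missing, since the lemma asserts ``there exists a $D$''). Writing $S_n(H)$ for the set of braid orbits of $n$-tuples from $c$ generating a subgroup $H$, one first shows via iterated use of Proposition~\ref{pr:cp1} (applied to $(H, c\cap H)$, legitimate by non-splitting) that multiplication by $r_g^{|g|}$ is eventually a bijection $S_n(H)\to S_{n+|g|}(H)$ for each $g\in c\cap H$; these bijections commute, so for a suitable $D$ (killing the orders of all the resulting permutations $A\circ B^{-1}$ of the finite sets $S_n(H)$) multiplication by $r_g^{D|g|}$ is, for large $n$, a bijection \emph{independent of} $g\in c\cap H$. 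Hence for $y\in S_N(H)$ there is a single $x$ with $r_g^{D|g|}x=y$ for all $g\in c\cap H$ simultaneously, giving $U_Dx=|c\cap H|\,y+y'$ where $y'$ involves only strictly larger monodromy subgroups, and one divides by $|c\cap H|$, which is invertible in $k$. You never invoke the invertibility of $|G|$ in $k$, which the paper flags as used ``in a substantial way''; any argument avoiding it should be viewed with suspicion.

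Your step (1) is also flawed as stated: $R$ does decompose as a vector space according to the exact monodromy subgroup, but this decomposition is \emph{not} preserved by $U_D$, since $r_g^{D|g|}$ with $g\notin H$ carries tuples generating $H$ to tuples generating $\langle H,g\rangle$; so there is no ``WLOG full monodromy.'' The correct structure is the filtration $F_mR$ spanned by tuples whose monodromy subgroup has order at least $m$, which $U_D$ does preserve, together with a \emph{descending} induction on $m$: the higher-filtration error $y'$ above is absorbed by the inductive hypothesis at larger subgroup order, not by any induction on tuples of the same degree. With the special $D$, this filtration, and the invertibility of $|c\cap H|$, the argument closes exactly where your outline stalls; your correct observations (centrality of $r_g^{|g|}$ from \eqref{eq:relation}, the use of Proposition~\ref{pr:cp1} to pull out leading blocks) are the formal part, as you suspected.
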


\begin{proof}

The main ingredient is Proposition~\ref{pr:cp1}.  The non-splitting property for $G$ is used in an essential way, allowing the application of this Proposition to the pair $(H, c \cap H)$ for subgroups $H \leq G$.

Within the present proof, we refer to the subset of $S_n$ consisting of braid orbits on $n$-tuples generating $H$ as $S_n(H)$.
We first  show that for every subgroup $H$ of $G$, every element $g$ of $c \cap H$, and every sufficiently large $n$, the map \begin{equation} \label{bij} S_n(H) \rightarrow S_{n+|g|}(H), \ \ 
 s \mapsto r_g^{|g|} s \end{equation} 
 is bijective. 
It suffices to show that this map is surjective, for large enough $n$: since all the sets
involved are finite, it must then be eventually bijective, which is the assertion to be proved.

If $c \cap H$ is empty, then so too is $S_{N}(H)$; the claim is vacuously true.  If not, take $s \in S_n(H)$; for sufficiently large $n$, Proposition~\ref{pr:cp1} (applied to the group $H$ and conjugacy class $c \cap H$)
 shows that $$s  \sim r_g s', \ \ s' \in S_{n-1}(H).$$ 
 Note it is exactly at this point that the  non-splitting property enters our argument: we used the fact that $c \cap H$ is a single conjugacy class. See also the discussion after the Theorem in \S \ref{introhur}.
  
Increasing $n$ as necessary, we can repeat this process $|g|$ times; this
shows that $r_g^{|g|}$ induces a surjective map, as desired. 
 
It is not clear that different choices of $g \in c \cap H$ induce the {\em same} bijection in \eqref{bij}. 
For $g_1, g_2 \in c \cap H$, observe that multiplication by $ A := r_{g_1}^{|g_1|}$ and $ B := r_{g_2}^{|g_2|}$
commute (as self-maps of $S$).   For sufficiently large $n$,  the map  $B$ is a bijection from $S_n(H)$ to $S_{n+|g|}(H)$ 
and so has an inverse, which we denote $B^{-1}: S_{n + |g|}(H) \ra S_n(H)$.  So, again for sufficiently large $n$, 
$  A \circ B^{-1}$  is a permutation of $S_n(H)$.  
Let $D$ be chosen so
that every permutation of every $S_n(H)$ has order dividing $D$. 
Then $(A \circ B^{-1})^D = A^D \circ B^{-D} $ induces the identity map on $S_n(H)$ for large enough $n$. 
Thus, for such $D$,  the map:
$$ S_n(H) \stackrel{r_g^{D|g|}}{\longrightarrow} S_{n+D|g|}(H)$$
is -- for large $n$ -- a bijection and is independent of $g \in c \cap H$.

For $m \geq 1$,  set $F_m R$ to be the subspace of $R$ generated
 by elements in $S_n(H)$,  as $n$ ranges over nonnegative integers and $H$ ranges over subgroups of order at least $m$.  Note that $U_D$ preserves
 $F_m R$; we now show, by descending induction on $m$, that
  \begin{equation} \label{fh} U_D : F_m R_n \rightarrow F_m R_{n + D |g|} \end{equation} 
 is an isomorphism
 for sufficiently large $n$. Again, it is sufficient to show that \eqref{fh} is surjective for sufficiently large $n$. 

The inductive claim is valid for $m> |G|$ trivially. 
 Now suppose it is true whenever $m > m_0$.

 Let $H$ be a subgroup of $G$ of size $m_0$. 
It suffices to check that -- for large enough $N$ -- every $y \in S_{N}(H)$ belongs to the image of $U_D$.  Again, this is vacuously true if $c \cap H = \emptyset$.  Otherwise, by what
 we have shown, there exists $x \in S_{N - D|g|}(H)$ such that $r_g^{D |g|} x = y$
 for all $g \in c \cap H$. Therefore, 
 $$U_D x = |c \cap H| y + y', \mbox{ where $y' = \sum_{g \in c \setminus (c \cap H)} r_g^{D|g|} x \in F_{m_0+1} R_N$}.$$ 
By inductive assumption, there exists, for sufficiently large $N$, $x' \in F_{m_0+1}R_{N-D|g|}$ such that $U_D x' = y'$. Consequently, $U_D(x-x') = |c \cap H| y$. Since $c \cap H$ is a conjugacy class of $H < G$, the size of $c \cap H$ divides $|G|$
and is therefore invertible in $k$, we conclude that $y$ is in the image of $U_D$, as desired. 
 \end{proof}

We note that the assumption that $|G|$ is invertible in $k$ was used in a substantial way in this proof.  Consequently, we do not expect that rational homological stability for Hurwitz spaces can be improved to an integral result, since Theorem \ref{th:stability} depends in a basic way upon this fact.  However, the proof of Lemma \ref{le:rurfinite} suggests that an integral result may be possible for subspaces of Hurwitz spaces (such as $\CHur_{G, n}^c$). 

\section{The $\mathcal{K}$-complex associated to an $R$-module} \label{Rhomology}

This section is solely concerned with homological properties of the ring $R$ introduced in \S \ref{Rdef}. 
In particular, we associate to each $R$-module $M$ a certain Koszul-like complex,
the $\mathcal{K}$-complex (\S \ref{kcomplex}).
 We shall see in \S \ref{arc-complex} that the homology of Hurwitz spaces can be inductively expressed
in terms of $\mathcal{K}$-complexes formed from homology of {\em smaller} Hurwitz spaces.

Our main result, Theorem \ref{Koszulbound}, is that the higher homology of the $\mathcal{K}$-complex is controlled by its $H_0$ and $H_1$. The overall thrust of this section can be roughly summarized by the slogan
 ``$R$ behaves as if it had cohomological dimension $1$.'' 

Let us explain why this point of view is useful for proving homological stability for Hurwitz spaces.  In most situations where homological stability is understood, one has a sequence of (usually connected) spaces $X_n$ and stabilization maps $f_n: X_n \to X_{n+1}$; the goal is to show that each $f_n$ induces homology isomorphisms in a range of dimensions.  Let $X= \sqcup_n X_n$, and consider the homology
$$M_p = H_p(X) =\oplus_n H_p(X_n)$$
Give $M_p$ the structure of a $k[x]$-module by making the indeterminate $x$ act via the stabilization map.  $M_p$ admits a grading by the number $n$, and $x$ acts as a degree 1 operator.  Homological stability is rephrased as the statement that $x$ is an isomorphism in sufficiently high degree.  Equivalently, we need the quotient and $x$-torsion
$$\begin{array}{ccc}
\Tor_0^{k[x]}(k, M_p) = M_p/xM_p & {\rm and} & \Tor_1^{k[x]}(k, M_p) = M_p[x]
\end{array}$$
to be concentrated in low degrees.

Approaching homological stability for Hurwitz spaces this way, one immediately runs into a problem: there are many natural stabilization maps, one for each isomorphism class of branched cover of the disk.  This more complicated structure is encoded, however, in the ring $R$ of connected components of the Hurwitz space, which replaces $k[x] = H_0(X)$ above.  As we have seen in Lemma \ref{le:rurfinite}, $R$ itself satisfies a form of stability so long as $(G,c)$ is nonsplitting.  This fact, combined with control of the homological algebra of $R$ (developed in this section) ultimately gives rise to homological stability for the Hurwitz spaces.

Throughout this section $(G,c)$ is non-splitting, and we take $U$ to be the central element $U_D$ defined in 
Lemma \ref{le:rurfinite}. 

\subsection{} \label{kcomplex}
Let $M$ be any graded left $R$-module.  
We may define a ``Koszul-like'' complex  ($\mathcal{K}$-complex for short) associated to $M$,
where\footnote{Recall (\S \ref{notn}) that $[i]$ denotes a shift in grading by $i$.} $\Koszul(M)_q = k[c^q] \otimes_{k} M[q]$, $q \geq 0$:  that is, we have 
\begin{equation} \label{koszuldef} 
\Koszul(M) := \ldots \ra k[c^q] \otimes_{k} M[q] \rightarrow k[c^{q-1}] \otimes_{k} M[q-1] \rightarrow \dots \rightarrow k[c] \otimes_{k} M[1]\rightarrow M[0] \end{equation} 
where  the differential $\Koszul(M)_{q+1} \rightarrow \Koszul(M)_{q}$ is described by:
$$(g_0, \dots, g_q) \otimes m \mapsto \sum_{i=0}^q (-1)^i (g_0, \dots, \widehat{g_i}, \dots, g_q) \otimes 
r(g_i^{g_{i+1} \cdots g_q})   m.$$

 We equip $k[c^q]$ with the trivial grading, i.e. the one concentrated in degree $0$. 
Then  the differentials preserve the grading. Moreover, if $M = R$, 
each homology group of $\Koszul$ is equipped with the natural structure of a graded {\em right} $R$-module. 
This notion is chosen to model  a complex which will arise  in our  study of the arc complex (\S  \ref{arc-complex}).

\begin{thm} \label{Koszulbound}
Suppose $(G,c)$ is non-splitting; let $M$ be a graded left $R$-module,  and let $h_i = \deg(H_i(\Koszul(M)))$. 
Then there exists a constant $A_0 = A_0(G,c)$ so that 
\begin{equation}  \label{theorem-equation} h_q \leq  \max(h_0, h_1) + A_0 q \ \ \ (q> 1).  \end{equation} 
 Moreover $M \stackrel{U}{\rightarrow} M$ is an isomorphism in source degree $\geq \max(h_0, h_1) + A_0$
 (that is to say, the induced map $M_i \rightarrow M_{i+\deg(U)}$ is an isomorphism for $i \geq  \max(h_0, h_1) + A_0$).

 Finally, in the case $M = R$, $h_0$ and $h_1$ are both finite. 
 \end{thm}

 The explicit value of $A_0$
 that comes from the proof is given in \eqref{constants}; we have not attempted to optimize
 this value as far as possible, since the precise bound makes no difference to our end goal.
 
This Theorem is fundamental to the proof of our main result.
A basic tool in its proof is comparing the homological algebra of $R$ and
the commutative, central subring $k[U]$.  The {\em centrality} of $U$ is vital to our argument, and will be used without comment repeatedly.

In what follows, we use the following notations and conventions:
\begin{itemize}
\item We denote the two-sided ideal $\oplus_{n > 0} R_n$ by $R_{>0}$; we give the field $k$ the structure of $R$-bimodule by identifying it with $R/R_{>0}$.   
\item  For $M$ a graded left $R$-module, we denote by $H_i(M)$ the graded left $R$-module $\Tor_i^R(k,M)$.  In particular, $H_0(M) = M/R_{>0}M$. 
\item We set $\Rbar = R/UR$; it is an $R$-bimodule.
\item    For an $R$-module $M$, write $M[U]$ for the $U$-torsion in $M$, 
  i.e. the kernel of the ``multiplication by $U$'' map $M \stackrel{U}{\rightarrow} M$.     
  \item Finally, recall (\S \ref{notn}) that if $M$
is a graded  $R$-module, we write $\deg(M)$ for the largest degree $n$ such that $M_n \neq 0$, if it exists; otherwise $\deg(M)=\infty$. 
 \end{itemize}

Thus, with these conventions, $\deg(\Rbar)$ and $\deg(R[U])$ are both finite, because of Lemma \ref{le:rurfinite}.  

\subsection{Comparing the homological algebra of $R$ and $k[U]$.}
Consider the functors
$$ \mbox{Left graded $R$-modules} \stackrel{f}{\longrightarrow} \mbox{Left graded $\bar{R}$-modules} \stackrel{g}{\longrightarrow}
\mbox{graded $k$-vector spaces},$$
where $f$   sends $M$ to $\bar{R} \otimes_{R} M$, and  $g$   sends $\bar{M}$ to $k \tensor_{\Rbar} \bar{M}$. 
Both $f$ and $g$ are right exact, and admit left derived functors.  Since $f$ carries free left $R$-modules
  to free left $\Rbar$-modules, and each left $R$-module has a resolution by free left $R$-modules, 
  we have a spectral sequence  \begin{equation} \label{groth}
\Tor_i^{\bar{R}}(k,   \Tor_j^R(\bar{R}, M)) \Rightarrow \Tor^R_{i+j}(k, M). \end{equation}

\begin{lem}  \label{easy}    Let $M$ be a graded left $R$-module and $N$ a graded right $R$-module. Then  
\begin{equation} \label{degbound1}
\deg(N \tensor_R M) \leq \deg(N)+  \deg(H_0(M)) 
\end{equation} 
\end{lem}

\begin{proof}

When $\deg(N) = \infty$ or $\deg(H_0(M)) = \infty$, the assertion is vacuously true, so we assume both numbers are finite from now on. 
 We will use similar reasoning in the proofs that follow, without explicit mention.
 
 The case of $N=k$ in degree $0$ follows at once because 
  $H_0(M) = k \otimes_R M$.

The general case reduces to this: 
Consider an exact sequence of graded right $R$-modules $0 \ra N_1 \ra N_2 \ra N_3 \rightarrow 0$; if the assertion holds
for $N = N_1, N_3$ it holds also for $N=N_2$.   The assertions are also unchanged by applying degree shifts to $N$.   
Now we proceed, by induction, on the largest degree $a$ in which $N_a \neq 0$. 
Note that $N_a$ is automatically 
an $R$-submodule, isomorphic as $R$-module to a sum of copies of $k$, and so the 
assertion is known for $N_a$. This gives, in particular,  the case $a=0$ of the induction; and for  $a>0$ we use the exact sequence $N_a \rightarrow N \rightarrow N/N_a$
and induction. 
\end{proof}

\begin{lem}  Let $\bar{M}$ be a left graded $\Rbar$-module.  Then the degree of $ \Tor^{\Rbar}_i(k,\bar{M})$ is at most $ (\deg \Rbar) i+ \deg(\bar{M}) $. 

\label{le:rbarbound}
\end{lem}

\begin{proof}
Let
\beq
\cdots \ra P_2 \ra P_1 \ra P_0 \ra k
\eeq
be a resolution of $k$ by projective right graded $\Rbar$-modules. 
 We note that $P_i$ can be chosen to be generated in degree at most $i \deg(\Rbar)$. Indeed, this is so for $i=0$, and we construct $P_i$ by taking the free module 
 on a set of generators for $\mathrm{ker}(P_{i-1} \rightarrow P_{i-2})$; 
 by inductive hypothesis $P_{i-1}$ is  supported in degree at most $\deg(\Rbar) i$. 
  Using Lemma \ref{easy} and the fact that every $\Rbar$-module is an $R$-module, we have 
\beq
\deg(\Tor^{\Rbar}_i(k,\bar{M}) )\leq \deg(P_i \otimes_{\Rbar} \bar{M}) \leq \deg(H_0(P_i)) + \deg(\bar{M})
\leq
(\deg \Rbar)i+ \deg(\bar{M}) .
\eeq
\end{proof}

In the lemmas that follow, we shall adopt the following notation for $M$ a graded left $R$-module: 
\begin{equation} \label{Adef}  A(M) = \max(\deg M[U], \deg M/UM), \end{equation} 
\begin{equation} \label{deltaMdefinition}  \delta(M)  = \max \left( \deg  \Tor_0^R (\bar{R}, M), \deg \Tor^R_1(\bar{R}, M)\right) \end{equation}
In both cases, we allow the value $\infty$ if the degrees in question are not finite. 
Note that $A(R) =\max(\deg R[U], \deg R/UR)$ is finite  by virtue of Lemma \ref{le:rurfinite}.

We will also use several constants in the proofs that follow. We summarize them here for convenience:
\begin{eqnarray} \label{constants} A_1 &=&\Aone, \\ \nonumber
 A_2 &=& \Atwo ,  \\ \nonumber 
 A_0 & =&  5A_1 + A_2 = 6 A(R) + \deg(U). \end{eqnarray}

\begin{lem} \label{opm2lem}  
Let $M$  be a graded left $R$-module.  
Then 
 \begin{equation}
 \label{opm2}  A(M) \leq \delta(M) + A_1,\end{equation}
where $A_1=  \Aone$ as in \eqref{constants}.
\end{lem}
\begin{proof}  

    The bound on the degree of $M/UM \simeq  \bar{R} \otimes_R M = \Tor^R_0(\bar{R}, M)$
is clear by definition of $\delta(M)$.  It remains to bound $\deg M[U]$.

In what follows, $UR$ denotes the two-sided ideal of $R$ generated by $U$. Let us write
$N = (UR) \otimes_R  M $.  Then $N$ has the structure of a graded left $R$-module. 
Now we may regard multiplication by $U$ as a map
  $M \stackrel{U}{\rightarrow} M $ of degree $\deg(U)$ which can be factored as  
$$ M \stackrel{\alpha}{\rightarrow}  N \stackrel{\beta}{\rightarrow}  M$$
where $\alpha$ is the map $M = R \otimes_R M \rightarrow UR \otimes_R M$ 
given by $s \otimes m \mapsto  Us \otimes m$, 
and  $\beta(s \otimes m) = sm$; 
thus $\alpha$ is of degree  $\deg(U)$ and $\beta$ is of degree $0$. 
Consequently, we get an exact sequence (i.e., exact at the $\ker(\alpha)$ and $M[U]$ terms):
\begin{equation} \label{froo} 0 \rightarrow \ker(\alpha) \rightarrow  M[U] \stackrel{\alpha}{\rightarrow} \ker(\beta) \end{equation}  
By tensoring the short exact sequences $UR \hookrightarrow R \twoheadrightarrow \bar{R}$  
and   $R[U] \hookrightarrow R \twoheadrightarrow UR$ with $M$, we obtain
$$ \ker(\beta) \simeq \Tor^R_1(\bar{R}, M)  \mbox{ and }   \ker(\alpha) \twoheadleftarrow M  \otimes_{R} R[U].$$
This turns \eqref{froo} into the following sequence, exact at the middle term
\begin{equation} \label{froo2}    R[U] \otimes_R M \rightarrow M[U] \rightarrow \Tor^R_1(\bar{R}, M). \end{equation} 
 where the first map is of degree $0$ and the second map is of degree $\deg(U)$. 
 We also observe that, by the definition  \eqref{deltaMdefinition} of $\delta(M)$, we have $\deg \Tor^R_1(\bar{R}, M) \leq \delta(M)$.

Now
 \beq
\deg(R[U] \otimes_R M) \leq \deg(H_0(M)) + \deg(R[U])
\eeq
by Lemma \ref{easy}.  We note that $\deg H_0(M) \leq \deg M/UM \leq \delta(M)$.  Thus we arrive at the bound: 
\begin{eqnarray*} \deg(M[U]) & \leq& \delta(M) + \deg(R[U])  \\ &\leq& \delta(M) + A(R). \end{eqnarray*}

 \end{proof}

\begin{lem}  \label{opm3lem}
Let $M$ be a graded left $R$-module; then, with notation as above, 
\begin{equation} \label{opm3} \deg \Tor_i^R(\bar{R}, M) \leq \delta(M) + A_1 i \end{equation} 
where $A_1=  \Aone$ as before. 
\end{lem}
\begin{proof}
(We thank the referee for the proof that follows, which greatly improves on 
the prior version.) 
Firstly, the assertion is clear for $i = 0$
and for $i =1$ by definition 
of $\delta(M)$. 
 We proceed now by induction on $i$. 
 
Note that $R[U]$ is in fact a right $\bar{R}$-module.
 Construct a resolution of $R[U]$ by free right $\bar{R}$-modules:
\begin{equation} \label{refseq}  \cdots \rightarrow Q_2 \rightarrow Q_1 \rightarrow Q_0 \twoheadrightarrow R[U]. \end{equation} 
 Here, we may suppose that $Q_i$ is generated as a $\bar{R}$-module by elements of degree $\leq \deg(R[U]) + i \deg{\bar{R}} \leq (i+1)A_1$
 by the argument of Lemma \ref{le:rbarbound}.
  Combine  \eqref{refseq}  with $R[U] \rightarrow R \stackrel{U}{\rightarrow} R \rightarrow \bar{R}$ to obtain a resolution of $\bar{R}$
  by right $R$-modules
 $$\cdots \rightarrow Q_2 \rightarrow Q_1 \rightarrow Q_0 \rightarrow R \stackrel{U}{\rightarrow} R \to \bar{R} \to 0$$
Call this complex $P_{\bullet}$,
 so that $P_1=P_0=R$ and $P_i = Q_{i-2}$ when $i \geq 2$.   We now have a hyperhomology sequence
 $$E^1_{ij} = \Tor^R_i(P_j, M) \implies \Tor^R_{i+j}(\bar{R}, M).$$
 
That shows that, for $b \geq 2$,  $\Tor^R_{b}(\bar{R}, M)$ admits a filtration, whose associated graded is a subquotient of
\begin{equation} \label{refproof} \bigoplus_{u=0}^{b-2} \Tor^R_u(Q_{b-u-2}, M). \end{equation}
By construction, $Q_{b-u-2}$ is a free $\bar{R}$-module which was generated
in degree at most $(b-u-1) \cdot A_1$, so the degree of \eqref{refproof} -- and so also of $\Tor^R_b(\bar{R}, M)$ -- 
is at most the maximum over $0 \leq u \leq b-2$ of   \begin{eqnarray}  && (b-u-1) A_1 + \deg \Tor^R_u(\bar{R}, M) 
 \\   &\leq_{(i)}& (b-u-1) A_1 + \delta(M)  +  A_1 u 
 \\ & \leq_{ \  \  \ } & b A_1 + \delta(M),    \end{eqnarray}
 where step (i) follows from the induction hypothesis, since $u < b$.
  \end{proof}

\begin{lem} \label{NAK} 
Let $\bar{M}$ be any left $\bar{R}$-module; then $ \deg(\bar{M})
\leq \deg(k \tensor_{\Rbar} \bar{M} )  + \deg(\Rbar)$. 
\end{lem}
\begin{proof}
This  follows from a form of  Nakayama's Lemma: Choose homogeneous elements  $x_1, \dots $ of $\bar{M}$ projecting to a $k$-basis
for $k \tensor_R \bar{M}$; the quotient $\mathcal{Q} = \bar{M}/ \sum \Rbar x_i$
is a graded $\bar{R}$-module and satisfies $k\tensor_{\Rbar} \mathcal{Q }= 0$. We claim $\mathcal{Q}$ is zero; if not
let $j$ be the smallest integer such that $j$th graded piece of $\mathcal{Q}$ is nonzero. 
  The image of this graded piece in $k \tensor_{\Rbar} \mathcal{Q} $ cannot be trivial, since  this tensor product
  is the same as the quotient of $\mathcal{Q}$ by $\mathcal{Q}' := \ker(\Rbar \rightarrow k) \cdot \mathcal{Q}$, 
  and $\mathcal{Q}'$ is supported in degrees strictly greater than $j$. 
\end{proof}

\begin{lem} \label{DLbound}
 For any graded  left $R$-module $M$, we have  $$ \delta(M) \leq  \max(\deg(H_0(M)),\deg(H_1(M))) +4A_1.$$ \end{lem}

\begin{proof} 
Now 
\begin{equation} \label{firstbound} \deg  (  \Rbar \otimes_R M) \leq  \deg H_0(M) + \deg(\bar{R}), \end{equation} 
by Lemma \ref{easy}. 

We must now bound  $\deg \Tor^R_1\left(\bar{R}, M\right)$. 
The spectral sequence \eqref{groth} gives an exact sequence
$$\Tor_2^{\Rbar}(k,  \Rbar \otimes_R M) \ra k\tensor_{\Rbar}   \Tor^R_1(\Rbar, M) \ra H_1(M),$$ 
and  applying  Lemma \ref{NAK} to the left $\bar{R}$-module $\Tor^R_1(\Rbar, M)$, we see that
\begin{equation} \label{frugga} \deg \Tor^R_1(\Rbar, M) \leq  \deg \Rbar + \max(\deg H_1(M), \deg \Tor_2^{\Rbar}(k,  \Rbar \otimes_R M)).\end{equation}

By Lemma \ref{le:rbarbound} we have
\begin{eqnarray}  \deg \Tor_2^{\Rbar}(k,  \Rbar \otimes_R M)  & \leq &  2 \deg(\bar{R}) + \deg \Rbar \otimes_R M  \\ 
& \leq & 
 3 \deg(\bar{R}) + \deg H_0(M) \end{eqnarray}
 where we used \eqref{firstbound} again for the last inequality.  Combining this with \eqref{frugga}  and the fact that $\deg(\bar{R}) \leq A_1$ yields the desired bound on $\deg \Tor^R_1\left(\bar{R}, M)\right)$. 
 \end{proof}

\begin{prop}     Let $M$ be a graded left $R$-module and $N$ a graded right $R$-module. Then 
 \beq \label{sec}
\deg(\Tor^R_i(N,M)) \leq \deg(N) + \max(\deg(H_0(M)),\deg(H_1(M)))  + A_1 i +  4A_1. 
\eeq
for all $i \geq 0$.
\label{pr:cd1}
\end{prop}

\begin{proof}

Just as in the proof of Lemma \ref{easy}, we reduce to the case where $N=k$ in degree $0$ 
and can assume that $\deg H_0(M)$ is finite. 

 Now \begin{eqnarray} \nonumber 
\deg(\Tor^R_i(k,M))  &\leq&  \max_{a+b=i} \deg \Tor^{\bar{R}}_a\left( k, (\Tor^R_b(\bar{R}, M)\right)  
\\ & \leq_{(i)} &   \max_{a+b=i} \left( a \deg(\bar{R}) + \deg \Tor^R_b(\bar{R}, M)   \right) 
\\& \leq_{(ii)} &   \max_{a+b=i}  \left( a \deg(\bar{R}) + b A_1 + \delta(M)  \right) 
 \\ &  \leq_{(iii)} &  A_1 i + \max(\deg(H_0(M)),\deg(H_1(M))) + 4 A(R)
\label{eq:torboundwithnu}
\end{eqnarray}
where we used Lemma \ref{le:rbarbound} for step (i),  Lemma \ref{opm3lem} for step (ii),
and Lemma \ref{DLbound} together with the fact $\deg(\bar{R}) \leq A_1$ for step (iii).

\end{proof}

Having set up the basic bounds for the degrees over Tor groups over $R$ and $\bar{R}$, we are now ready to analyze the cohomology of the complex $\Koszul(M)$.
As the notation $\Koszul$ indicates, the complex $\Koszul$ is an analog of the Koszul complex, and the following result is an analog
of a well-known result in commutative algebra concerning Koszul complexes -- see, for example, Theorem 16.4 and following discussion
in \cite{Matsumura}.  

\begin{lem}  
 Each $H_q(\Koszul(R))$ is killed by the right action of $R_{>0}$.
\label{le:koszulhomologyispuny}
\end{lem}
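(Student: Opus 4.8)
The plan is to show that for each homogeneous $r \in R_{>0}$ and each cycle $z \in \Koszul(R)_q$, the class $[z \cdot r] \in H_q(\Koszul(R))$ vanishes. Since $R_{>0}$ is generated by the degree-one elements $r_g$, $g \in c$, it suffices to treat $r = r_g$. The key observation is that the $\Koszul$-complex of $R$ carries an explicit chain homotopy witnessing this: define, for each $g \in c$, a degree-one map $s_g : \Koszul(R)_q \to \Koszul(R)_{q+1}$ by prepending $g$ to the front of the tuple, i.e. $s_g(g_1, \dots, g_q; m) = (g, g_1, \dots, g_q; m)$ (note the boundary-monodromy twist in the differential acts trivially on the newly-prepended coordinate, since there is nothing to its left in our indexing convention, so no extra conjugation is introduced on $m$).

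The main computation is then to expand $d s_g + s_g d$ applied to a chain $(g_1, \dots, g_q; m)$ using the formula for the differential in \S\ref{kcomplex}. In $d(s_g(g_1,\dots,g_q;m))$ the $i=0$ term removes the prepended $g$ and multiplies $m$ on the left by $r(g^{g_1 g_2 \cdots g_q})$, returning $\pm(g_1,\dots,g_q; r(g^{g_1\cdots g_q}) m)$; the remaining terms ($i \geq 1$) are exactly $-s_g(d(g_1,\dots,g_q;m))$ up to sign bookkeeping. So $(ds_g + s_g d)(g_1,\dots,g_q;m) = (g_1,\dots,g_q; r(g^{g_1\cdots g_q})m)$. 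The right-hand side is $(g_1,\dots,g_q;m)$ acted on by the element $r(g^{g_1 \cdots g_q})$ of $R$ — but here is the subtlety: the relations \eqref{eq:relation} in $R$ let us move generators past each other, and in particular $r_g \cdot (\text{anything of boundary monodromy } w) = (\text{that thing}) \cdot r_{g^{w}}$; conversely $(g_1,\dots,g_q;m)\cdot r_g = (g_1,\dots,g_q; ?)$ needs the right-module structure on $\Koszul(R)$ to be spelled out. I would either (a) use that the right $R$-module structure on $H_*(\Koszul(R))$ is induced by right multiplication on the $m$-slot, so that $(g_1,\dots,g_q;m)\cdot r(g^{g_1\cdots g_q})$ is a cycle representing the same homology class as $[z]\cdot r_g$ once one conjugates appropriately, or (b) observe more cleanly that $\sum_{g\in c}$ of these homotopies, paired against the $U_D$-type central elements, gives the statement directly. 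The cleanest route is (a): since $g \mapsto g^{g_1\cdots g_q}$ is a bijection of $c$ (conjugation by a fixed element preserves the conjugacy class $c$), prepending runs over all of $c$, and the identity $(ds_g + s_g d) = (\text{right mult. by a generator of } R_{>0})$ holds on the nose at chain level after reindexing.

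I expect the main obstacle to be purely bookkeeping: getting the signs right in $d s_g + s_g d$, and — more conceptually — making precise the compatibility between the right $R$-action on $\Koszul(R)$ (which acts on the module argument $m \in R$ on the right) and the left-conjugation twist $r(g_i^{g_{i+1}\cdots g_q})$ that appears in the differential. One must check that $s_g$ is genuinely a map of graded right $R$-modules (it is, since it only touches the tuple-coordinates and the left-most slot, while the right action is on $m$), so that the homotopy identity is an identity of right-$R$-module maps; then the conclusion that $R_{>0}$ annihilates $H_q(\Koszul(R))$ is immediate. No finiteness input (and in particular nothing about $U_D$ or the non-splitting property) is needed for this lemma — it is a formal consequence of the shape of the $\Koszul$-differential, which is why it is stated and proved before the harder Propositions \ref{pr:cd1} and \ref{pr:K(r)}.
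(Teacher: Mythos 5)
Your chain-level computation is right, but it proves null-homotopy of the wrong operator, and the step you flag as a ``subtlety'' is exactly where the missing idea lives. With the untwisted homotopy $s_g(g_1,\dots,g_q;s)=(g,g_1,\dots,g_q;s)$ you get
$(ds_g+s_gd)(g_1,\dots,g_q;s)=(g_1,\dots,g_q;\,r(g^{g_1\cdots g_q})\,s)$,
and by iterating \eqref{eq:relation} this equals $(g_1,\dots,g_q;\,s\,r(g^{g_1\cdots g_q\,\partial s}))$: right multiplication by a generator that depends on both the tuple and on $\partial s$. So $ds_g+s_gd$ is \emph{not} right multiplication by $r_g$, nor by any fixed generator, and your resolution (a) (``representing the same homology class once one conjugates appropriately'') is an assertion, not an argument; as for (b), summing over $g\in c$ only shows that the single central element $\sum_{g\in c}r_g$ annihilates $H_q(\Koszul(R))$, which is strictly weaker than the lemma (and not obviously enough for Proposition~\ref{pr:K(r)}, since finiteness of $\deg\bigl(R/\bigl(\sum_g r_g\bigr)R\bigr)$ is not what Lemma~\ref{le:rurfinite} provides).

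The fix is to build the conjugation into the homotopy rather than hoping to reindex afterwards: this is what the paper does, setting
$S_g(g_0,\dots,g_q;s)=\bigl(g^{(g_0\cdots g_q\,\partial s)^{-1}},g_0,\dots,g_q;s\bigr)$,
so that the $i=0$ face contributes $r\bigl(g^{(\partial s)^{-1}}\bigr)s$, which equals $s\,r_g$ on the nose by \eqref{eq:relation}; hence $S_gd+dS_g$ is exactly right multiplication by $r_g$ and the lemma follows for every generator, hence for all of $R_{>0}$. Equivalently, you could rescue your untwisted $s_g$ by observing that the differential preserves the total boundary monodromy $\beta=g_0\cdots g_q\,\partial s$, so $\Koszul(R)$ splits as a direct sum of subcomplexes indexed by $\beta\in G$; on the $\beta$-summand your identity reads $ds_g+s_gd=$ right multiplication by $r_{g^{\beta}}$, and since $g\mapsto g^{\beta}$ permutes $c$ you may choose $g$ per summand to realize any given $r_h$. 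Either way, some such precise mechanism is needed; as written, the proposal stops short of it. (Your closing remark is correct, though: no finiteness or non-splitting input is needed -- the paper's proof is likewise purely formal.)
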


\begin{proof} 

For $s = (h_1, \dots, h_n) \in S$, write $\partial s = h_1\cdots h_n \in G$, and define  \beq
S_g(g_0, \ldots, g_q; s ) = (g^{(g_0 \ldots g_q \partial s)^{-1}},g_0, \ldots, g_q; s)
\eeq
Extend $S_g$ linearly to $\Koszul(R)_{q+1} \to \Koszul(R)_{q+2}$.  By a routine computation, \beq
(S_g d + d S_g)(g_0, \ldots, g_q; s) = (g_0, \ldots, g_q;  r(g^{(\partial s)^{-1}}) s)
= (g_0, \ldots, g_q; s r_g) 
\eeq
and so ``right multiplication by $r_g$'' is homotopic to zero. \end{proof}

\begin{prop}  $\deg H_q(\Koszul(R)) \leq A_2 + q$, where $A_2 = \Atwo$.
\label{pr:K(r)}
\end{prop}

\begin{proof}

Multiplication by $U$ induces an endomorphism of the complex $\Koszul(R)$.  (Recall that $U$ is central, 
and thus it does not matter whether this multiplication is taken on the left or the right.)
Indeed, in the diagram \beq
\begin{CD}
\Koszul(R)_q @>d_q>> \Koszul(R)_{q-1} \\
@A{U}AA	@AA{U}A \\
\Koszul(R)_q @>d_q>> \Koszul(R)_{q-1} \\
\end{CD}
\eeq
the vertical arrows induce isomorphisms in source degree $n > A_1 + q $  (recall
that $\Koszul(R)_q$ is just a direct sum of copies of the shift $R[q]$.). This implies that the map
\beq
U: \ker d_q \ra \ker d_q
\eeq
is an isomorphism in source degree $n  >  A_1 + q $.  In particular, $\ker d_q$ is generated, as right $R$-module,  in degree at most $A_1  +  \deg U + q$, and the same is true for its quotient $H_q(\Koszul(R))$.  By Lemma~\ref{le:koszulhomologyispuny}, $H_q(\Koszul(R))$ is killed by $R_{>0}$; it follows that \beq
\deg(H_q(\Koszul(R))) \leq A_2+q,
\eeq
where $A_2 = A_1+\deg(U)$ is as in \eqref{constants}.
 \end{proof}

\begin{prop} \label{Koszulbound2}
Let $M$ be a left graded $R$-module.  Then
\beq
\deg H_q(\Koszul(M)) \leq \max(\deg(H_0(M)), \deg(H_1(M)) + A_1q  + (4A_1 +A_2).
\eeq
for all $q \geq 0$.
\end{prop}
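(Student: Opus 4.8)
The plan is to reduce the computation of $H_\bullet(\Koszul(M))$ to the already-established facts about $H_\bullet(\Koszul(R))$ (Proposition~\ref{pr:K(r)}) and about the degrees of $\Tor^R$ (Proposition~\ref{pr:cd1}), using the identification $\Koszul(M) = \Koszul(R) \otimes_R M$. Since each term $\Koszul(R)_q = R^{c^q}[q]$ is a free right $R$-module, tensoring the complex $\Koszul(R)$ with $M$ over $R$ computes, via a hyper-Tor spectral sequence, the homology of $\Koszul(M)$ in terms of $\Tor^R_p(H_q(\Koszul(R)), M)$.

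Concretely, I would argue as follows. First I would recall that $\Koszul(R)$ is a complex of free right $R$-modules, so for any left $R$-module $M$ there is a first-quadrant spectral sequence
\begin{equation}
E^2_{p,q} = \Tor^R_p\bigl(H_q(\Koszul(R)),\, M\bigr) \;\Longrightarrow\; H_{p+q}(\Koszul(M)).
\end{equation}
(This is the standard hyperhomology spectral sequence of the functor $- \otimes_R M$ applied to the complex $\Koszul(R)$; the other spectral sequence degenerates because the terms of $\Koszul(R)$ are free.) Then I would bound the degrees of the $E^2$ terms. By Proposition~\ref{pr:K(r)}, $\deg H_q(\Koszul(R)) \leq A_1 + q$, and by Lemma~\ref{le:koszulhomologyispuny} each $H_q(\Koszul(R))$ is killed by $R_{>0}$, hence has $\deg H_0(\,\cdot\,) = \deg H_1(\,\cdot\,) = \deg H_q(\Koszul(R)) \le A_1 + q$ as a right $R$-module whose module structure is trivial. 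Applying the second assertion of Proposition~\ref{pr:cd1} with $N = H_q(\Koszul(R))$ gives
\begin{equation}
\deg \Tor^R_p\bigl(H_q(\Koszul(R)), M\bigr) \;\leq\; (A_1 + q) + \max\bigl(\deg H_0(M), \deg H_1(M)\bigr) + A_3 p + A_4.
\end{equation}
Since on the abutment $H_n(\Koszul(M))$ is a subquotient of $\bigoplus_{p+q=n} E^2_{p,q}$, its degree is at most the maximum of the degrees of those $E^2_{p,q}$ with $p + q = n$; bounding $p \le n$ and $q \le n$ and using $A_3 \ge 1$ yields
\begin{equation}
\deg H_n(\Koszul(M)) \;\leq\; \max\bigl(\deg H_0(M), \deg H_1(M)\bigr) + A_3 n + A_1 + A_4,
\end{equation}
which is exactly the claimed inequality (the displayed statement has a typographical missing parenthesis; the intended right-hand side is $\max(\deg H_0(M), \deg H_1(M)) + A_3 q + A_1 + A_4$).

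The only real subtlety I anticipate is justifying the spectral sequence carefully in the graded setting and making sure the degree bookkeeping is uniform — in particular that one may take a single bounded free resolution argument so that the constants $A_1, A_3, A_4$ depend only on $(G,c)$ and not on $M$ or $q$. This is handled by the freeness of the terms of $\Koszul(R)$ (so that $\Koszul(R) \otimes_R (\text{free resolution of } M)$ is a bicomplex whose rows compute $H_\bullet(\Koszul(R))$ and whose columns compute $\Tor^R_\bullet(\Koszul(R)_q, M) = 0$ for $\bullet > 0$), together with the already-proven degree estimates in Proposition~\ref{pr:cd1}, which are themselves uniform. So the main obstacle is essentially organizational rather than mathematical: assembling the two prior propositions through the spectral sequence with the correct shifts in grading.
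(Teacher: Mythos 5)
Your proposal is correct and follows essentially the same route as the paper: the paper also invokes the K\"unneth spectral sequence $\Tor^R_i(H_{q-i}(\Koszul(R)),M) \Rightarrow H_q(\Koszul(M))$ and bounds the $E^2$ terms by combining Proposition~\ref{pr:cd1} with Proposition~\ref{pr:K(r)}. Your degree bookkeeping (using $A_3 \ge 1$ to absorb the shift) matches the intended argument, and the missing parenthesis you flag is indeed just a typo in the statement.
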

\begin{proof} 
 We note that $\Koszul(M) = \Koszul(R) \otimes_R M$; then the ``universal coefficients'' spectral sequence: 
\beq
\Tor^R_i(H_{q-i}(\Koszul(R)), M) \Rightarrow H_q(\Koszul(M)).
\eeq
shows that $H_q(\Koszul(M))$ is filtered by subquotients of $\Tor^R_i(H_{q-i}(\Koszul(R)),M)$;
the degree of this $\Tor$-group is bounded by Proposition~\ref{pr:cd1} and Proposition ~\ref{pr:K(r)}:
$$ \deg \Tor^R_i(H_{q-i}(\Koszul(R)),M) \leq  \max(\deg H_0(M), \deg H_1(M)) + (A_2+q-i) +  A_1 i + 4A_1. $$

 This gives the result. 
\end{proof}

\subsection{Proof of Theorem \ref{Koszulbound}}
The last sentence has already been proved (Proposition \ref{pr:K(r)}).

We are going to show that
\begin{equation} \label{boundbykoszul} \deg H_i(M) \leq \deg H_i(\Koszul(M)) \ \ (i=0,1);\end{equation}
then (recall that $h_i = \deg H_i(\Koszul(M))$) the bound $h_q \leq \max(h_0, h_1) + (5A_1 +A_2) q$ 
follows from Proposition \ref{Koszulbound2}. Since we are taking $A_0 = 5A_1 + A_2$ (see \eqref{constants}) 
this will prove \eqref{theorem-equation} in the Theorem. 
  As for the the assertion about $M \stackrel{U}{\ra} M$, 
 Lemma
\ref{opm2lem} asserts that is an isomorphism in source degree 
at least $\delta(M) + A_1+1$; applying Lemma \ref{DLbound} and
\eqref{boundbykoszul} that number is 
  $$ \leq \max(h_0, h_1) + 4 A_1 + A_1+1.$$
Since $A_0 \geq 5 A_1 + 1$, 
we have proved that $M \stackrel{U}{\ra} M$ is an isomorphism in source degree at least
$\max(h_0, h_1) + A_0$ as desired.

Therefore, it remains to verify \eqref{boundbykoszul}.

The case $i=0$ of \eqref{boundbykoszul} follows
from the fact that  $H_0(M) = H_0(\mathcal{K}(M))$.
 
For the $H_1$ inequality, we will  factor the   map
$k[c] \otimes M[1] \rightarrow M$
from the final terms of the Koszul complex
\eqref{koszuldef} as follows:
$$k[c] \otimes_{k} M[1]  \stackrel{\alpha}{\rightarrow} R_{>0} \otimes_R M  \stackrel{\beta}{\rightarrow} M,$$
where the  first map  $\alpha$ sends $g \otimes m$ to $r_g \otimes m$ for $g \in c$, and
 the second map $\beta$ sends $i \otimes m$ to $im$.   Now $\alpha$ is surjective since elements $r_g$ generate $R_{>0}$ as a $R$-module. 
Also, $\alpha$ is degree-preserving
because the degree of $g \otimes m$ in $k[c] \otimes_R M[1]$ equals  $\deg(m)+1$
whereas the degree of $r_g \otimes m$ in $R_{>0} \otimes_R M$ also equals $\deg(m)+1$.

Let $\mathfrak{k}$ be the kernel of $ \beta \circ \alpha :k[c] \otimes  M[1] \rightarrow M$. Then there is a sequence
\beq
k[c^2] \otimes_{k} M[2] \stackrel{d }{\rightarrow} \mathfrak{k} \ra H_1(\mathcal{K}(M)) \ra 0
\eeq
which is exact at the middle and on the right.  Here $d$ is the differential from
the $\mathcal{K}$-complex  \eqref{koszuldef}. 

 It follows that $\mathfrak{k}$
is   generated as $k$-vector space by the image of $k[c^2] \otimes_{k} M[2] \stackrel{d }{\rightarrow} k[c] \otimes_{k} M[1]$, together
with terms in degree at most $\deg H_1(\mathcal{K}(M))$.  

But the composite map
$$ k[c^2] \otimes_{k} M[2] \stackrel{d}{\rightarrow} k[c] \otimes_{k} M[1] \stackrel{\alpha}{\rightarrow} R_{>0} \otimes_R M $$
is {\em zero}: it sends $(g_1, g_2) \otimes m$ 
first to $g_1 \otimes r(g_2) m - g_2 \otimes r(g_1^{g_2}) m$,
and then to $\left( r(g_1)r(g_2)  - r(g_2) r(g_1^{g_2}) \right) \otimes m$, 
which is zero in $R_{>0} \otimes_R M$.

Therefore  
$\alpha(\mathfrak{k})$ is spanned, as $k$-vector space
  by elements of degree $\leq \deg H_1(\mathcal{K}(M))$;
since $\alpha$  was surjective, that means that 
  $\ker(R_{>0} \otimes_R M \rightarrow M)$ is    supported in degree $\leq \deg H_1(\mathcal{K}(M))$.

But by tensoring the exact sequence $R_{>0} \rightarrow R \rightarrow k$
with $M$, we find an isomorphism $H_1(M) \simeq 
  \ker(R_{>0} \otimes_R M \rightarrow M)$. 
So we have shown 
 $\deg H_1(M) \leq \deg H_1(\mathcal{K}(M))$. This is the case $i=1$ of \eqref{boundbykoszul}. 
\qed

\section{The arc complex} \label{arc-complex}

In this section, we shall prove, as previously promised, that the homology of Hurwitz spaces
can be computed in terms of $\mathcal{K}$-complexes formed from homology of {\em smaller} 
Hurwitz spaces. 

Define the graded left $R$-module  
\begin{equation} \label{Mgluedef} 
M_p = \bigoplus_n  H_p(\Hur_{G,n}^c,k);\end{equation} 
the $R$-module structure arises from the gluing on Hurwitz spaces (\S \ref{gluing}),
and the grading is in the $n$-variable.    

\begin{prop} \label{hom-prop} 
There exists a homological spectral sequence $E^1_{qp}$ converging to $H_{q+p}(\Hur_{G,n}^c, k)$ in dimensions $q+p < n-2$.  Moreover, each row $(E^{1}_{*p}, d_1)$ is isomorphic
to the $n$th graded piece of $\Koszul(M_p)$, that is to say:
$$E^1_{qp} =\mbox{$n$th graded piece of } \Koszul(M_p)_{q+1} , \ \ p,q \geq 0, \ \ p+q<n-2.$$
\end{prop}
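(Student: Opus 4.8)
The plan is to build the spectral sequence as the one associated to a filtration of a simplicial (or semi-simplicial) resolution of $\Hur_{G,n}^c$ by an ``arc complex'': following the Hatcher--Wahl paradigm referenced in \S\ref{subsec:proofhs}, I would introduce, for a configuration of $n$ branch points in the disc $D$ together with a choice of non-intersecting arcs from the basepoint $*$ to the branch points, a semi-simplicial space $\arc_\bullet$ whose space of $q$-simplices parametrizes a branch point configuration equipped with $q+1$ disjoint embedded arcs joining $*$ to $q+1$ of the branch points, cyclically ordered near $*$. This arc complex should be highly connected --- connected through dimension roughly $n-2$ --- and this is the geometric input I would quote (or prove by the standard ``badness of arcs'' / link argument, exactly as in Harer's work on moduli of curves, which is cited as the relevant model). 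The augmented complex $\arc_\bullet \to \Hur_{G,n}^c$ then yields a spectral sequence
\begin{equation}
E^1_{qp} = H_p(\arc_q) \Rightarrow H_{q+p}(\Hur_{G,n}^c, k),
\end{equation}
valid in total degree $q+p < n-2$ because of the connectivity bound on $\arc_\bullet$; this is the first assertion.

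\textbf{Identifying the $E^1$-page.} The next step is to compute each $\arc_q$ up to homotopy. A single arc from $*$ to a branch point, when ``slid'' back to $*$, lets one read off the local monodromy $g_0 \in c$ around that point; cutting the disc along the $q+1$ chosen arcs decomposes the Hurwitz datum into the monodromy tuple $(g_0, \dots, g_q) \in c^{q+1}$ of the arced points together with a residual Hurwitz space on the remaining $n - (q+1)$ branch points (with a shifted basepoint). Concretely this should give a homotopy equivalence
\begin{equation}
\arc_q \;\simeq\; \coprod_{(g_0,\dots,g_q) \in c^{q+1}} \Hur^c_{G,\, n-q-1},
\end{equation}
so that $H_p(\arc_q) = \bigl(M_p^{c^{q+1}}\bigr)_{n-q-1} = \bigl(\Koszul(M_p)_{q+1}\bigr)_n$, matching the grading shift $[q]$ in the definition \eqref{koszuldef} (the total degree is $n = (n-q-1) + (q+1)$). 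I would verify that under this identification the simplicial face maps --- deleting the $i$th arc and re-gluing --- induce precisely the maps $r(g_i^{g_{i+1}\cdots g_q})$ applied to the $\Hur$-factor: deleting an interior arc forces the corresponding branch point back into the residual configuration, and conjugation by the product $g_{i+1}\cdots g_q$ records the braiding needed to carry that point past the arcs that lie ``after'' it in the cyclic order, with the boundary monodromy of the residual piece accounting for the $\partial s$ bookkeeping seen in the proof of Lemma~\ref{le:koszulhomologyispuny}. Summed with signs, the $d_1$ differential is then exactly the $\mathcal{K}$-complex differential, giving the row identification $E^1_{*p} \cong \Koszul(M_p)_n$.

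\textbf{Main obstacle.} The genuinely delicate point --- and the one I expect to consume most of the argument --- is the high-connectivity of the arc complex $\arc_\bullet$ with the sharp bound roughly $n-2$, since the whole spectral sequence, and hence the inductive stability argument, is only useful in that range. This requires a careful combinatorial/topological analysis of the poset of isotopy classes of disjoint arc systems in the punctured disc (in the spirit of Harer's and Wahl's connectivity proofs): one typically fixes one ``good'' arc, considers the link of a simplex, and runs a nested induction, and making the numerics come out to $n-2$ rather than something weaker needs the argument to be run with some care. A secondary but real subtlety is checking that the face maps are well-defined up to the homotopies present in the definition of the Hurwitz spaces and of the gluing maps of \S\ref{gluing} (everything is only defined up to homotopy), so that the identification of the rows with $\Koszul(M_p)$ is compatible with the $R$-module structure; I would handle this by working with the explicit $EB_n \times_{B_n} c^n$ models and the inclusions $B_{q+1} \times B_{n-q-1} \hookrightarrow B_n$, where the braiding action \eqref{braiding-action-def} makes the conjugation factors transparent.
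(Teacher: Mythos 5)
Your proposal follows essentially the same route as the paper: a highly connected arc complex (the paper builds it first combinatorially as cosets $B_n/H_q$ and then identifies it $B_n$-equivariantly with the geometric arc complex, quoting the $(n-2)$-connectivity from Hatcher--Wahl rather than reproving it), the skeletal-filtration spectral sequence for the Borel construction, and identification of $E^1_{qp}$ with the graded pieces of $\Koszul(M_p)$ by cutting along arcs and tracking the braiding conjugation in the face maps, exactly as you describe. The "main obstacle" you flag is thus discharged by citation in the paper, and your identification of the rows (up to the harmless $[q]$ vs.\ $[q+1]$ bookkeeping) matches the paper's computation of $d_1$ as the alternating sum of the maps $[s_i^{-1}]$.
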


\subsection{}
This spectral sequence arises (in a similar way to \cite{Harer, HatcherWahl}) by considering the action of the braid group $B_n$ on (the geometric realization of) a highly connected simplicial complex $\mathcal{A}$ (a variant of the \emph{arc complex}), and filtering the complex by the dimension of simplices. 
We give the geometric construction of $\mathcal{A}$ in  \S \ref{arccomplex}.   

In the present
subsection, we give a combinatorial model of $\mathcal{A}$. This will be helpful for proofs. Namely, we construct a semisimplicial set $\mathbf{A}$ (i.e., we describe a set $\mathbf{A}_q$ of $q$-simplices for each $q$ and give consistent face maps $\partial_i: \mathbf{A}_q \to \mathbf{A}_{q-1}$), and it will follow from Proposition \ref{hw_prop} that in fact the geometric realization of $\mathbf{A}$ is   homeomorphic to that of $\mathcal{A}$.

Fix $n$. Let $L_q$ be the subgroup of $B_n$ (presented as in \eqref{artin-braid}) generated by 
$\sigma_{q+2}, \dots, \sigma_{n-1}$; it is abstractly isomorphic, then, to $B_{n - q-1}$. Note that if $q \geq n-2$ we understand $L_q$ to be the trivial group.  Let $\mathbf{A}_q = B_n/L_q$ (as a $B_n$-set).  Define the faces of the $q$-simplex $b L_q$ by the formula  
$$\partial_i(bL_q) = b s_{q,i} L_{q-1}, \; \; 0 \leq i \leq q,$$ 
 where $s_{q, i} = \sigma_{i+1} \sigma_{i+2} \cdots \sigma_q$, and we interpret $s_{q, q} = 1$.  Note that since $L_q$ actually commutes with $\sigma_j $ for $j \leq q$, this is independent of the choice of $b$ representing the coset $b L_q$.  

\begin{prop}

$\mathbf{A}$ is a semisimplicial set; that is, the semisimplicial identity $\partial_i \partial_j = \partial_{j-1} \partial_i$ holds for $i<j$.

\end{prop}

\begin{proof}

This is a computation in the braid group.  For $j=q$ it is the identity $b s_{q-1,i} L_{q-2} =b s_{q,i}  L_{q-2}$, which follows because $s_{q,i} = s_{q-1, i} \sigma_q$ and $\sigma_q \in L_{q-2}$.  For $j < q$:
\begin{eqnarray*}
\partial_i \partial_j (bL_q) & = & b \sigma_{j+1} \cdots \sigma_{q} \sigma_{i+1} \cdots \sigma_{q} L_{q-2} \\
 & = & b  \sigma_{i+1} \cdots \sigma_{j-1} \sigma_{j+1} \cdots \sigma_{q} \sigma_{j} \cdots \sigma_{q} L_{q-2} \\
 & = & b  \sigma_{i+1} \cdots \sigma_{j-1} \sigma_{j+1}\sigma_j \sigma_{j+2} \cdots \sigma_{q} \sigma_{j+1} \cdots \sigma_{q} L_{q-2} \\
 & =_{(a)} & b  \sigma_{i+1} \cdots \sigma_{j+1} \sigma_{j} \sigma_{j+1}^{-1} \sigma_{j+2} \cdots \sigma_{q} \sigma_{j+1} \cdots \sigma_{q} L_{q-2}. %
\end{eqnarray*}
where at step (a) the braid relation was used in the form $\sigma_{j+1} \sigma_j = (\sigma_j \sigma_{j+1} \sigma_j) \sigma_{j+1}^{-1}$.
Define, for $m=j, \dots, q-1$ the element
$$x_m := \sigma_{i+1}  \cdots \sigma_{m+1} \sigma_{j} \cdots \sigma_m \sigma_{m+1}^{-1} \underbrace{ \sigma_{m+2} \cdots \sigma_{q}} \sigma_{m+1} \cdots \sigma_{q}.$$
where the underbraced product is understood to be empty in the case when $m=q-1$.
The defining relations in the braid group yield the recursion $x_m = x_{m+1}$, so
$$\partial_i \partial_j (bL_q) = b x_j L_{q-2} = bx_{q-1} L_{q-2} =  b  \sigma_{i+1} \cdots \sigma_{q} \sigma_{j} \cdots \sigma_{q-1} L_{q-2} = \partial_{j-1} \partial_i (bL_q)$$
as desired.

\end{proof}

It may be helpful to note that the set of vertices of the $q$-simplex $b L_q$  is then given by $$b L_0, b \sigma_1 L_0, b \sigma_2 \sigma_1 L_0, \dots, b \sigma_q \dots \sigma_1 L_0.$$
Here, the $j$th vertex is obtained from the iterated face map 
$$b \sigma_j \dots \sigma_1 L_0 = \partial_0 \circ \dots \circ \partial_{j-1} \partial_{j+1} \circ \dots \circ \partial_q(bL_q).$$
One can check that the lone vertex missing in the face $\partial_j(bL_q) = b s_{q,j} L_{q-1}$ is indeed $b \sigma_j \dots \sigma_1 L_0.$

 \begin{prop*}  The geometric realization of $\mathbf{A}$ is $(n-2)$-connected. \end{prop*}
This  is a combinatorial reformulation of a  result of Hatcher-Wahl. We give the proof in \S \ref{arccomplex}.

Now consider the complex $\mathbf{A} \times c^n$, considered with the product $B_n$-action. 
Recall that for any topological space $Z$ endowed with a $B_n$-action, we write $Z\modmod B_n$ for the Borel construction $EB_n \times_{B_n} Z$; the homology of this space is the $B_n$-equivariant homology of $Z$.

It follows from the Proposition that the natural map:
$$  H_{p}((\mathbf{A} \times c^n)\modmod B_n) \ra   H_p(c^n\modmod B_n)  = H_p(\Hur_{G,n}^c) $$
is an isomorphism in degrees $p< n-2$.  
Note further that 
$$H_* ((\mathbf{A}_q \times c^n) \modmod  B_n) \cong H_*(c^n \modmod L_{q}) \cong H_*(c^{q+1} \times \Hur_{G, n-q-1}^c),$$
so $H_p((\mathbf{A}_q \times c^n) \modmod  B_n)$ is identified with $\Koszul(M_p)_{q+1}$.
 
Now, $(\mathbf{A} \times c^n)\modmod B_n$ is filtered by the simplicial structure on $\mathbf{A}$.  The resulting spectral sequence is of the form: 
$$H_p((\mathbf{A}_q \times c^n) \modmod  B_n) = E^1_{qp} \implies H_{p+q}((\mathbf{A} \times c^n)\modmod B_n),$$
and the target is isomorphic to $H_{p+q}(\Hur_{G, n}^c)$ when $p+q< n-2.$
 
\begin{lem} The first differential $d_1$ in the spectral sequence $E_{qp}^1 = H_p(c^n \modmod L_{q})$ is the alternating sum  
$$d_1 = \sum_{i=0}^q (-1)^i \partial_i = \sum_{i=0}^q (-1)^i [s_{q, i}^{-1}]$$ 
of the maps $[s_{q,i}^{-1}]: H_*(c^n \modmod L_{q}) \rightarrow H_*(c^n \modmod L_{q-1})$ induced by
$s_{q,i}^{-1}: c^n \rightarrow c^n$:
$$ s_{q,i}^{-1} : (g_0, \dots, g_q, g_{q+1}, \dots, g_{n-1})  \mapsto 
(g_0, \dots, \widehat{g_i}, g_{i+1}, \dots, g_{q}, g_i^{g_{i+1} \dots g_q}, g_{q+1}, \dots, g_{n-1}).$$
\end{lem}

Compare with \eqref{koszuldef}  to get Proposition \ref{hom-prop}. 

\begin{proof}

The face maps in the semi-simplicial space $EB_n \times_{B_n} (\mathbf{A} \times c^n)$ are induced by those in $\mathbf{A}$, as above.  Identifying $\mathbf{A}_q = B_n/ L_q$, this becomes $\partial_i: EB_n \times_{L_q} c^n \to EB_n \times_{L_{q-1}} c^n$ given by $$\partial_i(e, \mathbf{g}) = (es_{q,i} , \ s_{q,i}^{-1} \mathbf{g}),$$  where $0 \leq i \leq q$ and $\mathbf{g} \in c^n$.  

Since $L_q$ is a subgroup of $B_n$, we have a natural identification $H_*(c^n \modmod L_{q}) = H_*(EB_n \times_{L_q} c^n)$.  Since $s_{q, i}$ and $L_q$ commute, the maps $EB_n \times_{L_q} c^n \to EB_n \times_{L_{q-1}} c^n$
$$(e, \mathbf{g}) \mapsto (es_{q,i} , \ s_{q,i}^{-1} \mathbf{g}) \; \mbox{ and } \; (e, \mathbf{g}) \mapsto (e, \ s_{q,i}^{-1} \mathbf{g})$$
are freely homotopic, giving the result. 

\end{proof}

\subsection{The arc complex.}  \label{arccomplex}

We now prove that the geometric realisation of the complex $\mathbf{A}$ defined in the previous section is indeed $(n-2)$-connected, by identifying it with a geometric construction (the ``arc complex'') of Hatcher and Wahl.

Let $\Sigma$ be, as in  \S \ref{hur_sec}, an $n$-punctured disc.  By an {\em arc} on $\Sigma$ we mean a smooth path $\tau$ in $D$ from $*$ to one of the punctures $P_j$  with the following properties:
\begin{itemize}
\item[-]  $\tau$ is a smooth embedding $\tau: [0,1] \rightarrow D$ satisfying $\tau(0) = * \in \partial \Sigma$ and $\tau(1) = P_j$. 
\item[-] $0$ is the only element of the domain carried to the boundary of $\Sigma$, and $1$ the only element carried into the set $\{P_i\}$ of punctures. 
\item[-]
 The derivative $\tau'(0)$ is not  tangent to the boundary. 
  \end{itemize}

The \emph{arc complex} $\mathcal{A}$ is the simplicial complex whose vertex set consists of all isotopy classes of such arcs, and whose faces are collections of such isotopy classes which have representatives that intersect only at $*$, where they have distinct tangent vectors.   We denote by $\mathcal{A}_q$  the $q$-simplices of $\mathcal{A}$, i.e.  the set of isotopy classes of $(q+1)$-tuples of arcs intersecting only at $*$. 

We may define a partial ordering on the set of vertices: two arcs are comparable if they span an edge; then the ordering of the pair is given by the counterclockwise ordering of their tangent vectors at $*$.  This extends naturally to a total ordering on the set of vertices spanning a face; in this way, $\mathcal{A}$ becomes an ordered simplicial complex. 
This ordering equips the collection of sets $\mathcal{A}_*$ with the structure of a semisimplicial set: 
The face maps
$d_i: \mathcal{A}_q \to \mathcal{A}_{q-1} \ ( i =0, \dots, q)$ are defined by
\beq
d_i(\gamma_0, \dots, \gamma_q) = (\gamma_0, \dots, \widehat{\gamma_i}, \dots, \gamma_q).
\eeq
where $\gamma_0, \dots, \gamma_q$ are arranged in increasing order. 
In other words, the semisimplicial structure is given by deletion of arcs.

\begin{prop}  \label{hw_prop}
$\mathcal{A}$ is $(n-2)$-connected.  There exists an action of $B_n$ on $\arc$ which is transitive on $q$-simplices for each $q$.  Moreover, $\arc$ and $\mathbf{A}$ are $B_n$-equivariantly isomorphic. 
\end{prop}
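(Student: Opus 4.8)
The plan is to prove the three assertions in order: connectivity of $\arc$, the existence of a simplex-transitive $B_n$-action, and the $B_n$-equivariant isomorphism $\arc \cong \mathbf{A}$. For the connectivity statement, I would not reprove it from scratch; instead I would identify $\arc$ with the arc complex studied by Hatcher and Wahl \cite{HatcherWahl}, whose ``$q$-simplices = isotopy classes of collections of disjoint (except at $*$) arcs from the basepoint to punctures'' description matches ours verbatim once one checks the punctures are all distinct endpoints are allowed to repeat only in the sense recorded by our vertex combinatorics. Their results give that this complex is $(n-2)$-connected, and in fact homotopy equivalent to a wedge of $(n-1)$-spheres; I would quote this, being careful to match conventions (the relevant surface is the $n$-punctured disc with one marked boundary point, and the arcs terminate at interior punctures, not on the boundary). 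If a self-contained argument is wanted, the standard route is a ``surgery on arcs'' / bad-simplices argument as in Hatcher's original papers, reducing the sphere of an arbitrary cycle to a subcomplex where disjointness is automatic; but I expect the paper simply cites \cite{HatcherWahl}.

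Next, the $B_n$-action. The braid group $B_n = \pi_1(\Conf_n D, c_n)$ acts on $\Sigma = D - \{P_1,\dots,P_n\}$ up to isotopy — more precisely $B_n$ is (a subgroup of) the mapping class group of $\Sigma$ fixing $*$ and the boundary — and hence acts on isotopy classes of arcs, preserving the disjointness condition and the simplicial structure, giving an action on $\arc$. For transitivity on $q$-simplices I would use the ``change of coordinates principle'': any two $(q+1)$-tuples of arcs that are disjoint except at $*$ have homeomorphic complements in $\Sigma$ (each cuts $\Sigma$ into a disc with $n-q-1$ punctures plus the right boundary data), so there is a homeomorphism of $\Sigma$ fixing $*$ and $\partial D$ carrying one to the other, and such a homeomorphism represents an element of $B_n$. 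This is the classical ``all systems of arcs of a fixed topological type are equivalent'' fact; I would state it and give the cut-open count rather than belabor it.

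Finally, the equivariant isomorphism $\arc \cong \mathbf{A}$. Here the point is to pin down a single base $q$-simplex and compute its stabilizer. Choose the ``standard'' $(q+1)$-tuple of arcs $(\gamma_0,\dots,\gamma_q)$ running straight to $P_1,\dots,P_{q+1}$ (compatibly with the generators $\gamma_i$ fixed in \S\ref{subsec:cd}), so that the ordered tuple of arcs corresponds to the identity coset. One checks: (i) the stabilizer in $B_n$ of this $q$-simplex is exactly the subgroup $H_q = \langle \sigma_{q+2},\dots,\sigma_{n-1}\rangle$ — a braid fixes the first $q+1$ arcs (up to isotopy, and as an unordered simplex) iff it is supported in the complementary punctured subdisc, which is generated by those $\sigma_i$ — and (ii) the face maps match: deleting the $i$-th arc from $b\cdot(\gamma_0,\dots,\gamma_q)$ lands in the coset $b s_i H_{q-1}$ with $s_i = \sigma_{i+1}\cdots\sigma_q$ as in the combinatorial definition of $\mathbf A$, because re-indexing the remaining $q$ arcs back into ``standard position'' is exactly the braid $s_i$. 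Thus sending $bH_q \in \mathbf A_q$ to $b\cdot(\gamma_0,\dots,\gamma_q) \in \arc_q$ is a well-defined, bijective, $B_n$-equivariant, face-map-compatible simplicial map, hence the desired isomorphism; combined with the first assertion this proves the Hatcher–Wahl Proposition that $\mathbf A$ is $(n-2)$-connected.

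The main obstacle is step (i): carefully verifying that the stabilizer of the standard base simplex is precisely $H_q$ and not something larger, i.e. that a braid fixing each of the first $q+1$ arcs up to isotopy (and fixing $*$, $\partial D$) must be isotopic to one supported away from those arcs. This is where the geometry really does work — it rests on the Alexander method (a mapping class fixing a filling-enough system of arcs is trivial) applied to the cut-open surface — and getting the indexing of the $\sigma_i$ and the faces $s_i$ to line up with \eqref{artin-braid} and the combinatorial $\mathbf A$ without off-by-one errors is the delicate bookkeeping. Everything else (the connectivity citation, the change-of-coordinates transitivity) is standard surface topology.
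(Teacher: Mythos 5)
Your proposal is correct and follows essentially the same route as the paper: the connectivity is quoted from Hatcher--Wahl, transitivity is obtained by straightening an arbitrary arc system onto the standard linear one via a diffeomorphism of the disc (using the isomorphism $B_n \cong \Gamma(\Sigma)$), and the equivariant isomorphism comes from $bH_q \mapsto b\,v_q$ after computing the stabilizer of the standard simplex. Your step (i) is exactly the paper's argument that $\mathrm{stab}(v_q)$ is the mapping class group of $\Sigma$ cut along the standard arcs, i.e.\ of a disc with $n-q-1$ punctures, which corresponds to $H_q$ under the identification of $B_n$ with $\Gamma(\Sigma)$.
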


\proof 

The connectivity assertion is a special case of Proposition 7.2 (and Definition 3.4) of Hatcher and Wahl \cite{HatcherWahl}. 
(In fact, $\mathcal{A}$ is contractible -- we refer the reader to the recent \cite{Damiolini} for a careful proof of this fact.)
Let  $\Diff_n$ be the group of orientation-preserving diffeomorphisms of $D$ which fix  the boundary pointwise and fix the set of punctures (setwise).
Then $\Diff_n$ acts on the set of isotopy classes of arcs on $\Sigma$ (i.e., the vertices of $\mathcal{A}$).  In fact, $\Diff_n$ acts on the whole of the complex $\mathcal{A}$, since diffeomorphisms preserve the non-intersection condition.  This evidently descends to an action of $B_n = \pi_0(\Diff_n)$ on $\mathcal{A}$. 

Now the  transitivity of $B_n$, as well as the final assertion -- which amounts to a computation of
the stabilizer of a simplex, see \eqref{eqvtisoRufus} below -- are very similar to \cite[Proposition 2.2]{Wahl}, which proves precisely the same result
in the context that the {\em endpoints} of all the arcs also coincide.   The proof of \cite[Proposition 2.2]{Wahl} also applies in this context.
For completeness, we  will  recall the main steps of this proof below.

\medskip

{\em Transitivity:} 
  We will be particularly brief about transitivity, since it is the easier part.
Given two $q$-simplices $\overline{\tau}$ and $\overline{\tau}'$, we choose systems of $q+1$ non-intersecting arcs $(\tau_0, \dots, \tau_q)$
and $(\tau_0', \dots, \tau_q')$ representing them.   While the proof of \cite[Proposition 2.2]{Wahl} transcribes
almost verbatim to this context,  we will sketch an alternate direct argument:  

Replacing the $\tau_i, \tau_i'$ by isotopic curves, and using the fact that the ordering of $\tau_i, \tau_i'$ coincide near $*$,
we may suppose that  all  $\tau_i$ and $\tau_i'$ are linear in a small neighbourhood of $0$  (i.e., straight near $*$), 
and that  $\tau_i(t) = \tau_i'(t)$ for $0 \leq t \leq   \varepsilon$ and some $\varepsilon  > 0$.  
Applying a suitable diffeomorphism, and using the fact that the braid group $B_n$ surjects to $S_n$, we may   assume that $\tau_i(1) = \tau_i'(1)$ for each $i$. 
Fix a smooth  increasing function $h$ on $[0,1]$ such that $h(t) = t $ for $ t \leq 2 \varepsilon/3$
and $h(1) = \varepsilon$.  Then $\tau_i \circ h = \tau_i' \circ h$. 

It is now sufficient
to show that there exists a ``arc-retracting'' diffeomorphism, i.e. a diffeomorphism $F$ which carries $\tau_i$
to the arc $\tau_i \circ h$  while fixing all punctures $P_i$ which are not of the form $\tau_i(1) \ \ (0 \leq i \leq q)$.
Once this is done, and a similar diffeomorphism $F'$ constructed for the $\tau_i'$, then $F' \circ F^{-1}$ gives the desired diffeomorphism
carrying $\tau_i$ to $\tau_i'$ and fixing all remaining punctures. 
  Let $R$ be the rectangle
$[\varepsilon/2, 1.01] \times [-1,1] $. Choosing a normal vector field to each $\tau_i$,  
we may find a collection of embeddings $g_i:  R\rightarrow D$,
 carrying $(t, 0)$ to $\tau_i(t)$ for $\frac{\varepsilon}{2} \leq t \leq 1$,  and such that the $g_i(R)$ are pairwise disjoint as well as disjoint from all other punctures -- in other words,
  $g_i(R)$ is an  explicit tubular neighbourhood of $\tau_i|_{[\varepsilon/2, 1]}$.   Our assertion is reduced to the claim that 
 there exists a diffeomorphism $G: R \rightarrow R$, trivial in a neighbourhood of the boundary, 
 and carrying  $(x,0)$ to $(h(x), 0)$ for
 $\varepsilon/2 \leq x \leq 1$.   It is routine to write down such a diffeomorphism explicitly.

  \medskip
  
  {\em Computation of stabilizer:} 
Order the punctures $P_1, \dots, P_n$ in such a way that the straight line segments $[*, P_i]$ from $*$ to $P_i$ are in counter-clockwise order around $*$. The {\em standard} $q$-simplex $v_q$ is the one consisting of linear arcs from $*$ to $P_1, \dots, P_{q+1}$.  We claim that the stabilizer of $v_q$ in $B_n$ is $L_q$.

Granting that, we define a $B_n$-equivariant semisimplicial map $\mathbf{A} \to \mathcal{A}$ on $q$-simplices by the bijection
\begin{equation} \label{eqvtisoRufus} B_n/L_q \to \mathcal{A}_q \; \mbox{ given by } \; b \mapsto b\cdot v_q.\end{equation} 
To see that this is semisimplicial, use the fact that $s_{q, i} v_{q-1} = d_i(v_q)$ (both consist of the $q$ straight line segments from $*$ to $P_1, \dots, \widehat{P_{i+1}}, \dots , P_{q+1}$).  The Proposition then follows. 

Therefore it remains only to check that the stabilizer of $v_q$ in $B_n$ is $L_q$.   Now, $L_q$ is generated by the Dehn twists $\sigma_i$ ($i \geq q+2$) which involve only the punctures $P_i$ and $P_{i+1}$, so it is apparent that $L_q$ does in fact stabilize $v_q$.   Additionally, there is a map from the mapping class group $$\Mod(\Sigma \setminus v_q) \to \Mod(\Sigma) = B_n$$ which extends diffeomorphisms by the identity on a neighborhood of $v_q$.   Write $\Sigma'$ for the complement of $n-q-1$ points in $D$. Then $\Mod(\Sigma')$ is isomorphic to $B_{n-q-1}$.  Picking a diffeomorphism $\Sigma' \to \Sigma \setminus v_q$ defines an isomorphism $B_{n-q-1} \cong \Mod(\Sigma \setminus v_q)$.  In fact, we may choose this diffeomorphism so that the composition 
$$B_{n-q-1} = \Mod(\Sigma') \cong \Mod(\Sigma \setminus v_q) \to \Mod(\Sigma) = B_n$$
carries $\sigma_j \in B_{n-q-1}$ to $\sigma_{j+q+1} \in B_n$.  The image of this map is precisely $L_q$.  

It therefore suffices to show that if $b \in B_n$ stabilizes $v_q$, then $b$ is isotopic to a diffeomorphism which is in the image of this map $\Mod(\Sigma \setminus v_q) \to \Mod(\Sigma)$.  
This can be done as in   \cite[p. 552-553]{Wahl}: Take a diffeomorphism $\phi$ representing $b$. {\em A priori} $b$ fixes only the isotopy class of each arc in $v_q$.
We may replace $\phi$ by an isotopic $\phi'$  that fixes the arcs in $v_q$ pointwise, inductively using the isotopy extension theorem \cite{palais}.  We briefly summarize the key point of the proofs of the proof in our setting, closely following \cite{Wahl} but indicating the minor differences:

A direct application of the isotopy extension theorem implies that $\phi$ is isotopic to a diffeomorphism that fixes the (linear) arc $a_1$ from $*$ to $P_1$. 
Replace $\phi$ by this isotopic diffeomorphism. Now the arc $a_2$ is isotopic to $\phi(a_2)$, by assumption, but the image of this isotopy
need not be disjoint from $\phi(a_1)=a_1$, so one cannot simply proceed as in the first step.    Let $H: [0,1] \times [0,1] \rightarrow D$ be an isotopy,
so that $H(0, -)$ and $H(1, -)$ correspond to the arcs $a_2$ and $\phi(a_2)$ respectively, and $H(-, 0) = *, H(-, 1)=P_2$. Adjust $H$ to be tranverse to $a_1$.  The preimage $H^{-1}(a_1)$ of the arc $a_1$
under $H$ is a union of circles and intervals.   Note that these components intersect the boundary of $[0,1] \times [0,1]$ only along $[0,1] \times 0$.  This is a difference from the setting of \cite{Wahl}, and means that we can ignore the case of a ``non-empty union of intervals'' mentioned at the bottom of page 552, {\em loc. cit.}.  In the present setting, every component of $H^{-1}(a_1)$ other than $[0,1] \times 0$ is contained in the interior of $[0,1] \times [0,1]$ and is thus a circle.   So each component of $H^{-1}(a_1)$ bounds a disk in $[0,1] \times [0, 1]$; restricting $H$ to this disk defines an element of 
the relative homotopy group $\pi_2(\Sigma, a_1)$,   and the triviality of that group implies
that we can assume that $H$ can be replaced by $H'$ for which $(H')^{-1}(a_1) =(H')^{-1}(*)$. At this point, we can again use the isotopy extension theorem
to ensure that $\phi$ fixes $a_2$, and we proceed inductively. 

 \qed

\section{Homological stability for Hurwitz spaces}
 
 We now prove the main theorem of the paper, that the homology of Hurwitz spaces
 stabilize under the non-splitting condition; it is by now an easy consequence of the main
 results of the prior three sections.

\begin{thm} Suppose $(G,c)$ satisfies the non-splitting condition, and let $k$ be a field in which $|G|$ is invertible.
Then there exist constants $A,B$ (depending on $G$) such that the map 
\begin{equation} \label{firstassertion} 
U: H_p(\Hur_{G,n}^c,k) \ra H_p(\Hur_{G,n+\deg U}^c,k)
\end{equation}
(see   Lemma \ref{le:rurfinite} and start of \S \ref{Rhomology} for definition of $U$) is an isomorphism whenever $n > Ap + B$.

The same assertion holds  for the restricted maps   
\begin{equation} \label{secondassertion} U: H_p(\CHur_{G,n}^c,k) \ra H_p(\CHur_{G,n+\deg U}^c,k).\end{equation} 
\label{th:stability}
\end{thm}

 Our proof will actually give a range $n > A'p +B'$ with different constants for the restricted maps \eqref{secondassertion} of the second assertion, 
 but one can then simply replace $A$ by $\max(A, A')$ and similarly for $B$. 

\begin{proof}
 
Let $M_p$ be the graded $R$-module corresponding to the $p$th homology of Hurwitz spaces, taken with $k$-coefficients,  as defined
in \eqref{Mgluedef}.  %
We prove, by increasing induction on $p$, that  for all $q \geq 0$ we have
$$
\deg  H_q( \Koszul(M_p) ) \leq A_2 + A_0(3p+q)
$$
from which the result follows, for suitable $B$, by the second assertion in Theorem \ref{Koszulbound}.
For $p=0$, we have $M_0 = R$; the inductive assumption follows in this case from Proposition \ref{pr:K(r)}, since
\beq
A_2 + A_0(3p+q) = A_2 + A_0 q \geq A_2 + q.
\eeq
in this case.

Now suppose the statement holds for $p < P$.  Consider
the leftmost part of $\Koszul(M_P)$, i.e. \begin{equation} \label{star} M_P \stackrel{e}{\leftarrow} k[c] \otimes M_P[1]  \stackrel{f}{\leftarrow} k [c^2] \otimes M_P[2] 
.\end{equation} 
The map $e$ is an edge morphism in the spectral sequence of Proposition \ref{hom-prop}, whereas
$f$ is identified with the differential $d_1: E^1_{1P} \ra E^1_{0P}$. 
  More generally, in the spectral sequence for $\Hur_{G, n}^c$, we have for each $q > 0$ that $E^2_{q, p}$ is the $n$th graded piece of $H_{q+1}(\Koszul(M_p))$.

The inductive hypothesis
  implies that \eqref{star}
is exact at the middle and  left term in degrees greater than
\beq
A_2 + 3 A_0 P.
\eeq
To see this, we note that the inductive hypothesis ensures that for $j>1$, $E^2_{j, P+1-j}$, or in other words the $n$th graded piece of $H_{j+1}(\Koszul(M_{P+1-j}))$, vanishes in degrees above
\begin{equation} \label{debug1} 
  A_2 + A_0(3P+4-2j).
\end{equation} 
Thus, once $n > A_2 + 3 A_0 P$, there are no differentials $d_j$ for any $j>1$ going into or out of $E^2_{0P}$.  Thus, $E^2_{0P} = E^{\infty}_{0P}$ in the spectral sequence for $H_P(\Hur_{G,n}^c,k)$ in the range $n > A_2 + 3 A_0 P$. 

Further, for $j>0$, $E^2_{j, P-j} = 0$ vanishes in degrees above
\begin{equation}  \label{debug2}
A_2 + A_0(3P+1-2j).
\end{equation} 
So, once $n > A_2 + A_0(3P-1)$, we have that all the graded pieces of $H_P(\Hur_{G,n}^c,k)$ other than $E^{\infty}_{0P} = E^2_{0P}$ vanish.  Hence $e: \mathrm{coker}(f) = E^2_{0P} \rightarrow M_P$ is an isomorphism in degrees above $A_2 + 3 A_0 P$. 

In other terms, 
$$\deg H_0(\Koszul(M_P)), \ \deg H_1(\Koszul(M_P)) \leq A_2 + 3 A_0 p.$$

Now apply Theorem \ref{Koszulbound}; it implies that, for $q \geq 2$,   $H_q( \Koszul(M_P))$
vanishes in degrees strictly above $A_2 + 3A_0 p + A_0 q$, which is precisely the inductive hypothesis to be proved.

 Now we address the final assertion of the theorem -- namely, that the same result holds for the space of connected covers.
 
 For each subgroup $Q$ of $G$ which is generated by $c \cap Q$,    write $\Hur_{G,n}^{Q,c}$ for the union of connected components of $\Hur_{G,n}^c$
 whose global monodromy is exactly equal to $Q$ --   i.e.  arising from elements  $(x_1, \dots, x_n) \in c^n$ with $\langle x_1, \dots, x_n \rangle = Q$.
Note that we can identify $\Hur_{G,n}^{Q,c}$ with $\CHur_{Q,n}^{c \cap Q}$. 
 
 What we have proved so far in this theorem applies equally well with $G$ replaced by $Q$,
 for $(Q, c)$ is still non-splitting.  
 By increasing induction on the order of $|Q|$ -- and possibly increasing the constants $A, B$ --  we may suppose
 that 
 $$U_Q: H_p(\Hur_{G,n}^{Q,c},k) \rightarrow H_p(\Hur_{G,n+\deg(U)}^{Q,c},k)$$
 is an isomorphism for $n \geq A p +B$ for every proper subgroup $Q$. 
 Here $U_Q$ is defined similarly to $U$, but ``relative to $Q$,''
 that is to say $U_Q = \sum_{g \in c \cap Q} r_g^{D |g|}$. Note that
 on $H_p(\Hur_{G,n}^{Q, c})$ we have
 
 \begin{equation} \label{UUq} 
 U = \frac{|c|}{|c \cap Q|} U_Q \mbox{ modulo $\bigoplus_{Q' \supsetneq Q} H_p(\Hur_{G,n+\deg(U)}^{Q', c},k)$} 
 \end{equation}

 What we have already proved shows that $U: H_p(\Hur_{G,n}^{G,c},k) \rightarrow H_p(\Hur_{G, n+\deg(U)}^{G, c},k)$
is injective for $n \geq Ap +B$, so it remains to verify surjectivity in that same range. 
Take $x \in H_p(\Hur_{G,n +\deg(U)}^{G,c},k)$; we may 
 write $x=Uy$ for some $y \in H_p(\Hur_{G,n}^{c},k)$.
 Write $y = \sum_{Q} y_Q$ where $y_Q \in H_p(\Hur_{G,n}^{Q,c},k)$.  
 
 By an increasing induction on the size of $Q$, 
 we see that $y_Q  = 0$ if $Q \neq G$: This is obviously true when $|Q|=1$. 
 Next if $m  < |G|$ and we know the assertion 
  is true for all $|Q| < m <|G|$, then take $Q$ of size $m$.  By \eqref{UUq} we get
$$ 0 =  \mbox{$Q$-component of $Uy$} =  \frac{|c |}{|c \cap Q|} U_Q y_Q$$
and by inductive assumption $U_Q$ is an isomorphism.
Therefore $y_Q = 0$.

This induction on the size of $Q$ has shown that $y \in H_p(\Hur_{G,n}^{G,c},k)$, and we're done. 

 \end{proof}

In the arithmetic applications to follow, we will be concerned with the quotients $\Hur_{G,n}^c/G$ under the $G$-action introduced previously
(see page \pageref{Gactionpageref}, after Definition \ref{hur_defn}).  These spaces are easily seen to stabilize in homology as well.

\begin{cor}
Suppose $G$ is center-free, and $(G,c)$ satisfies the non-splitting condition.  Let $k,U,A,B$ be as in Theorem~\ref{th:stability}.

Then the map
\beq
U: H_p(\Hur_{G,n}^c/G,k) \ra H_p(\Hur_{G,n+\deg U}^c/G,k)
\eeq
is an isomorphism whenever $n > Ap + B$.    This restricts to an isomorphism $U: H_p(\CHur_{G,n}^c/G,k) \ra H_p(\CHur_{G,n+\deg U}^c/G,k)$ in the same range. 
\label{co:modgstability}
\end{cor}

\begin{proof}  Recall that the action of $G$ on $\Hur_{G, n}^c$ is induced by the $B_n$-equivariant action of $G$ on $c^n$ given by termwise conjugation.  This is evidently not free, so the same is true for the action of $G$ on $\Hur_{G, n}^c$.  However, it is apparent from this description that the stabilizers $G_x$ of points $x \in \Hur_{G, n}^c$ are locally constant.   Namely:  when $x$ lies on a component whose global monodromy is $H$, the stabilizer $G_x$ is the centralizer of $H$ in $G$.  In particular, since $G$ is center-free by hypothesis, the action of $G$ on $\CHur_{G.n}$ is free.  Thus the quotient $q: EG \times_G \Hur_{G, n}^c \to \Hur_{G, n}^c / G$ is a fibre bundle with fibre over the class of $x$ equivalent to $BG_x$.  Since the order of $G$ (and hence $G_x$) is invertible in $k$, $q$ is an isomorphism in $H_*(-, k)$, so $H_*(\Hur_{G, n}^c / G, k)$ is isomorphic to the $G$-coinvariants in $H_*(\Hur_{G, n}^c, k)$.

The operator $U$, considered as a class in $H_0(\Hur_{G,\deg U}^c)$, is fixed by the action of $G$ (at least if $U$ is chosen as in Lemma~\ref{le:rurfinite}).  It then follows from the $G$-equivariance of the gluing maps that the isomorphism in Theorem \ref{th:stability} is one of $k[G]$-modules.  Taking $G$-coinvariants on both sides yields the desired isomorphism. 

\end{proof}

\section{Homological stability for Hurwitz schemes} \label{log}

So far, we have considered Hurwitz spaces as topological spaces parametrizing continuous branched covers of the disc.  In order to apply our theorems to arithmetic questions, we need to identify those topological spaces with the complex points of moduli schemes  (``Hurwitz schemes'') defined over arithmetic bases.

  In order to apply the topological results of the first part of the paper to counting problems over finite fields, we will need to  show that the \'{e}tale cohomology of Hurwitz schemes is ``the same" in characteristic $0$ and characteristic $p$.  This would follow immediately if the Hurwitz schemes  were smooth and proper; since they are not proper, more work is required, 
  involving the use of a compactification whose boundary has normal crossings. 

 Throughout this section, $G$ denotes a finite group. In our  main application,  $G$ will be of the form $A \rtimes \Z/2\Z$, 
where $A$ is a nontrivial finite $\ell$-group, and $\Z/2\Z$ acts on $A$ by inversion.

 For the reader's convenience we summarize the various spaces  (more precisely, schemes over $\Spec(\Z)$ or schemes over $\Spec(\baseR), \baseR := \Z[\frac{1}{|G|}]$) to be introduced in the following table. For example, 
$\SConf_n$ denotes the configuration space of $n$ distinct points on $\P^1$, and $\PreHn_{G,n}$ will denote the Hurwitz space 
of tame $G$-covers of $\P^1$ (see \S \ref{hurnotn}) which are branched at $n$ points.   

\begin{tabular}{ |  c|   c|     c|}
\hline
space & configurations  of  ...   &     Hurwitz space of $G$-covers  \\ 
&                                                & with corresponding branching \\ 
\hline
$\SConf_n$   (\S \ref{hurnotn}) &  $n$ points on $\P^1$ & $\PreHn_{G,n}$  (\S \ref{hurnotn})    \\
$\AConf_n$  (\S \ref{hurnotn2}) &  $n$ points on $\A^1$ & $\Hn_{G,n}$  (\S \ref{hurnotn2})    \\
 $\SPConf_n$  (\S \ref{hurnotn3})  & $n$ {\em labelled} points on $\A^1$ & $\PHn_{G,n}$  (p.\pageref{PHNdef})     \\
 \hline 
\end{tabular}
\medskip

Note that a $G$-cover ``branched at $n$ points on $\A^1$'' may or may not be branched at $\infty$, i.e.
it may have either $n$ or $n+1$ branch points on $\P^1$, and the  associated Hurwitz spaces will be correspondingly disconnected.

  We write (e.g.) $X/\Spec \Z$ or simply $X/\Z$ to denote a scheme $X$ equipped with its canonical morphism to $\Spec(\Z)$. 
For such a scheme, we will denote by $X/\F_q$ the scheme $X \times_{\Spec(\Z)} \Spec(\F_q)$, together with the natural morphism
to $\Spec(\F_q)$.

\subsection{} \label{hurnotn}

Our basic reference for Hurwitz schemes is the paper of Romagny and Wewers~\cite{roma:wewersromagny}.

If $P_n/\Spec \Z$ is the projective $n$-space with coordinates $a_0:a_1:\ldots:a_n$, which we think of as parametrizing  binary forms $a_0 X^n + a_1 X^{n-1} W+ \ldots + a_n W^n$ up to scaling, we define $\SConf_n / \Spec \Z$ to be\footnote{ The prime in $\SConf_n$
is because it represents a configuration space for the {\em projective} line,  whereas our previous $\Conf_n$ is homotopy equivalent to a configuration space for 
the affine line.}
the open subscheme of $P_n$ where the discriminant $\Delta(\sum_{i=0}^n a_i X^{n-i} W^i)$ doesn't vanish.  Then, for any field $k$, $\SConf_n(k)$ is the set of squarefree $k$-rational degree-$n$ divisors on $\P^1$.

Let $k$ be a field.   Then by a {\em tame $G$-cover of $\P^1$ over $k$} we shall mean a triple $(X,f,\phi)$, where
\begin{itemize}
\item $X$ is a smooth proper geometrically connected curve $X/k$;
\item $f: X \ra \P^1$ is  {\em tame}:  that is, there exists a  reduced divisor $D$ on $\P^1$ such that $f$ is \'{e}tale over $\P^1 - D$, and such that the ramification of $f$ over each geometric point of $D$ is nontrivial and prime to the characteristic of $k$;
\item $f$ is {\em Galois}:  that is, $\Aut(f)$ acts transitively on the geometric fibers of $f$;  \item $\phi$ is an isomorphism from $G$ to $\Aut(f)$.
\end{itemize}

We say a tame $G$-cover has $n$ branch points if $n$ is  the degree of the   unique divisor $D$ satisfying the condition in the second item above, and we call such a $D$ the {\em branch locus} of $f$.

Romagny and Wewers~\cite[Theorem 4.11]{roma:wewersromagny} construct a scheme $\PreHn_{G,n}$ with the following properties:
\begin{itemize}
\item $\PreHn_{G,n}$ is a scheme over $\baseR = \mathbb{Z}[\frac{1}{|G|}]$, endowed with a finite \'{e}tale morphism  \beq
\pi: \PreHn_{G,n} \ra \SConf_n/ \Spec R
\eeq
\item For every algebraically closed field $k$ with characteristic prime to $|G|$, there is an  $\mathrm{Aut}(k)$-equivariant bijection between $\PreHn_{G,n}(k)$ and the set of isomorphism classes of tame $G$-covers of $\P^1$ over $k$ with $n$ branch points.  If $x \in \PreHn_{G,n}(k)$ corresponds to a $G$-cover $f$, the image of $x$ in $\SConf_n(k)$ is the point parametrizing the branch locus of $f$ in $\P^1$.  If $G$ is center-free, the two statements above hold for an arbitrary field $k$, not only algebraically closed fields.
\end{itemize}

In fact, Romagny and Wewers construct a Hurwitz scheme over $\Spec \Z$, but for our present purpose we only need the open subscheme lying over $\Spec R$.  

The above assertions are all included in the statement of \cite[Theorem 4.11, Corollary 4.12]{roma:wewersromagny}, with the exception of the finiteness of $\pi$, which is \cite[Remark 4.15 ii]{roma:wewersromagny}.  Because the proof is not given in full there, we explain it briefly.   Each geometric fiber of the map from $\PreHn_{G,n}$ to $\SConf_n/R$,  say above the section $s: \Spec k \ra \SConf_n/R$ with $k$ algebraically closed, is in bijection with the set of $G$-covers $f$ over $k$  with a fixed branch locus $D \subset \P^1$ parametrized by $s$.  This set of $G$-covers, in turn, is in bijection with the set of conjugacy classes of surjective homomorphisms from $\pi_1^{et}(\P^1 - D, \bar{x})$ to $G$, where $\bar{x}$ is a geometric basepoint on $\P^1 - D$. Because $|G|$ is prime to the characteristic of $k$,  this number of surjections
is actually independent of the choice of the fiber $s$, see \cite[Expos{\'e} XIII, Corollaire 2.12]{SGA1}.   So all geometric fibers of $\pi$ have the same cardinality, whence $\pi$ is finite by 
\cite[Lemme 1.19, II]{DR}.

\begin{rem}
 A discussion of the functor represented by $\PreHn_{G,n}$, and also a construction of a representing stack in the case when $G$ has nontrivial center,   
is given in Wewers' (unpublished) thesis~\cite{wewers}   (see also   \cite[Cor 2.2]{roma:wewersromagny}).
We do not need this for our purposes. 
\end{rem}

\subsection{} \label{hurnotn2} 
The scheme $\PreHn_{G,n}$ is not exactly the right one for our purposes.  First of all, we want to study $G$-covers of $\A^1$, not of $\P^1$.  To this end, let $\AConf_n$ be the closed subscheme of $\SConf_{n+1}$ cut out by $a_0 = 0$; that is, we force the associated divisor to contain $\infty$ (i.e., the point $X=1$, $W=0$).
   $\AConf_n$ also embeds  as an open subscheme of $\SConf_n$ via
   \begin{equation} \label{sqf} [a_0: \dots : a_{n+1}] \mapsto [a_1: \dots : a_{n+1}]\end{equation}
   identifying it with the  open subscheme where the first coordinate is nonzero.

We now define $\Hn_{G,n}$ to be the disjoint union of $\PreHn_{G,n+1} \times_{\SConf_{n+1}} \AConf_n$ and $\PreHn_{G,n} \times_{\SConf_n} \AConf_n$, where the maps from $\AConf_n$ are the closed inclusion (respectively, the open inclusion) described above.  This somewhat convoluted definition is necessary because in our topological definition of Hurwitz spaces, the branched cover may be either ramified at $\infty$ or not,  these two cases corresponding to the two fiber products above.  In the language of the first section of the paper, the second component parametrizes those branched covers whose boundary monodromy is trivial.

Suppose that  $c$ is a rational union of conjugacy classes in $G$.   (Recall that a union of conjugacy classes is called {\em rational} when  
$g \in c \implies g^N \in c$ for each $N$ relatively prime to the order of $G$.) 
 We say a tame $G$-cover $f: X \ra \P^1$ has monodromy of type $c$ if the image of a generator of tame inertia at each branch point of $f$ lies in $c$.  (Because $c$ is rational, the choice of generator doesn't matter.)  Then there is a closed and open subscheme  $\Hn_{G,n}^c \subset \Hn_{G,n}$ parametrizing tame $G$-covers with monodromy of type $c$;  in the proof of \cite[Theorem 4.11]{roma:wewersromagny}, this appears as a disjoint union of some subset of the collection of schemes denoted there as $\mathcal{H}'_\mu$.

\begin{lem} \label{lem:comparison}  {Suppose that $G$ is center-free.} 
The complex manifold $\Hn_{G,n}(\C)$ is homeomorphic to the topological space $\CHur_{G,n}/G$ of Definition  \ref{hur_defn}; 
{similarly,   $\Hn_{G,n}^c(\C)$  is homeomorphic to $\CHur_{G,n}^c/G$. }
\end{lem}

\begin{proof}    See \cite[Theorem 4.11(iii)]{roma:wewersromagny}.  We explicate the map in our case:

 A $\C$-point of $\Hn_{G,n}(\C)$ is by definition a point of $\AConf_n$ -- that is to say  a subset $S \subset \C$ of size $n$ -- 
 together with  a tame  $G$-cover of $\mathbb{P}^1_{\C}$ {branched either at $S$ or at $S \cup \{\infty\}$.}
  
By comparison of {\'e}tale and topological $\pi_1$ \cite[Expos{\'e} XII, Theorem 5.1]{SGA1},  
and also using the same reasoning  used to construct a bijection of (b) and (c) of \S \ref{interps}, 
to give such a tame $G$-cover is the same as giving
 a conjugacy class of surjections $\pi_1(\mathbb{A}^1(\C)-S, x_0) \twoheadrightarrow G$.  Here, $x_0$ is an arbitrarily chosen basepoint.

Fix, once and for all, a homeomorphism of the interior of $D$ with $\mathbb{A}^1(\C)$. This allows
us to identify any subset $S \subset \C$ as before with a subset $S' \subset D$ of size $n$,
i.e. it induces an identification $\AConf_n(\C) \simeq \Conf_n$. 
Moreover, we have an identification, canonical up to conjugacy,
$$ \pi_1(\mathbb{A}^1(\C)- S, x_0) \simeq \pi_1(D-S', *).$$ 
 
 In particular, there is a canonical identification 
of conjugacy classes of surjections $f: \pi_1(\mathbb{A}^1(\C)-S, x_0) \twoheadrightarrow G$
with conjugacy classes of surjections $g: \pi_1(D-S', *) \twoheadrightarrow G$. 
For such a surjection $g$, the pair $(S', g)$ defines, as in \S  \ref{interps},  a point of $\CHur_{G,n}/G$: the quotient by $G$ arises from the fact that we have only a conjugacy class of surjections.

  This discussion has constructed a continuous function $\Hn_{G,n}(\C) \rightarrow \CHur_{G,n}/G$, 
covering the homeomorphism $\AConf_n(\C) \simeq \Conf_n$  made above. Moreover, 
this function induces a bijection between fibers of $\Hn_{G,n}(\C) \rightarrow \AConf_n(\C)$
and fibers of $\CHur_{G,n}/G \rightarrow \Conf_n$. Therefore it is a homeomorphism.
The map ``with monodromy $c$'' is obtained by restriction. 
  \end{proof}

\subsection{} \label{hurnotn3} 
 
Our basic tool for comparing cohomology in characteristic $0$ and characteristic $p$ is a suitable compactification of configuration space.
We denote by $\SPConf_n / \Spec \Z$ the complement, in $\A^n$, of the divisors $\set{z_i = z_j}$  (where $1 \leq i < j \leq n$). 
   This is a hyperplane complement whose points over any   ring $A$ are the ordered $n$-tuples $(z_1, \ldots, z_n)$ of sections in $\A^1(A)$ which are disjoint; that is, which satisfy $z_i - z_j \in A^\times$ whenever $i \neq j$. There is a map from $\SPConf_n$ to $\AConf_n$ sending $(z_1,\ldots,z_n)$ to the polynomial   $\prod_i (X-z_i W)$.

\begin{lem}   For all $n \geq 2$, $\SPConf_n$ has a compactification $X_n$ which is a smooth and proper  scheme over $\Spec \Z$, and such that the complement $D_n = X_n \bs  \SPConf_n$ is a relative normal crossings divisor.  
\label{le:xn}
\end{lem}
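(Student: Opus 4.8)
The plan is to construct $X_n$ by a sequence of blow-ups, or more economically, to realize $U_n$ as a hyperplane-arrangement complement and invoke a wonderful-compactification-type construction. First I would observe that $U_n$ is the complement, inside $\A^n$, of the diagonals $\{x_i = x_j\}$; equivalently, $U_n$ is an open subvariety of the projective space $\P^n$ (after fixing a coordinate at infinity) or of $(\P^1)^n$. The cleanest approach is to take $\overline{U_n}$ to be the Fulton--MacPherson-style compactification of $n$ labelled points on $\P^1$ (or on $\A^1$), or, even more simply, the closure of $U_n$ inside a product of projective spaces obtained by blowing up the locus at infinity and the various diagonals in a suitable order. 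Since all the defining equations $x_i - x_j$ and the hyperplane at infinity have coefficients in $\Z$ (indeed in $\Z[1/|G|]$, which is all we need), the blow-up centers are regular and flat over $\Spec \baseR$, so the resulting $X_n$ is smooth and proper over $\Spec \baseR$ and the boundary $D_n = X_n \setminus U_n$ is a normal crossings divisor relative to $\Spec \baseR$.

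The key steps, in order, would be: (1) exhibit $U_n$ explicitly as the complement of a divisor with normal crossings over $\baseR$ \emph{after} a suitable blow-up --- concretely, start from $\P^n_\baseR$ (homogenize the coordinates on $\A^n$), so the boundary consists of the hyperplane $H_\infty$ at infinity together with the $\binom{n}{2}$ diagonal hyperplanes $H_{ij}$; (2) note these hyperplanes do \emph{not} cross normally (many of them meet along high-codimension strata), so perform the wonderful/De Concini--Procesi blow-up along the (flat, regular over $\baseR$) building set of intersections, in increasing order of dimension; (3) check that after this sequence of blow-ups the total transform of $\bigcup H_\infty \cup \bigcup H_{ij}$ becomes a relative normal crossings divisor $D_n$, and that the scheme $X_n$ so obtained is proper (a composite of projective blow-ups of $\P^n_\baseR$) and smooth over $\Spec \baseR$ (each center is smooth over $\Spec \baseR$ since its defining linear equations have unit or integer coefficients and the arrangement combinatorics is constant in the fibers, $|G|$ being inverted only to stay within $\baseR$ --- in fact no primes need to be inverted at all for this lemma, but stating it over $\baseR$ is harmless); (4) verify that the open part where all the exceptional divisors and strict transforms are removed is exactly $U_n$, i.e.\ the blow-up is an isomorphism over $U_n$.

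Alternatively, and perhaps more in the spirit of what is needed downstream, one can cite the Fulton--MacPherson compactification $X[n]$ of the configuration space of $n$ points on $\A^1$ (or $\P^1$): it is smooth and proper over the base of the configuration, the boundary is a normal crossings divisor, and Fulton--MacPherson's construction is carried out over $\Spec \Z$, hence base-changes to $\Spec \baseR$. Either route gives the lemma; I would present the hyperplane-arrangement blow-up since it is elementary and self-contained.

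The main obstacle --- really the only point requiring care --- is step (3): confirming that the iterated blow-up along the building set genuinely produces \emph{relative} (over $\Spec \baseR$) normal crossings, i.e.\ that nothing degenerates in special fibers. This is where one uses that the combinatorics of the diagonal arrangement in $\A^n$ is insensitive to the characteristic (the intersection lattice of the braid arrangement is the partition lattice, independent of the field), so the De Concini--Procesi description of the boundary strata of the wonderful model holds uniformly over $\Spec \Z$, and a fortiori over $\Spec \baseR$. Once that uniformity is in hand, smoothness and the normal-crossings property over $\Spec \baseR$ follow by the standard local computation for a blow-up along a smooth center meeting a normal crossings divisor transversally, applied inductively. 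I expect the write-up to consist mostly of setting up notation for the building set and then invoking these standard facts rather than any genuinely new argument.
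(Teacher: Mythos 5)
Your main construction takes a genuinely different route from the paper. The paper does not blow up anything: it observes that the map sending $(p_1,\ldots,p_n)$ to the pointed curve $(\P^1;\infty,p_1,\ldots,p_n)$ together with the affine map $\gamma$ with $\gamma(0)=p_1,\gamma(1)=p_2$ gives an isomorphism $U_n \cong \MM_{0,n+1}\times\Aff$, and then compactifies each factor separately: $\overline{\MM}_{0,n+1}$ (smooth proper over $\Spec\baseR$ with normal crossings boundary, by Knudsen) and $\P^1\times\P^1 \supset \G_m\times\G_a \cong \Aff$. Taking the product yields $X_n$ at once, with all the arithmetic input outsourced to Knudsen's theorem. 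Your route --- the maximal wonderful/De Concini--Procesi model of the projectivized braid arrangement together with the hyperplane at infinity --- is a legitimate alternative, and you correctly identify the one point that actually needs proof: that the iterated blow-up produces a scheme smooth over $\Spec\baseR$ with \emph{relative} normal crossings boundary, which rests on the centers being smooth over the base and the intersection combinatorics (the partition lattice, enlarged by $H_\infty$) being constant in every fiber. That verification is real work, since the standard wonderful-model references are stated over a field; you would either have to carry out the inductive local computation over $\Z$ yourself or find a relative reference. So your approach buys self-containedness at the cost of an honest induction, while the paper's buys brevity at the cost of citing $\overline{\MM}_{0,n+1}$.

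One caution about your parenthetical alternative: citing the Fulton--MacPherson compactification of configurations on $\A^1$ or $\P^1$ does not give the lemma as stated. FM requires the ambient variety to be proper, so $\A^1$ is out; and for $X=\P^1$ the complement of $U_n$ inside $X[n]$ includes the closures of the loci $\{x_i=\infty\}$, which are not part of the FM boundary and need not cross it normally. Already for $n=2$ one has $X[2]=\P^1\times\P^1$ (the diagonal is a divisor, so nothing is blown up), and at the point $(\infty,\infty)$ the three curves $\{x_1=\infty\}$, $\{x_2=\infty\}$ and the diagonal meet, which is not normal crossings on a surface. Extra blow-ups along the incidences with $\infty$ would be needed, at which point you are back to something like your primary construction (or the paper's, which handles $\infty$ by building it into the marking of $\MM_{0,n+1}$).
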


\begin{proof}  We remark first that the inclusion of  $\SPConf_n$ into $\P^n$ is a smooth proper compactification, but the complement is not normal crossings.

In the proof, we will make use of
the  moduli stack $\overline{\MM}_{0,n+1}$ of stable $(n+1)$-pointed genus-$0$ curves. 
It is a proper smooth scheme over $\Spec \Z$ and it contains the open subscheme $\MM_{0,n+1}$ corresponding  to nonsingular curves, and the complement
is a relative normal crossings divisor.   These facts are in
 Knudsen~\cite[Th 2.7]{knud:projectivityII}, except for the fact that $\overline{\MM}_{0,n+1}$ is a scheme. This follows from the identification (explained in the proof of {\em loc. cit.}, following from \cite[Prop 2.1]{knud:projectivityII}) of $\overline{\MM}_{0,n+1}$ with the universal curve over $\overline{\MM}_{0,n}$, together with the fact that $\overline{\MM}_{0,3} =\Spec(\Z)$ is a scheme.

There is a natural map from  $\SPConf_n \subset \A^n$ to $\MM_{0,{n+1}}$ sending a set of points $p_1, \ldots, p_n$ to 
$(\P^1; \infty, p_1, \dots, p_n)$, i.e. to the $n+1$-pointed curve defined by $\P^1$  together with the sections defined by $\infty, p_1, \ldots, p_n$.   

Let $\Aff$ be the group scheme of affine linear transformations of $\A^1$; that is, $\Aff$ is  the group scheme of upper triangular matrices $\left(\begin{array}{cc} A & B \\ 0 & 1 \end{array}\right)$,  corresponding to the transformation  $z \mapsto Az+B$.
 We can map  $\SPConf_n \subset \A^n$ to $\Aff$   via  %
 $(p_1, \ldots, p_n) \in \SPConf_n \mapsto (A, B) = (p_2-p_1, p_1)$. 
  
 The resulting (product) map $$F:  \SPConf_n \longrightarrow \MM_{0,n+1} \times \Aff$$ is an isomorphism:
For any base scheme $S$,  the $S$-points of $\SPConf_n$ are collections  $p_1, \dots, p_n \in \A^1(S) = \Gamma(S, \mathcal{O}_S)$, 
with the property that $p_i-p_j$ are units everywhere on $S$, and the map $F$ sends this to
$$(\P^1_S; \infty, p_1, p_2,  p_3, \dots, p_n) \in \MM_{0,n+1}(S) \times (p_2-p_1, p_1)  \in \Aff(S)$$ 
That this is a bijection follows from the fact that $\MM_{0,3} \simeq \Spec(\Z)$, which in particular  implies that if one fixes $u,v \in \Gamma(S, \mathcal{O}_S)$ with $u-v \in \Gamma(S, \mathcal{O}_S^{\times})$, 
any point  $\MM_{0,n+1}(S)$ is {\em uniquely representable}
by $(\P^1_S, \infty, u,v, q_3, \dots, q_n)$  for suitable sections $q_i$.

Since $\Aff$ is isomorphic, as scheme, to $\G_m \times \G_a$, it has a compactification by $\P^1 \times \P^1$ whose complement is normal crossings.  

So $X_n = \overline{{\MM}_{0,n+1}} \times \P^1 \times \P^1$ satisfies the requirements of the lemma.
\end{proof}

\begin{prop}
Let $A$ be a Henselian discrete valuation ring, whose quotient field has characteristic zero. Let $\bar{\eta}$ resp. $\bar{s}$ be a geometric generic (resp. special) point of $\Spec A$.   Let $X$ be a scheme proper and smooth over $\Spec A$, $D \subset X$ a reduced normal crossings divisor relative\footnote{Recall that this means that $D$ is  -- {\'e}tale locally on $X$ -- isomorphic to 
a union of coordinate hyperplanes in an affine space over $A$.} to $\Spec A$.

Let $U:= X - D$, and let $\pi: U' \ra U$ be a finite {\'e}tale cover. 
 Let $\GG$ be a finite group which acts compatibly on $U'$ and $U$, both actions
 covering the trivial $\GG$-action on $\Spec A$.

Then $H^i_{et}(U'_{\bar{\eta}},\Z/\ell \Z)$ and $H^i_{et}(U'_{\bar{s}},\Z/ \ell \Z)$ are isomorphic as $\GG$-modules for all $i$ and all primes $\ell$ invertible in $A$. 
\label{pr:logshit}
\end{prop}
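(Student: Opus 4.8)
\textbf{Proof plan for Proposition~\ref{pr:logshit}.}

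The plan is to reduce the comparison of cohomology on the generic and special fibers to the specialization theory for tame nearby cycles on a scheme with a normal crossings (log-smooth) situation. First I would compactify the finite {\'e}tale cover: since $U' \to U$ is finite {\'e}tale, take $X'$ to be the normalization of $X$ in $U'$. Because $D \subset X$ is a relative normal crossings divisor and $U' \to U$ is finite {\'e}tale, the pair $(X', D' := X' - U')$ carries a natural log structure for which $X' \to \Spec A$ (with the log structure pulled back from $X$, or rather the one making $D'$ the log boundary) is log smooth; the point is that $X'$ is, at worst, a tamely ramified cover branched along the components of $D$, so {\'e}tale-locally it looks like a Kummer cover of a coordinate-hyperplane configuration, which is still toric/log-smooth over $A$. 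The $\GG$-action extends to $X'$ by functoriality of normalization, covering the trivial action on $\Spec A$.

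Next I would invoke the comparison between the cohomology of $U'_{\bar\eta}$ and the log-{\'e}tale (Kummer {\'e}tale) cohomology of the special fiber. Concretely: log smoothness of $(X', \text{log str.})$ over $\Spec A$, together with the fact that $\ell$ is invertible in $A$, gives that the (tame) nearby cycles complex $R\Psi(\Z/\ell\Z)$ on $X'_{\bar s}$ is computed by the Kummer-{\'e}tale cohomology of the log special fiber; equivalently, by Nakayama's log smooth base change / log proper base change theorem (see Nakayama, ``Nearby cycles for log smooth families'', and Illusie's survey), $R\Gamma_{et}(U'_{\bar\eta}, \Z/\ell\Z) \cong R\Gamma_{ket}(X'_{\bar s}{}^{\log}, \Z/\ell\Z)$, and the right-hand side in turn equals $R\Gamma_{et}(U'_{\bar s}, \Z/\ell\Z)$ because $U'_{\bar s} = X'_{\bar s} - D'_{\bar s}$ and Kummer-{\'e}tale cohomology of the log smooth special fiber recovers ordinary {\'e}tale cohomology of the open part (the log structure on the special fiber "remembers" the boundary). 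All of these isomorphisms are functorial in the scheme with log structure, hence $\GG$-equivariant since $\GG$ acts on the whole diagram over the trivial base action. Chaining them gives $H^i_{et}(U'_{\bar\eta}, \Z/\ell\Z) \cong H^i_{et}(U'_{\bar s}, \Z/\ell\Z)$ as $\GG$-modules.

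An alternative, more hands-on route that avoids heavy log geometry: resolve/alter $X'$ so that $D'$ becomes a genuine relative normal crossings divisor in a smooth proper $X''/\Spec A$ (possible after a tame base change and blow-ups, using that the ramification is tame), apply the classical smooth and proper base change theorems to $X''$ and to the strata of $D''$, and then run the excision/Gysin spectral sequence relating $H^*_{et}(U'', -)$ to $H^*$ of $X''$ and of the strata of $D''$; each term specializes from $\bar\eta$ to $\bar s$ by proper-smooth base change, so the abutment does too, equivariantly. One must then check the base change $X'' \times_A \bar s$ vs. the original $U'_{\bar s}$ is harmless (it is, again by tameness and by the fact that alterations are isomorphisms over $U$).

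\textbf{Main obstacle.} The delicate point is the passage across the boundary: plain proper-smooth base change does not apply to $U'$ itself because it is not proper, and the cohomology can genuinely jump for a non-tame or badly-positioned boundary. Everything hinges on (i) $\ell$ being invertible in $A$ and (ii) the normalization $X'$ of $X$ in $U'$ being log smooth over $A$ — i.e., that the cover is tamely ramified along a normal-crossings boundary — so that the tame nearby-cycles formalism (or, equivalently, Abhyankar-type local structure plus Kummer-{\'e}tale cohomology) applies. Verifying this local log-smoothness of $X'$, and checking that all the comparison isomorphisms can be made $\GG$-equivariant (which is automatic from functoriality once the constructions are set up correctly), is where the real work lies; the rest is a formal chase of base-change and excision sequences.
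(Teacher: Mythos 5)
Your main route is essentially the paper's: normalize $X$ in $U'$, observe tameness along $D$ is automatic (the generic points of $D$ have characteristic-zero residue fields), endow $X'$ with a log structure making it log smooth over $\Spec A$, and invoke Nakayama's theorem for the proper log smooth map to get (local) constancy over the base; the local log-smoothness you propose to check by hand via Abhyankar/Kummer local models is exactly what the paper gets by citing the Fujiwara--Kato log purity theorem, so that step is a matter of reference rather than new work. The one place the routes genuinely diverge is the endgame: you want to identify the stalks of $R\pi_{\mathrm{ket},*}(\Z/\ell\Z)$ at $\bar\eta$ and $\bar s$ with $H^*_{\mathrm{ket}}$ of the log fibers and then with $H^*_{et}(U'_{\bar\eta})$ and $H^*_{et}(U'_{\bar s})$, which at $\bar s$ requires a Kummer-\'etale proper base change input in addition to the purity isomorphism $Rj_*\Z/\ell\Z\cong\Z/\ell\Z$ on the log regular fibers; this should be stated and referenced precisely, since base change in the Kummer-\'etale topology is not formal. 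The paper instead applies Illusie's purity theorem once on the total space $(X',M')$ to deduce that $R^i\pi'_*(\Z/\ell\Z)$ for $\pi'\colon U'\to\Spec A$ is locally constant, and then, since $U'$ and $\Spec A$ carry trivial log structures, drops back to the classical \'etale site: duality gives local constancy of $R^i\pi'_!(\Z/\ell\Z)$, base change for compactly supported cohomology (which holds unconditionally) compares $H^*_c$ of the two fibers $\GG$-equivariantly, and Poincar\'e duality on the smooth fibers finishes. That detour through compact supports is what lets the paper avoid any log-geometric base change theorem on the special fiber; your version is a bit more direct but carries that extra log-theoretic burden, and your "alternative hands-on route" via alterations is not needed once either chain is in place. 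In both cases $\GG$-equivariance is, as you say, automatic from functoriality.
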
 

\begin{proof}
 
  We are grateful to a referee for providing a sketch of this argument,  which substantially simplifies our original proof. 
    
  Let $\mathcal{F} = \pi_*(\Z/\ell)$. It is a locally constant sheaf of $\Z/\ell$-modules on $U$ and it is tamely ramified \cite[Expos{\'e} XIII, 2.3(c)]{SGA1} along $D$:
 automatic because the generic point of $D$ has characteristic zero.
 
 Let $K$ be the sheaf $j_! \mathcal{F}$ on $X$.  Then $H^i(X_{\bar{s}}, K) = H^i_c(U_{\bar{s}}, \mathcal{F}) = H^i_c(U'_{\bar{s}}, \Z/\ell \Z)$,
  and exactly the same assertion holds with $\bar{s}$ replaced by $\bar{\eta}$.   The specialization 
  map (as a reference, see \cite[Chapter III, \S 3]{FK}) $H^i_c(U'_{\bar{s}},\Z/\ell \Z) \rightarrow H^i_c(U'_{\bar{\eta}}, \Z/\ell \Z)$
  is   $\mathcal{G}$-equivariant, by the functoriality of the constructions of {\em loc. cit.}, and 
  is identified with the corresponding specialization map for $X$ and $K$.    We prove it is an isomorphism:

The specialization map 
between cohomologies of special and generic fiber  fits into a triangle involving vanishing cycles (see \cite[Expos{\'e} XIII, equation 2.1.8.9]{SGA7}): 
$$H^i(X_{\bar{s}},  K) \rightarrow H^i(X_{\bar{\eta}},K) \rightarrow H^i(X_{\bar{s}}, \mathbf{R} \Phi_{\bar{\eta}} K) \stackrel{[1]}{\rightarrow} $$
where $H^i$ denotes hypercohomology in case of a complex of sheaves. The final term {\em vanishes} by \cite[Expos{\'e} XIII, 2.1.11]{SGA7}, and we conclude that the specialization map furnishes an isomorphism
$ H^i_c(U'_{\bar{s}}, \Z/\ell \Z) \rightarrow H^i_c(U'_{\bar{\eta}}, \Z/\ell \Z)$.

Because both $U'_{\bar{s}}$ and $U'_{\bar{\eta}}$ are smooth varieties, 
  Poincar{\'e} duality converts \eqref{new-iso} to the desired $\mathcal{G}$-equivariant  isomorphism of  usual (not compactly supported) cohomology:
\begin{equation} \label{new-iso} H^{j}(U'_{\bar{s}}, \Z/\ell \Z)  \stackrel{\sim}{\leftarrow} H^{j}(U'_{\bar{\eta}}, \Z/\ell \Z).\end{equation} 
  \end{proof}

With Proposition~\ref{pr:logshit} in hand we can now prove the desired upper bound for the Betti numbers of Hurwitz spaces over finite fields.  

\begin{prop}  Suppose $(G,c)$ satisfies the non-splitting condition, where $G$ is center-free and $c$ is a rational conjugacy class generating $G$.
 Then there exists $C(G,c)$ depending only on $(G,c)$ so that
\begin{equation}
\dim  H^i_{et}(\Hn_{G,n}^c / \bar{\F}_q, \Q_{\ell}) \leq C(G,c)^{i+1}
\label{ConnectedLinearRange} 
\end{equation}
for all $i,n$, so long as $\ell > \max(|G|, q, n)$.  
\label{pr:fqstability}
\end{prop}
We note that the restriction on $\ell$ is irrelevant for the application to Cohen-Lenstra heuristics in \S\ref{sec:CL}. 
\begin{proof}  
 Write $\PHn_{G,n}^c$ for the Cartesian product $\Hn_{G,n}^c \times_{\AConf_n} \SPConf_n$. \label{PHNdef}

Applying Proposition~\ref{pr:logshit}, together with the comparison
of {\'e}tale and analytic cohomology \cite[Theorem 4.4, Expos{\'e} XI]{SGA4},  to $A = W(\F_q)$, $U =  \SPConf_n$, $X = X_n$ as in Lemma~\ref{le:xn}, $U' = \PHn_{G,n}^c$, and $\GG = S_n$  acting by permuting points on $\SPConf_n$, we find that
$$H^i(\PHn_{G,n}^c(\C) ,\Z/\ell \Z) \cong H^i_{et}(\PHn_{G,n}^c/\Fqbar, \Z/\ell \Z) \ \ \mbox{(iso of $S_n$-modules)}$$ %
If $\ell > n$, the $S_n$-invariants on the mod $\ell$ cohomology of $\PHn_{G,n}^c$ recovers
the  mod $\ell$ cohomology of $\Hn_{G,n}^c$. Thus, supposing $\ell > n$, we obtain 
an isomorphism 
\begin{equation} \label{rfs} H^i(\Hn_{G,n}^c (\C),\Z/\ell \Z) \cong H^i_{et}(\Hn_{G,n}^c/\Fqbar, \Z/\ell \Z).\end{equation}

 Now  Lemma  \ref{lem:comparison} and   Corollary~\ref{co:modgstability}, together with an application of duality to pass between
 homology and cohomology, give a stability property for the left-hand side; thus we get the corresponding property for the right-hand side too:  
 \begin{equation} \label{isoc} H^p_{et}(\Hn_{G,n}^c/\bar{\F}_q, \Z/\ell \Z)  \cong H^p_{et}(\Hn_{G,n+D}^c/\bar{\F}_q, \Z/\ell \Z). 
\end{equation} 
whenever $n > Ap + B$ and $\ell > \max(q, |G|, n+D)$; recall that the constant $D$ was introduced in Lemma \ref{le:rurfinite}.  
Now \eqref{ConnectedLinearRange} with $\Z / \ell \Z$ coefficients in place of $\Q_\ell$ follows from Proposition \ref{Bettibound}.

  The statement with $\Q_{\ell}$-coefficients follows: The dimension of cohomology with $\Q_{\ell}$-coefficients
is bounded above by the rank (i.e., number of generators) of cohomology with $\Z_{\ell}$-coefficients, and the latter is bounded
above by the dimension of $\Z/\ell \Z$-cohomology by the universal coefficient sequence. 
\end{proof}

\begin{rem}  The requirement that $c$ generates $G$ is technically unnecessary:  if $c$ fails to generate $G$, then $\Hn_{G,n}^c$ is empty.
\end{rem}

  Note that we have {\em not} shown that  the isomorphism implicit in \eqref{isoc} is equivariant for the action of Frobenius on source and target.  Doing so would allow us to show that $\delta^+(q) = \delta^-(q)$ in Theorem~\ref{thm:CLweak}, so that we could talk about limits rather than limits inferior and superior.
    The reason for the deficit is that our stabilization map $U$ is constructed in an essentially non-algebraic way.   
  Although this problem can perhaps be remedied (cf. \cite[\S 4]{hainlooijenga}
  for the corresponding issue in the case of moduli spaces of curves), 
  we have not pursued this course in the present paper.    As we record in Conjecture~\ref{co:vanishing}, we believe that apart from ``obvious classes" both sides of the isomorphism in Proposition~\ref{pr:fqstability} are $0$, making the Frobenius equivariance vacuous.
    
\label{re:nofrob}

\section{The Cohen-Lenstra heuristics} \label{sec:CL}

The {\em Cohen-Lenstra heuristics} are a family of conjectures proposed by the two named authors~\cite{cohe:cohenlenstra} concerning the distribution of class groups of quadratic number fields among all finite abelian groups.  In fact, the phrase nowadays incorporates an even broader family of conjectures, worked out by Cohen, Lenstra, and Martinet \cite{cohenmartinet}, about class groups of number fields of all degree, with conditions on Galois group, and so forth.  
They make sense over any global field.

In this section, we shall prove Theorem \ref{thm:CLweak}, which sheds
some light on the Cohen-Lenstra heuristics over rational function fields over finite fields. 

\subsection{} 
\label{clintro}

Let $\lgp$ be the set of isomorphism classes of finite abelian $\ell$-groups. 

The {\em Cohen-Lenstra distribution} is a probability distribution on $\lgp$:
the $\mu$-mass of the (isomorphism class of) $A$ equals $$  \prod_{i \geq 1} (1-\ell^{-i}) \cdot |\Aut(A)|^{-1}.$$

The measure $\mu$ can be alternately described (see \cite{FW}) as the
distribution of the cokernel of a random map $\Z_{\ell}^{N} \rightarrow \Z_{\ell}^N$ ({\em random} according to the additive Haar measure on the space of such maps), as $N \rightarrow \infty$. 
From this latter description, we see that the expected number of surjections from
a $\mu$-random group into a fixed abelian $\ell$-group $A_0$ equals $1$.

In fact, this last remark {\em characterizes} $\mu$. {Writing $\Sur(G_1, G_2)$  for the set of surjections from the  group $G_1$ to the group $G_2$, we have:}
\begin{lem}
 If $\nu$ is any probability measure on $\lgp$ for which the expected
number of surjections from a $\nu$-random group to $A_0$ always equals $1$ -- i.e.,
$$ \sum_{B \in \lgp} \nu(B) \cdot  |\Sur(B, A_0)| = 1$$
for all $A_0 \in \mathcal{L}$ -- 
then
$\nu = \mu$.
\label{le:probmeasuremu}  
\end{lem}
\proof Indeed, the assumption gives, for every abelian $\ell$-group $A$,
\begin{equation} \label{assu} |\Aut(A)| \cdot \nu(A) + \sum_{B \in \lgp, B \neq A} |\SHom(B,A)| \cdot \nu(B) = 1. \end{equation}
Here $|\SHom(B,A)|$ denotes the number of surjections from (a representative for) $B$
to $A$.

\eqref{assu} forces, first of all, $$|\Aut(A)| \cdot \nu(A) \leq 1.$$  Inserting this upper bound
back into \eqref{assu} , we obtain the {\em lower} bound:
$$|\Aut(A)| \cdot \nu(A) \geq 1 - \beta, $$
where  $\beta :=\sum_{B \neq A} \frac{|\SHom(B,A)|}{|\Aut(B)|}  = 
\left( \prod_{i\geq 1} (1-\ell^{-i})^{-1} - 1 \right)$, the latter equality from the fact that \eqref{assu} holds for $\nu=\mu$.

Proceeding in this fashion, we find that $\nu(A) \cdot |\Aut(A)|$ is bounded
above and below by alternating partial sums of the series
$ 1 -  \beta + \beta^2 - \dots $
and consequently $\nu(A) \cdot |\Aut(A)| = \frac{1}{1+\beta}$ for every $A$, as desired. 
 \qed 
 
 This result admits a more quantitative form. 
 If $\nu$ is a probability
 measure on $\mathcal{L}$, and $A \in \mathcal{L}$, write
 $\langle \Sur(-, A) \rangle_{\nu}$ for the expected 
 number of surjections from a $\nu$-random group to $A$.

   \begin{prop} \label{cor:kluners}
  Suppose given $\epsilon_0  > 0$ and a finite subset $L \subset \lgp$. Then there exists $\delta > 0$ and a finite subset $L' \subset \lgp$ such that, if $\nu$ is any probability measure on $\lgp$ for which $\langle \Sur(-, A) \rangle_{\nu} \in [1-\delta, 1+\delta]$, for any $A \in L'$, then also $|\nu(A) - \mu(A)| \leq \epsilon_0$
  for any $A \in L$.  
   \end{prop}
  
  The proof will require the following:
  \begin{lem} \label{enhomlem}
     Given $\epsilon  > 0$ and $A \in \mathcal{L}$, 
      there exists a constant $c(A)$ and a finite subset
   $M \subset \mathcal{L}$ so that, whenever $|X| > c(A)$, 
   $$|\Sur(X,A)| \leq \epsilon  \frac{ \sum_{A' \in M} |\Sur(X,A')|}{|M|}.$$
   \end{lem}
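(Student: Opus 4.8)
The plan is to take $M$ to be the set of isomorphism classes of subgroups of $A\times\Z/\ell^K$ for a suitably large $K=K(A,\epsilon)$, and to run everything off two elementary identities: multiplicativity, $|\Hom(X,A\times C)|=|\Hom(X,A)|\cdot|\Hom(X,C)|$, and the partition of homomorphisms by image, $|\Hom(X,B)|=\sum_{B'\le B}|\Sur(X,B')|$ (a sum over subgroups $B'\le B$ in which each term depends only on the isomorphism type of $B'$). Note that if $\Sur(X,A)=\emptyset$ the asserted inequality is trivial, so the content is just to bound $|\Sur(X,A)|$ above by something comparable to $|\Hom(X,A)|$, which is itself $\le|\Hom(X,A)|$ for free.

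First I would record, with $B=A\times\Z/\ell^K$ and $\sigma=\sigma(B)$ the number of subgroups of $B$, that since each isomorphism type of subgroup occurs at most $\sigma$ times in the partition identity we have $|\Hom(X,B)|\le\sigma\sum_{A'\in M}|\Sur(X,A')|$, while $|M|\le\sigma$; hence
\[
\frac{1}{|M|}\sum_{A'\in M}|\Sur(X,A')|\ \ge\ \frac{|\Hom(X,B)|}{\sigma^2}\ =\ \frac{|\Hom(X,A)|\,|\Hom(X,\Z/\ell^K)|}{\sigma^2}\ \ge\ \frac{|\Sur(X,A)|\,|\Hom(X,\Z/\ell^K)|}{\sigma^2}.
\]
So it suffices to arrange $|\Hom(X,\Z/\ell^K)|\ge\sigma^2/\epsilon$ once $|X|$ is large. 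Writing $X\cong\prod_i\Z/\ell^{a_i}$ one has $|\Hom(X,\Z/\ell^K)|=\ell^{\sum_i\min(a_i,K)}$, and the key elementary point is that if $\sum_i\min(a_i,K)<K$ then every positive $a_i$ is already $<K$ and there are fewer than $K$ of them, so $\sum_i a_i<K^2$ and $|X|<\ell^{K^2}$. Thus with $c(A):=\ell^{K^2}$ the hypothesis $|X|>c(A)$ forces $|\Hom(X,\Z/\ell^K)|\ge\ell^K$, and it remains only to pick $K$ with $\ell^K\ge\sigma(A\times\Z/\ell^K)^2/\epsilon$.

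This last choice is possible because the number of subgroups of $A\times\Z/\ell^K$ grows only polynomially in $K$: by Goursat's lemma a subgroup of $A\times\Z/\ell^K$ amounts to a nested pair of subgroups on each factor together with an isomorphism of the two subquotients; the factor $\Z/\ell^K$ supplies only $O(K^2)$ nested pairs, and since the matching subquotient of $A$ is cyclic of order bounded in terms of $A$ there are only $O_A(1)$ isomorphisms and $O_A(1)$ nested pairs on the $A$ side, so $\sigma(A\times\Z/\ell^K)=O_A(K^2)$, which is eventually dwarfed by $\ell^K$. The only step demanding any care is precisely this polynomial bound on the subgroup count (equivalently, the existence of a workable $K$); the rest is formal, and assembling the displayed chain with $K$, $c(A)=\ell^{K^2}$, and $M$ as above yields $|\Sur(X,A)|\le\epsilon\,|M|^{-1}\sum_{A'\in M}|\Sur(X,A')|$ for all $X$ with $|X|>c(A)$, as required.
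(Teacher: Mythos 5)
Your proof is correct, and it takes a genuinely different route from the paper's. The paper's own argument (only sketched there as a ``direct computation'') chooses $M$ to be the set of isomorphism classes $A'$ with $|A'| = \ell^s|A|$ admitting a surjection onto $A$, with $c(A) = \ell^{s-1}|A|$; the implicit computation is that once $|X| \geq \ell^s|A|$, each surjection $X \to A$ factors through many quotients of $X$ of order $\ell^s|A|$, so the average of $|\Sur(X,A')|$ over this $M$ dominates $|\Sur(X,A)|$ for $s$ large. You instead take $M$ to be the isomorphism classes of subgroups of $B = A\times\Z/\ell^K$ and reduce everything to two exact identities --- multiplicativity of $\Hom$ over the product and the partition $|\Hom(X,B)| = \sum_{B' \leq B}|\Sur(X,B')|$ --- together with the Goursat-type estimate $\sigma(A\times\Z/\ell^K) = O_A(K^2)$, which is the one point needing care and which you handle correctly. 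I checked the chain of inequalities, the computation $|\Hom(X,\Z/\ell^K)| = \ell^{\sum_i\min(a_i,K)}$, and the choice of $K$ (the circularity of $\sigma$ depending on $K$ is harmless since $\ell^K$ beats any polynomial in $K$); all of it is sound. What each approach buys: yours is fully self-contained and avoids the factorization count the paper leaves to the reader, at the cost of a less economical $M$ and a cruder threshold --- in fact your own estimate can be sharpened, since if every $a_i < K$ then $\sum_i \min(a_i,K) = \sum_i a_i < K$, so $c(A) = \ell^{K}$ already suffices in place of $\ell^{K^2}$; the paper's choice gives a leaner $M$ (every member surjects onto $A$) and a much smaller $c(A)$, but only by way of the omitted counting argument. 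Since the corollary that consumes this lemma uses nothing beyond the stated inequality, either choice of $M$ serves equally well.
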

   
  \begin{proof} 
  
     Call an {\em enlargement} of $A$  any group $A'$
 that admits a surjection $A' \twoheadrightarrow A$ with kernel of size $\ell$. 
We claim that for any abelian $\ell$-group $X$
surjecting onto $A$, with $|X| > |A|$,   there exists an enlargement $A'$ such that 
\begin{equation} \label{enhom} |\Sur(X, A')| \geq (\ell-1)  |\Sur(X, A)| .\end{equation}

Certainly there exists an enlargement $A'$ such that
$\Sur(X, A')$ is nonempty (take a suitable quotient of $X$). Fix such an $A'$, and fix a surjection $\pi: A' \twoheadrightarrow A$ and $f \in \Sur(X, A)$. 
We examine  lifts $\tilde{f}:X \rightarrow A'$ of $f$   (with respect to $\pi$). Then \eqref{enhom} follows from the fact that the number of such {\em surjective} lifts $\tilde{f}$ is at least $\ell-1$: 

\begin{itemize}
\item[(i)] 
If $A'$ is not isomorphic to $A \times \Z/\ell\Z$, any such lift $\tilde{f}$ is surjective, 
and the set of lifts
is a principal homogeneous space under $\Hom(X, \Z/\ell \Z)$. 

\item[(ii)] If $A'$ is isomorphic to $A \times \Z/\ell\Z$:
  Note that $\ker(f)$ cannot be contained in $\ell X$; if it were, 
 then $f$ induces an isomorphism $X/\ell \rightarrow A/\ell$, 
 but then the $\ell$-rank of $X$ and $A$ coincide, and 
 then $\Sur(X, A')$ would be empty, contradicting our choice of $A'$. 
 
 Thus there exists a homomorphism
 $\varphi: X \rightarrow \Z/\ell \Z$ that is nontrivial on $\ker(f)$,
 and $(f, \varphi)$ gives a surjection $X \rightarrow A'$ that lifts $f$.
 Since there are at least $(\ell-1)$ choices for $\varphi$, there
 are at least $\ell-1$ surjective lifts of $f$. 
 \end{itemize}  

We now iterate \eqref{enhom}: 
    
   Call an $s$-enlargement of $A$
   any group $A'$ that admits a map $A' \twoheadrightarrow A$ with kernel of size $\ell^s$. 
   Then, for any abelian $\ell$-group $X$ surjecting onto $A$ and of size larger than $\ell^s |A|$,
   we see that there exists an $s$-enlargement $A'$ so that
   $$ |\Sur(X, A')| \geq (\ell-1)^s |\Sur(X, A)| .$$
   On the other hand, the number of (isomorphism classes of) $s$-enlargements is bounded
   by the number of partitions of $s+m$, where $\ell^m  = |A|$.   Since
   $p(s+m) (\ell-1)^{-s} \rightarrow 0$ as $s \rightarrow \infty$, the statement
   of the Lemma follows, taking $M$ to be the set of $s$-enlargements of $A$ and $c(A) = \ell^s |A|$.
  \end{proof}

  We will now prove Proposition \ref{cor:kluners}. 
Recall that we say a sequence of measures $\nu_k$ on $\mathcal{L}$
  weakly converges to a limit $\nu_{\infty}$ if 
one has convergence of integrals $\int f \nu_k \rightarrow \int f \nu_{\infty}$ for each compactly supported continuous function;    for measures on the discrete space $\mathcal{L}$,  this is equivalent to asking that 
  $\nu_k(A) \rightarrow \nu_{\infty}(A)$, for every $A \in \mathcal{L}$.   Any sequence $\nu_k$ has 
   (by a diagonal argument) a weakly convergent subsequence.  However, the limit need not be a probability measure; it may assign $\mathcal{L}$ a mass that is strictly less than $1$. 
  \proof

  Let $L'_k$ be the subset of $\mathcal{L}$
  comprising groups with $|A| \leq k$. 
If the assertion were false, there is some measure $\nu_k$ that ``does not work''
for $L' = L'_k, \delta = 1/k$, that is to say:
  \begin{enumerate}
  \item $ | \langle  \Sur(-, A)_{\nu_k}  \rangle -1 | \leq 1/k$ for all  $A \in L'_k$;
  \item $|\nu_k(A) - \mu(A)| > \epsilon_0$ for some $A \in  L$. 
  \end{enumerate} 
Passing to a weakly convergent subsequence, 
we obtain measures $\nu_k$ having the following property:
\begin{equation} \label{Contrad} \lim_{k \rightarrow \infty} \langle \Sur(-, A) \rangle_{\nu_k}=1, \end{equation} 
for every fixed $A \in \mathcal{L}$, but $\nu_k$ weakly converge to a measure
$\nu_{\infty} \neq \mu$.  We will deduce a contradiction.

Fix $\epsilon > 0$ arbitrary.  This is not related to $\epsilon_0$ in the statement above: Indeed, we will apply Lemma~\ref{enhomlem} with this value of $\epsilon$,
and then let $\epsilon$ approach zero at the end of the argument.

We claim $\langle \Sur(-, A) \rangle_{\nu_{\infty}} = 1$:
 indeed, this expectation is $\leq 1$ by Fatou's lemma; on the other hand, with $c = c(A)$
 as in the statement of Lemma \ref{enhomlem}, 
\begin{eqnarray} \nonumber 
\langle \Sur(-, A) \rangle_{\nu_{\infty}} & = &   \sum_{|B| \leq c} \nu_{\infty}(B) |\Sur(B,A)|  +
 \sum_{|B| > c} \nu_{\infty}(B) |\Sur(B, A)|  \\  \nonumber
& \geq &  \lim_{k}  \sum_{|B|  \leq c} \nu_{k}(B) |\Sur(B,A)|    \\ \label{added_ref}
 & = &  1 - \lim_{k} \sum_{|B| > c} \nu_{k}(B) |\Sur(B,A)|  
 \end{eqnarray}
 
By Lemma~\ref{enhomlem}  \begin{equation} \label{added_ref_2}
\sum_{|B| > c} \nu_{k}(B) |\Sur(B,A)| \leq \epsilon |M|^{-1} \sum_{|B|>c,A' \in M} \nu_k(B) |\Sur(B,A')|
\end{equation} 
Now, by assumption, for any $A' \in M$ and any $k > |A'|$, 
\beq
\sum_{|B| > c} \nu_k(B) |\Sur(B,A')| \leq \langle  \Sur(-, A')  \rangle_{\nu_k} \leq 1+1/k
\eeq
and using \eqref{added_ref_2} and passing to the limit, we get  
\beq
\limsup_{k}  \sum_{|B| > c} \nu_{k}(B) |\Sur(B,A)| \leq \epsilon. 
\eeq
Thus  by \eqref{added_ref} and the prior discussion, we get 
$\langle \Sur(-, A) \rangle_{\nu_{\infty}}  \in [1-\epsilon, 1]$. Since $\epsilon$ was arbitrary, 
 \beq
\langle \Sur(-, A) \rangle_{\nu_{\infty}} = 1.
\eeq

Applying this conclusion with $A$ trivial, we see that $\nu_{\infty}$ is a probability measure; now Lemma~\ref{le:probmeasuremu} shows $\nu_{\infty} = \mu$, a contradiction. 
 \qed 
   
\subsection{}

The Cohen-Lenstra heuristics in the simplest case -- as formulated in \cite{cohe:cohenlenstra} -- assert that, for $\ell \neq 2$, the $\ell$-part of class groups of imaginary quadratic extensions of $\mathbb{Q}$ -- when ordered by discriminant -- approach $\mu$ in distribution.  
Precisely: amongst the set $S_X$ of imaginary quadratic fields of discriminant less than $X$, 
the fraction for which the $\ell$-part of the class group is isomorphic to $A$
approaches $\mu(A)$, as $X \rightarrow \infty$. 

In view of what we have just proved, this is equivalent to the validity of the following assertion for all abelian $\ell$-groups $A$:
the {\em average number of surjections} from the class group of a varying quadratic field to $A$
equals $1$.   Explicitly, 
\begin{equation} \label{cl2} 
\lim_{X \ra \infty} \frac{ \sum_{K \in S_X} \left| \Sur({\rm Cl}_K, A) \right| }{|S_X|}  = 1.\end{equation}

In the formulation \eqref{cl2}, there
are results for specific $A$:
\eqref{cl2} is true for $A = \Z/3\Z$ by work of Davenport and Heilbronn; the corresponding
assertion is even known over an arbitrary global field by work of Datskovsky and Wright~\cite{dats:dawr}; and a natural variant for $A = \Z/4\Z$ is a theorem of Fouvry and Kl\"{u}ners~\cite{fouv:4rank}.  

 If we replace $\mathbb{Q}$ by $\mathbb{F}_q(t)$, there is a fair amount of work (\cite{Achter, Washington}) 
 on the different problem (with no obvious analog over a number field) in which we
 fix the discriminant degree and take a limit as $q \ra \infty$.

  \subsection{}
  Let $\F_q$ be a finite field, and let $K = \F_q(t)$.  Let $\ell$ be an odd prime not dividing $q$,
  let $A$ be a nontrivial finite abelian $\ell$-group, and define
  $$ G := A \rtimes (\Z/2\Z),$$ where  the nontrivial element of $\Z/2\Z$ acts on $A$ by inversion.   Let $c$ be the conjugacy class in $G$ consisting of all involutions.  Then $c$ generates $G$.  Moreover, the pair $(G,c)$ is non-splitting by Lemma~\ref{le:dihedralnon-splitting}. 
     For brevity, we write simply $\mathbf{X}_n$ for the Hurwitz scheme
$\Hn_{G,n}^c \times_{\Spec(R)} \Spec \mathbb{F}_q$ that parameterizes  (\S \ref{hurnotn}, \S \ref{hurnotn2}) tame
$G$-covers of the affine line,  branched at $n$ points,  all of whose ramification is of type $c$.

\begin{prop}  Let $n$ be an odd integer.
  There is a bijection between $\mathbf{X}_n(\F_q)$ and the set of isomorphism classes of pairs $(L,\alpha)$, where $L$ is a quadratic extension  of $K$ of discriminant degree $n+1$ ramified at $\infty$, and $\alpha$ is a surjective homomorphism
\beq
\alpha: \Cl_L \ra A.
\eeq
Here, $\Cl_L$ is the class group of $\OO_L$, the integral closure of $\F_q[t]$ inside $L$.    Two pairs $(L, \alpha), (L', \alpha')$ are isomorphic if there exists
a $K$-isomorphism $f: L \rightarrow L'$ with $f^* \alpha' = \alpha$.

\label{pr:hypjac}
\end{prop}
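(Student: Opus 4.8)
The plan is to unwind the definitions on both sides and match them via Galois theory, using the standard dictionary between $G$-covers of $\A^1$ and pairs consisting of a quadratic extension together with an $A$-torsor over the associated hyperelliptic curve. Let me set up notation: a point of $\mathbf{X}_n(\F_q) = \CHn_n^c(\F_q)$ is (the isomorphism class of) a connected $G$-cover $Y \to \A^1_{\F_q}$, all of whose ramification is of type $c$, with branch divisor of degree $n$, defined over $\F_q$. Since $G = A \rtimes \Z/2\Z$, the quotient $Y / A$ is a connected double cover $Z \to \A^1_{\F_q}$; because the ramification of $Y \to \A^1$ is everywhere of type $c$ (the involutions), $Z \to \A^1$ is ramified exactly over the $n$ branch points, and $Y \to Z$ is an $A$-cover étale away from... wait, one must check carefully where $Y \to Z$ ramifies. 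The key local computation: an element of $c$ has the form $(a, -1)$ with $a \in A$; its square is $(a \cdot (-a), 1) = (0,1)$, so the inertia at each branch point is exactly $\Z/2\Z$ sitting in $G$, meaning $Y \to Z$ is étale over all of $Z$ and ramification in $Y \to \A^1$ comes entirely from the double cover $Z \to \A^1$. So $Y \to Z$ is an unramified (connected) $A$-cover of the smooth affine curve $Z$. First I would record this local analysis precisely, as it is the geometric heart of the bijection.

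Next I would pass to function fields and compactify. The double cover $Z \to \A^1_{\F_q}$ corresponds to a quadratic extension $L/K = \F_q(t)$; its branch divisor on $\A^1$ has degree $n$, and since $n$ is odd, $L/K$ must additionally be ramified at $\infty$ (the total number of ramified places of a quadratic extension of a rational function field in odd characteristic... actually in any characteristic not $2$ the ramification divisor of the hyperelliptic curve $C_L$ has even degree, namely the degree of the discriminant; so branch degree $n$ on $\A^1$ with $n$ odd forces ramification at $\infty$, and the discriminant degree of $L/K$ is $n+1$). This matches the statement. Conversely such an $L$ gives $Z = C_L \setminus \{\text{points over } \infty\}$, an affine curve, and I would note $Z$ is the complement in $C_L$ of the single ramified place above $\infty$. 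Then I need: connected unramified $A$-covers of $Z$ defined over $\F_q$, equivariantly for nothing extra, correspond to surjections $\pi_1^{\text{\'et}}(Z_{\F_q}) \twoheadrightarrow A$ — but I want covers over $\F_q$, i.e. I should be careful about whether the relevant fundamental group is geometric or arithmetic.

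Here is the main obstacle, and where I would spend the most care: identifying unramified $A$-covers of $Z$ over $\F_q$ with surjections $\Jac(C_L)(\F_q) \to A$, rather than with surjections from some fundamental group of $Z$ over $\overline{\F_q}$ or over $\F_q$. Since $A$ is abelian, an unramified $A$-cover of $C_L$ over $\overline{\F_q}$ is classified by $\Hom(\Jac(C_L)(\overline{\F_q})[\text{tors}], A) = \Hom(\Jac(C_L)(\overline{\F_q}), A)$ via the Abel–Jacobi map (geometric class field theory / the theory of the Jacobian), since $A$ is finite. An unramified cover of $Z$ (not $C_L$) allows additional ramification only at the missing point, but since $A$ has odd order and the local extension at that ramified-above-$\infty$ place is already "used up" by the quadratic twist, one checks the cover extends to all of $C_L$; alternatively, covers of $Z$ tamely ramified only at one point of a curve with a single such point still come from $\Jac$. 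The passage from $\overline{\F_q}$ to $\F_q$: a cover defined over $\F_q$ corresponds to a $\Gal(\overline{\F_q}/\F_q)$-equivariant surjection $\Jac(C_L)(\overline{\F_q}) \to A$ (with $A$ having trivial Galois action, as the cover is a $G$-cover over $\F_q$ and $G$ contains $A$ with its action already fixed), i.e. a surjection that factors through the Frobenius-coinvariants; and the Lang isogeny / the exact sequence $0 \to \Jac(C_L)(\F_q) \to \Jac(C_L)(\overline{\F_q}) \xrightarrow{\mathrm{Frob}-1} \Jac(C_L)(\overline{\F_q}) \to 0$ identifies $\Hom(\Jac(C_L)(\overline{\F_q}), A)^{\mathrm{Frob}}$ with $\Hom(\Jac(C_L)(\F_q), A)$. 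Finally I would verify that connectedness of $Y$ corresponds exactly to surjectivity of $\alpha$ (a subcover corresponds to a proper subgroup of $A$ through which $\alpha$ factors), and that the two notions of isomorphism — $G$-cover isomorphism over $\A^1_{\F_q}$ on the Hurwitz-scheme side, and $K$-isomorphism $f: L \to L'$ with $f^*\alpha' = \alpha$ on the other side — match up, the point being that the $\Z/2\Z$-quotient recovers $L$ canonically and automorphisms of the $G$-cover covering a given automorphism of $\A^1$ fixing the branch data are exactly elements of $A = Z(G) \cdot (\ldots)$... (here one uses $G$ center-free, so $\Aut$ of the $G$-cover as a $G$-cover is trivial, and the bijection is "on the nose"). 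Assembling these identifications gives the claimed bijection; I expect the only genuinely delicate points to be the ramification bookkeeping at $\infty$ and the $\overline{\F_q}$-versus-$\F_q$ descent via Lang's theorem.
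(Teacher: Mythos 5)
Your overall skeleton coincides with the paper's argument: pass to the quotient by $A$, use the local computation that elements of $c$ are involutions (so $(a,-1)^2=1$ and all ramification of $Y\to\A^1$ comes from the double cover), use parity to force ramification at $\infty$ and to get discriminant degree $n+1$, and then invoke class field theory. The genuine gap is in the class-field-theory/descent step, and it is not merely cosmetic. First, the groups you write down are wrong: $\Jac(C_L)(\Fqbar)$ is a divisible torsion group, so $\Hom(\Jac(C_L)(\Fqbar),A)=0$ for finite $A$, and by Lang's theorem $\Frob-1$ is surjective on $\Jac(C_L)(\Fqbar)$, so its Frobenius-coinvariants vanish; hence neither ``geometric covers are classified by $\Hom(\Jac(C_L)(\Fqbar),A)$'' nor the identification of $\Hom(\Jac(C_L)(\Fqbar),A)^{\Frob}$ with $\Hom(\Jac(C_L)(\F_q),A)$ via the Lang sequence can be correct. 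The geometric classification is by $\Hom(\pi_1(C_{L,\Fqbar}),A)\cong\Hom(\Jac[\ell^k](\Fqbar),A)$ (homs from bounded-level torsion), not from the full group of points.

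Second, and more seriously, even after this correction your asserted bijection ``\'etale $A$-covers of $C_L$ over $\F_q$ $\leftrightarrow$ surjections $\Jac(C_L)(\F_q)\to A$'' is false: by unramified class field theory $H^1(C_L,A)\cong\Hom(\widehat{\Pic}(C_L),A)\cong A\times\Hom(\Jac(C_L)(\F_q),A)$ (equivalently, the Hochschild--Serre sequence contributes $H^1(\F_q,A)\cong A$ worth of constant-field twists), so the natural map to $\Hom(\Jac(C_L)(\F_q),A)$ is $|A|$-to-one, not a bijection, and surjectivity of $\alpha$ corresponds to \emph{geometric} connectedness rather than to connectedness over $\F_q$ (the image of the geometric $\pi_1$ in $\widehat{\Pic}(C_L)$ under reciprocity is exactly $\Pic^0=\Jac(C_L)(\F_q)$). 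What eliminates the factor $|A|$ is precisely the $G$-structure at infinity, which your sketch treats as a routine automorphism check: an involution $\sigma$ lifting the hyperelliptic involution $\iota$ and conjugating $A$ by $-1$ exists if and only if the classifying homomorphism $\phi$ on $\widehat{\Pic}(C_L)$ satisfies $\phi(\iota_*x)=-\phi(x)$; since $\iota$ fixes the class of the Weierstrass point $w_\infty$ over $\infty$, this forces $2\phi([w_\infty])=0$, hence $\phi([w_\infty])=0$ as $|A|$ is odd, i.e.\ the $A$-cover must split completely over $w_\infty$, and such covers do biject with $\Hom(\Jac(C_L)(\F_q),A)$. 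This is the bookkeeping the paper performs (in the direction $(L,\alpha)\mapsto$ cover, by lifting the hyperelliptic involution after applying class field theory), and it is exactly the point your proposal is missing; as written, your construction would overcount $\mathbf{X}_n(\F_q)$ and would not establish the stated bijection.
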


  Note that $(L, \alpha)$ and $(L, \beta)$ are isomorphic if and only if $\alpha, \beta$
are interchanged by the automorphism of $L/K$, i.e. (see discussion below) if and only if $\beta = \pm \alpha$. 
 
Note that, under the assumptions, $\F_q$ is automatically algebraically closed inside $L$. 
If we denote by $C_L$ the smooth proper curve over $\F_q$ associated to $L$, 
then $\Cl_L$ is identified with the group of degree zero $\F_q$-rational divisors on $C_L$ up to
equivalence:  an ideal of $\OO_L$ defines a divisor  $D_I$ on $C_L$, 
and then $I \mapsto D_I - \mathrm{deg}(D_I). \infty$ descends to the desired isomorphism, 
where $\infty$ denotes the unique closed point of $C_L$ above $\infty$ on $\P^1$.  Also   $\Cl_L = \Cl(\OO_L)$ is   isomorphic to  the $\F_q$-points of the  Jacobian of $C_L$ (see e.g. \cite[Theorem C, (ii)]{Rosen}).  

\begin{proof} 
  
 For $L$ a quadratic extension of $K$ as in the proposition statement, we let 
  $\sigma$ be the nontrivial automorphism of $L$ over $K$, and let 
$J_L$ be the Galois group of the maximal abelian everywhere unramified extension  $E/L$
with pro-$\ell$ Galois group.  

Note that $E/K$ is Galois, with $\Gal(E/L) \simeq J_L$ as a normal subgroup. 
The subgroup $\langle x + \sigma(x) : x \in J_L \rangle$
is a normal subgroup of $\Gal(E/K)$. Let $F_L$ be the fixed field of this subgroup. 
  The extension $F_L/K$ is also Galois,  and its Galois group fits in an extension
\beq
1 \ra J_L' \ra \Gal(F_L/K) \ra \Gal(L/K) \ra 1
\eeq
where $J_L'$ is the quotient of $J_L$ by all elements $x + \sigma(x)$ with $x \in J_L$, i.e.
$J_L'$ is the largest quotient group of $J_L$ on which $\sigma$ acts by $-1$.
 
 Since $\Gal(L/K) \cong \Z/2\Z$ and $\ell \neq 2$, this sequence splits as a semidirect product
\begin{equation} \label{flk}
\Gal(F_L/K) = J_L' \rtimes \langle \tau \rangle
\end{equation} 
where $\tau$  {is any involution in $\Gal(F_L/K)$. } %

Class field theory yields a short exact sequence: 
\begin{equation} \label{geo-cft}  (\Cl_L)_{\ell} \hookrightarrow J_L  \twoheadrightarrow \widehat{\Z}_{\ell}\end{equation} 
where we have written $ (\Cl_L)_{\ell} $ for the Sylow $\ell$-subgroup
of $\Cl_L$.     Now $\sigma$ acts compatibly on all terms of \eqref{geo-cft}; the action on $\widehat{\Z}_{\ell}$ is trivial 
and its action on   $\Cl_L$ is by negation:  for any fractional ideal $I$ of $\OO_L$,
the product $I \cdot \sigma(I)$ is the extension of an ideal from $K$, and thus principal.
 
 Thus, the sequence \eqref{geo-cft} induces, via the snake lemma, a canonical isomorphism
between $J_L / (1 + \sigma) J_L$ and $(\Cl_L)_{\ell}/ (1+\sigma)  (\Cl_L)_{\ell}$ or, in other words,
\begin{equation} \label{geo-cft-2} J_L' \stackrel{\sim}{\rightarrow}  (\Cl_L)_{\ell}.\end{equation}

Therefore, beginning with $(L, \alpha)$ as in the statement of the theorem, 
we obtain by composing $\alpha$ with \eqref{geo-cft-2}  a surjection $f_{\alpha}: J_L' \twoheadrightarrow  A$;
by \eqref{flk} this extends to $g_{\alpha}: \Gal(F_L/K) \ra G$, by sending $\tau$ to any involution in $G$.
Since all involutions in $G$ are conjugate under $A$, the extension $g_{\alpha}$ is unique up to $A$-conjugacy.

Let $F_{\alpha}$ be the fixed field of $\ker(g_{\alpha})$; it is a Galois extension of $K$, equipped
with an isomorphism $\Gal(F_{\alpha}/K) \simeq G$ that is defined up to $A$-conjugacy.  { Moreover, $\F_q$ is algebraically closed inside $F_{\alpha}$. }
  To say the same geometrically: given $(L, \alpha)$ we have associated
  a  geometrically connected curve $Y_{\alpha}$ (namely, the curve associated to $F_{\alpha}$)   together with a map $Y_\alpha \ra \P^1$ and an isomorphism $g_{\alpha}:  \Aut(Y/\mathbb{P}^1) \rightarrow G$.

 The ramified  { places} of $F_{\alpha}/K$ are the same as that of $L/K$, because $F_{\alpha}/L$ is everywhere unramified. 
Finally, any  inertia group $I_v$ above such a ramified  place $v$  satisfies $g_{\alpha}(I_v) = \langle g \rangle$ for some $g \in c$:
indeed, $g_{\alpha}(I_v)$ is generated by a single element of $G$ which maps to the nontrivial element of $\Z/2\Z$. 

Although $g_{\alpha}$ is only well-defined up to $A$-conjugacy,
the  isomorphism class of the $G$-cover of $\mathbb{P}^1$  defined by $(Y_{\alpha}, g_{\alpha})$
{\em does not change} if we conjugate $g_{\alpha}$ by $A$.  In other words, we have defined a tame $G$-cover of $\P^1/\F_q$ branched at $n$ points of $\A^1$,  and its ramification is all of type $c$. By the description of the Hurwitz scheme in ~\S\ref{hurnotn}, this is equivalent to a point of $\mathbf{X}_n(\F_q)$.
  
Therefore our discussion yields a map 
\begin{equation} \label{gro-gro} \{ (L, \alpha) \} \mbox{ up to isomorphism} \longrightarrow \mathbf{X}_n(\F_q).\end{equation} 
 
 This map is  bijective: Let $X \rightarrow \mathbb{P}^1$ be a tame $G$-cover of $\P^1$, branched at $n$ points of $\mathbb{A}^1$. The quotient map $X \rightarrow X/A$ is etale
above $\mathbb{A}^1$ -- this follows from a local computation, using the fact  that the ramification is of type $c$.  Therefore, the degree $2$ quotient map $X/A \rightarrow \mathbb{P}^1$
is ramified at an odd number of points of $\mathbb{A}^1$, and must therefore
also be ramified at $\infty$. 
The monodromy of $X \ra \P^1$ above $\infty$ is then a cyclic subgroup that projects surjectively on $\Z/2\Z$; such a subgroup must be of order $2$, and so $X \rightarrow X/A$ is etale everywhere.  
 Thus we get an inverse to \eqref{gro-gro} by  sending this tame $G$-cover to  the pair $(L, \alpha)$, where $L$ is the function field of $X/A$ and $\alpha: \Cl_L \rightarrow A$ is the map
 arising from applying class field theory to the {\'e}tale cover $X \rightarrow X/A$.
 \end{proof}
Let $\Quads_n$ be the set of quadratic extensions of $K$ of the form $L = K(\sqrt{f(t)})$, where $f(t)$ is a  squarefree polynomial of odd degree $n$.  
  To exhaust $\Quads_n$, it is sufficient to let $f$ range through a set of representatives for squarefree polynomials
 up to the multiplication action of $(\mathbf{F}_q^*)^2$. The number of {\em monic} squarefree polynomials
 of degree $n$ with coefficients in $\mathbf{F}_q$ is equal \cite{Carlitz} to $q^n - q^{n-1}$, from where we deduce   \begin{equation} \label{Ihavebigquads} |\Quads_n| = 2 (q^n-q^{n-1}). \end{equation}

In what follows, we will average over fields in $\Quads_n$ and let $n \rightarrow \infty$.  
This is the analog of ``dyadic averages'' in analytic number theory, and we find it to be  aesthetically preferable in the case of a function field. 
However, it is easy to deduce similar results for averages  over sets such as  $\coprod_{m \leq n} \Quads_m$;
for example,  from Theorem \ref{th:weakcl} below, one immediately deduces the corresponding statement  
with $\Quads_n$ replaced by $\coprod_{m \leq n} \Quads_m$.

 Write $m_A(L) = |\Sur(\Cl(\OO_L), A)|$. 
Then -- in view of Proposition \ref{cor:kluners} -- the following theorem implies
Theorem \ref{thm:CLweak}.

\begin{thm}
\label{th:weakcl}
Let $\ell$ be an odd prime not dividing $q$ or $q-1$, and $A$ an $\ell$-group. 
There is a constant $B(A)$ such that
\begin{equation} \label{theorem-statement} 
\left| \frac{\sum_{L \in \Quads_n} m_A(L)}{|\Quads_n|}- 1 \right| \leq B(A)/\sqrt{q}
\end{equation}
for all $n,q$ with $\sqrt{q} > B(A)$ and $n$ an odd integer greater than $B(A)$.
\end{thm}

Here is the explicit argument that this implies Theorem \ref{thm:CLweak}:    
  Let $A_0$ be any fixed abelian $\ell$-group and let $\epsilon  > 0$. 
For a given $n$, let $\nu_n$ be the probability measure\footnote{We don't know that the limit $\lim_n \nu_n$ is a probability measure, but we are not using that.} on $\mathcal{L}$
with $\nu_n(A) $ equal to the fraction of $L \in \Quads_n$ with $\Cl(\OO_L) \simeq A$. 

Apply Proposition \ref{cor:kluners} to the measure $\nu_n$ and with $L  = \{A_0\}$;
the Corollary gives ``as output'' a finite list $L'$ of abelian $\ell$-groups and $\delta > 0$
with the property that 
$$  \mbox{ if   $\left| \frac{\sum_{L \in \Quads_n} m_A(L)}{|\Quads_n|}- 1 \right|  < \delta$ for all $A \in L'$ then $|\nu_n(A_0) - \mu(A_0)| < \epsilon$.  }$$
Now, notation  as in Theorem \ref{th:weakcl}, let $Q$ be chosen so that $Q > B(A)$ and 
$B(A)/\sqrt{Q} < \delta$ for every $A \in L'$;
we see that if $q > Q$ and $n   >Q$ we have
$   \left| \nu_n(A_0) -\mu(A_0) \right| \leq \epsilon.$
Thus, for any $q > Q$,   the upper and lower densities discussed in Theorem \ref{thm:CLweak} 
are both bounded between $\mu(A_0) - \epsilon$ and $\mu(A_0) + \epsilon$. 
Since $\epsilon$ was arbitrary, the result follows. 
 
\begin{proof}  
 
In the proof that follows, we use $H^i$ to denote $i$th {\'e}tale cohomology and $H^i_c$
to denote the corresponding compactly supported cohomology group. 
 
 Note that if $A$ is the trivial group, the left hand side of \eqref{theorem-statement} is zero.  
It  is enough to treat the case that $A$ is nontrivial.

By Proposition~\ref{pr:hypjac},  and the fact that $(L,\alpha)$ and $(L,\beta)$ are isomorphic if and only if $\alpha = \pm \beta$, we know that
\beq
  \sum_{L \in \Quads_n} m_A(L) =2  |\mathbf{X}_n(\F_q)|
\eeq
and as noted in \eqref{Ihavebigquads}, we have $|\Quads_n| = 2 (q^{n} - q^{n-1})$.
It will suffice, then, to show that
\begin{equation} \label{desid}  \left|  \frac{  |\mathbf{X}_n(\F_q)|}{q^n} - 1 \right|  \leq \frac{B(A)}{\sqrt{q}}.\end{equation} 
when $n$ and $q$ are sufficiently large relative to $A$.

Denote by $\bar{\mathbf{X}}_n$ the base change of $\mathbf{X}_n$ to $\bar{\F}_q$,  i.e. $\bar{\mathbf{X}}_n =   \Hn_{G,n}^c \times_{\Spec(R)} \Spec ( \overline{\mathbb{F}_q})$.   Fix a sufficiently large prime $\ell$. By \eqref{ConnectedLinearRange} and 
Poincar{\'e} duality for the smooth $n$-dimensional variety $\bar{\mathbf{X}}_n$ we get
 the existence of a constant $C(A)$  such that
\beq
\dim H^{2n-i}_c(\bar{\mathbf{X}}_n, \Q_{\ell}) = \dim H^i(\bar{\mathbf{X}}_n;\Q_\ell) \leq C(A)^{i}
\eeq
for all $i>0$ -- just take $C(A) = C(G,c)^2$ in the notation of \eqref{ConnectedLinearRange}.

  Deligne has proven \cite{DeligneWeil} 
that every eigenvalue of the geometric Frobenius  $\Frob_q$ (i.e., 
if we fix a projective embedding of $\mathbf{X}_n$ over $\F_q$,  
this is the operation which 
raises coordinates to the $q$th power) on {\em compactly supported} $H^j_c$ of a smooth  variety  %
is bounded above, in absolute value, by $q^{j/2}$.   Consequently, 
\begin{eqnarray*} 
\nonumber \left| q^{-n} \sum_{j < 2n} (-1)^j \Tr \left(  \Frob_q | H^j_{c}(\bar{\mathbf{X}}_n;\Q_\ell)  \right)  \right| & \leq & 
\nonumber q^{-n} \sum_{j=0}^{2n-1} q^{j/2} \dim H^j_{c}(\bar{\mathbf{X}}_n;\Q_\ell) \\ 
& \leq & \nonumber q^{-n} \sum_{j=0}^{2n-1}  C(A)^{2n-j} q^{j/2}   \\ & \leq &  \nonumber 
\sum_{k=1}^{\infty} \left( \frac{C(A)}{\sqrt{q}}  \right)^k. \\
\end{eqnarray*}
The last quantity above is at most $2\frac{C(A)}{\sqrt{q}}$ as long as $\frac{C(A)}{\sqrt{q}} \leq 1/2$; in other words, taking $B(A)$ to be $2C(A)$, we have 
\begin{equation}
\left| q^{-n} \sum_{j < 2n} (-1)^j \Tr \left(  \Frob_q | H^j_{c}(\bar{\mathbf{X}}_n;\Q_\ell)  \right) \right| \leq  \frac{ B(A)}{ \sqrt{q}}
\label{moo}
\end{equation}
whenever $\sqrt{q} > B(A)$.

We now claim that, for sufficiently large $n$ (this notion depending only on $A$) 
\beq
\Tr  \left( \Frob_q  | H^{2n}_c(\bar{\mathbf{X}}_n;\Q_\ell)  \right) = q^n
\eeq
By Poincar{\'e} duality, this is equivalent to the statement  that there is exactly one $\F_q$-rational connected component of $\bar{\mathbf{X}}_n$.
This, together with the Lefschetz trace formula \eqref{Gro-Lef}
and the bound \eqref{moo},  will imply \eqref{desid}. 
 
Let $\eta$ be the generic point of $\AConf_n \times_{\Spec R} \Spec \F_q$, and write $K$ for the function field of $\eta$.  Then the finite \'{e}tale cover  
$\mathbf{X}_n \ra \AConf_n \times_{\Spec R} \Spec \F_q$ is determined by its geometric generic fiber $\Sigma$ together with the action of $\Gal(\bar{K}/K)$ on that fiber.  The latter group sits in a sequence
\beq
\Gal(\bar{K}/\overline{\F_q} K) \ra \Gal(\bar{K}/K) \ra \Gal(\overline{\F_q}/\F_q)
\eeq

Then the desired conclusion -- that there is exactly one $\F_q$-rational   connected component of $\bar{\mathbf{X}}_n$ -- is precisely the statement that only one $\Gal(\bar{K}/\overline{\F_q} K)$-orbit on $\Sigma$ is preserved by the action of $\Gal(\overline{\F_q}/\F_q)$.

We prove this by expressing $\Sigma$ in a different way, allowing us to make contact with the existing literature on monodromy in families of hyperelliptic curves.
  
  Recall (\S \ref{hurnotn2})  the definition of $\AConf_n$ as the subscheme of $\SConf_{n+1}$ with $a_0 = 0$.    
    Let $C$
 be the smooth hyperelliptic curve over $K$ birational to the plane curve
 $$Y^2  =  a_1 X^{n} + a_2 X^{n-1} W +  \dots +  a_{n+1} W^n.$$
Choose $k$ sufficiently large so that $\ell^k A = 0$.     Let $V$ be the   $\ell^k$-torsion   points of the Jacobian $\Jac(C)$  over $\bar{K}$. 
Then $V  \simeq (\Z/\ell^k\Z)^{2g}$ (with $g = \lfloor \frac{n-1}{2} \rfloor$, which, under our standing hypothesis that $n$ is odd, equals $\frac{n-1}{2}$)  and we have a  {\em monodromy homomorphism} \beq
\mu:  \Gal(\bar{K}/K)  \ra \mathrm{Aut}(V). \eeq

Now consider the set $\Sur(V,A)$ of surjective homomorphisms from $V$ to $A$. This set carries a natural action of $\Gal(\bar{K}/K)$ derived from $\mu$.

Since $\AConf_n$ is a moduli scheme for degree-$n$ squarefree divisors on $\A^1$, there is a universal such divisor on $\A^1 / \AConf_n$, which restricts to a canonical degree-$n$ squarefree   (i.e., reduced) divisor $D$ on $\A^1/\bar{K}$.  The set $\mathbf{X}_n(\bar{K})$ of tame $G$-covers of $\A^1/\bar{K}$ branched at $D$ (which is to say $\Sigma$)  is naturally identified by the argument of Proposition~\ref{pr:hypjac} with $\Sur(V,A)$, equivariantly for the action of $\Gal(\bar{K}/K)$ on both sides.  (In the proof of Proposition~\ref{pr:hypjac}, we replace the statement of class field theory, which enters at \eqref{geo-cft}, with   the fact (see e.g. \cite[(2.4)]{KL}; in the case at hand, this is just Kummer theory) that  the abelian {\'e}tale extensions of $C / \bar{K}$ 
with Galois group $A$ are classified by  surjections $\Jac(C)[\ell^k] (\bar{K}) \twoheadrightarrow A$.)  %
 
It thus suffices to show  that only one $\Gal(\bar{K}/\overline{\F_q}K)$-orbit on $\Sur(V,A)$ is preserved by the action of $\Gal(\overline{\F_q}/\F_q)$ (again, for $n$ large enough). 
 
The action of $\Gal(\bar{K}/\overline{\F_q}K)$ on $V$ preserves the Weil pairing $V \times V \rightarrow \Z/{\ell^k}\Z(1)$, which
we write as $\langle v_1, v_2 \rangle$ for $v_1, v_2 \in V$. 
For $m \in (\Z/\ell^k)^{\times}$, write $\GSp_m(V)$  for all automorphisms $\alpha \in \Aut(V)$ that satisfy $\langle \alpha(v_1), \alpha(v_2) \rangle= m    \langle v_1, v_2 \rangle $;  write $\Sp(V) = \GSp_1(V)$ for automorphisms preserving $\langle -, - \rangle$,
and $\GSp(V) $ for $\bigcup_m \GSp_m(V)$.  Thus 
$$ \mu(\Gal(\bar{K}/K)) \subset \GSp(V) \mbox{ and } \mu(\Gal(\bar{K}/\overline{\F_q} K)) \subset \Sp(V).$$
Moreover, if $F$ is an element of $\Gal(\bar{K}/K)$ lying over Frobenius in $\Gal(\overline{\F_q}/\F_q)$, then $\mu(F)$   lies in $\GSp_q(V)$.
 
  Jiu-Kang Yu has proved~\cite{jkyu:monodromy} that $\mu(\Gal(\bar{K}/\overline{\F_q} K))=  \Sp(V)$ for large enough $g$ in this case (again using that $\ell \neq 2$; for $\ell = 2$ the monodromy group is in fact smaller.)\footnote{In fact, he proved this as part of a program to study the Cohen-Lenstra conjecture over function fields, just as we do; his theorem on monodromy allows him to prove a result in the $q \ra \infty$ limit as alluded to above.}  For other proofs of Yu's (unpublished) result, see Achter--Pries~\cite[Theorem 3.4]{achterpries} and Hall~\cite[Theorem 4.1]{cjh}.  
This ``big monodromy" theorem simplifies the situation considerably:
The geometric components of $\bar{\mathbf{X}}_n / \Fqbar$ are  therefore in bijection with $\Sp(V)$-orbits on $\Sur(V,A)$, and an orbit  $O$ is defined over $\F_q$ if and only if the stabilizer in $\GSp(V)$ of 
  some $x \in O$ (equivalently: every $x \in O$)   has nontrivial intersection with $\GSp_q(V)$.

  We claim that, for sufficiently large $n$, there is a {\em unique} $\Sp(V)$-orbit on $\Sur(V,A)$ defined over $\F_q$. 
    This  can be reduced to a corresponding ``linear algebra'' statement with $V$ replaced by $\Z_{\ell}^{2g}$ as follows: 
Write  $T$ for the full Tate module of $\Jac(C)$, so that $T/\ell^k \simeq V$.   Because we chose $k$ so that $\ell^k A = 0$, 
the pullback under $T \rightarrow V$ identifies $\Sur(V, A) \stackrel{\sim}{\rightarrow} \Sur(T, A)$.  By smoothness, 
the map $\GSp_q(T) \rightarrow \GSp_q(V)$ is surjective. 
    Therefore, our desired conclusion follows from the subsequent Lemma. 
   \end{proof}

 \begin{lem}   \label{sp-la}  
 {  Let $V$   be a finite free $\Z_{\ell}$-module of rank $2g$, equipped with a  perfect symplectic pairing $\omega :V \times V \rightarrow \Z_{\ell}$.}
 Let $A$ be a finite abelian $\ell$-group and $q \in \Z_{\ell}^{\times}$ be such  that $q-1$ is invertible in $\Z_{\ell}$.  Define
 $O$ as the set of all surjections $V \rightarrow A$ whose stabilizer, inside $\GSp(V)$, intersects $\GSp_q(V)$ nontrivially: 
\begin{equation} \label{oder} O=  \{ \mbox{$f : V \rightarrow A$ surjective, and there exists $h \in \GSp_q(V)$  with $f \circ h= f$} \} \end{equation}  
   Then, for $g$ sufficiently large, $O$ is nonempty, and $\Sp(V)$ acts transitively on $O$. 
  \end{lem}

\begin{proof}

 We shall use the following four facts, all of which remain valid for any finite rank free $\Z_{\ell}$-module  $M$ with a nondegenerate symplectic form 
(``nondegenerate'' means that the symplectic form induces an isomorphism $M \rightarrow \Hom_{\Z_{\ell}}(M, \Z_{\ell})$):

 \begin{itemize}
 \item[(i)] 
  Any two maximal isotropic $\Z_{\ell}$-submodules of $V$ are conjugate to one another under $\Sp(V)$.

 \item[(ii)] If a direct sum decomposition $V = V_1 \oplus V_2$ is orthogonal for $\omega$, the restriction $\omega|V_j$ is nondegenerate for $j=1,2$.
 
  \item[(iii)] $V$ admits a decomposition $V_+ \oplus V_-$, where both $V_+, V_-$ are maximal isotropic. 
 
   \item[(iv)] Given a decomposition $V = A \oplus B$, where both $A, B$ are isotropic for $\omega$, 
 then $A,B$ are maximal isotropic.

 \end{itemize} 

For (i) one can argue by 
extending a free $\Z_{\ell}$-basis $\{x_1, \dots, x_g\}$ for a maximal isotropic $\Z_{\ell}$-submodule
to a standard symplectic basis for $V$: by nondegeneracy, choose a basis $y_j$ with $\langle x_i, y_j \rangle = \delta_{ij}$, 
and then successively modify $y_j$ by a combination $\sum_{i \leq j} a_i x_i$ so that $\langle y_i, y_j \rangle = 0$. 
For (ii), we note that a ``degenerate vector'' in $V_1$, i.e. a vector $v_1 \in V_1$ that satisfies
$\langle v_1, w \rangle \in \ell \Z_{\ell}$ for all $w \in V_1$, would also be degenerate when considered as a vector in $V_1 \oplus V_2$. 
  (iii) is immediate from  Corollary 3.5  of  \cite[Chapter 1]{MH}.  (iv) follows from the corresponding fact for symplectic forms over fields.

 To ensure that $O$ is nonempty when $g$ is sufficiently large, write $V$ as the direct sum of two maximal isotropic subspaces $V_+ \oplus V_-$.  The automorphism of $V$ which acts as $q$ on $V_+$ and $1$ on $V_-$ lies in $\GSp_q(V)$, and fixes any surjection from $V$ to $A$ factoring through projection to $V_-$.  Such a surjection exists as long as $g \geq \dim_{\F_\ell} A/\ell A$.

  It remains to verify that $\Sp(V)$ acts transitively on $O$.

Take  $f \in O$; there exists $h \in \GSp_q(V)$ so that the image of $h-1$ is contained in the kernel of $f$.

 Let $V_1$ (resp. $V_q$) be the sum of generalized eigenspaces of $h$ on $V$
  for all eigenvalues  
that  reduce to $1$ (resp. $q$) in $\mathbb{\bar{F}}_{\ell}$.  
By this, we mean more precisely the following: set $\overline{V} = V \otimes \overline{\Q_{\ell}}$,
 and then set
 $$V_1 = V \cap \bigoplus_{|\lambda - 1|<1} \overline{V}_{\lambda},$$
where $\overline{V}_{\lambda}$ is the generalized $\lambda$-eigenspace; $V_q$ is defined similarly.
  Equivalently,
$V_1$ (resp. $V_q$) consists of all $v \in V$ for which $(h-1)^n v \rightarrow 0$ (resp. $(h-q)^n v \rightarrow 0$) as $n \rightarrow \infty$.  
 
Let $W$ be the sum 
of all other generalized eigenspaces of $h$ on $V$, i.e.  for all eigenvalues $\lambda$ that satisfy $|(\lambda-1)(\lambda-q)|=1$; in other words,
\begin{equation} \label{dsumdecomp}
W = \cap_{n=1}^\infty (h-1)^n (h-q)^n V.
\end{equation} 

 Then, since $q$ and $1$ are distinct in ${\mathbb{F}_{\ell}}$, we have  
\begin{equation} \label{dsumdecomp2} V = V_1 \oplus V_q \oplus W.\end{equation}
 Indeed, given $v \in V$, we can certainly write $v = v_1 + v_q + w$ where $v_1, v_q, w$ lie (respectively) in the $\Q_{\ell}$-spans
of $V_1, V_q, W$. By applying a large power of $(h-1) (h-q)$ we deduce that $(h-1)^n (h-q)^n w \in W$
for sufficiently large $n$.  But $(h-1) (h-q)$ is  easily seen to be invertible on $W$, so in fact $w \in W$. 
It follows then that $v_1 +v_q \in V$, and proceeding similarly we see $v_1 \in V_1, v_q \in V_q$, yielding  \eqref{dsumdecomp2}. 

 Moreover, $V_1 \oplus V_q$ is orthogonal to $W$, and both $V_1$ and $V_q$ are isotropic: 
to see this, let $x$ be an element of $V_1$ and $y$ an element of $V_1 \oplus W$.  Then $(h-1)^n x$ approaches $0$ as $n \ra \infty$, while for all $n$ there exists $z_n \in V$ such that $y = (h-q)^n z_n$.  Now
\begin{eqnarray*}
\omega(x,y) & = & \omega(x,h(h-q)^{n-1} z_n) - \omega(x,q(h-q)^{n-1}z_n) \\
& = & \omega(x,h(h-q)^{n-1} z_n) - \omega(hx,h(h-q)^{n-1}z_n) \\
& = & \omega((1-h)x,(h-q)^{n-1}x_n), \mbox{ with $x_n = h z_n$.}  \\
\end{eqnarray*}
Iterating, we see that $\omega(x,y)$ lies in $\omega((h-1)^n x, V)$; this being the case for all $n$, we have $\omega(x,y) = 0$.  The proof that $V_q$ is orthogonal to $V_q \oplus W$ is exactly the same.

By (ii) $W$ is nondegenerate. By (iii) we can express $W = W_{+} \oplus W_{-}$
as the sum of two isotropic submodules; since
$V = (W_+ \oplus V_1) \oplus (W_{-} \oplus V_q)$
and both summands are isotropic, they are by (iv) both maximal isotropic. 
In particular, $W_- \oplus V_q$ is a  maximal isotropic submodule of 
$V$,   which furthermore lies in the image of $h-1$, and thus belongs to $\mathrm{ker}(f)$.

Now fix a decomposition $V = V_+ \oplus V_-$ into isotropic submodules, both free of rank $g$ over $\Z_{\ell}$. Modifying $f$ by an element of $\Sp(V)$, we may assume  by (i) that $f$ factors through 
the projection
$V \longrightarrow V_+$.
Since every automorphism of $\GL(V_+)$ is induced by an element
of $\Sp(V)$, we are reduced to checking that any two surjections
$V_+ \rightarrow A$ are conjugate under $\GL(V_+)$.

We must show that any two surjections $f_1, f_2: \Z_{\ell}^g \rightarrow A$ are conjugate under $\GL_g(\Z_{\ell})$.
 Fix $x_1, \dots, x_k \in A$ such that the classes of $x_i$  form a basis for $A/\ell A$ as a $\Z/\ell$-vector space.  Then $x_1, \dots, x_k$ generate $A$. 
Lift $x_1, \dots, x_k$ to $y_1, \dots, y_k \in \Z_{\ell}^g$. 
The $y_i$ are linearly independent modulo $\ell$, and so 
we can extend $y_1, \dots, y_k$ to a   $\Z_{\ell}$-basis
$y_1, \dots,  y_g$ for $\Z_{\ell}^g$  where
$f(y_i) = 0$ for $i > k$: simply extend arbitrarily to a basis, and then modify the $y_i$'s for $i > g$ by linear combinations
of $y_1, \dots, y_k$ to ensure that $f(y_i)=0$ for $i  > k$. 
Similarly lift $x_1, \dots, x_k$ via $f_2$ to $y_1', \dots, y_g'$;   the automorphism of $\Z_{\ell}^g$
carrying $y_i$ to $y_i'$ then carries $f_1$ to $f_2$. 
\end{proof}

\bibliography{evw}

\bibliographystyle{plain}

\end{document}